\begin{document}

\title{Accelerated Decentralized Constraint-Coupled Optimization: A Dual$^2$ Approach}

\author{Jingwang Li and Vincent Lau, \IEEEmembership{Fellow, IEEE}
  \thanks{This work was supported by the Research Grants Council under the Areas of Excellence scheme grants AoE/E-601/22-R and AoE/E-101/23-N. (\textit{Corresponding author: Vincent Lau.})}
  \thanks{The authors are with the Department of Electronic and Computer Engineering, The Hong Kong University of Science and Technology, Hong Kong 99077, China (e-mail: jingwangli@outlook.com; eeknlau@ust.hk).}
}

\maketitle

\begin{abstract}
  In this paper, we focus on a class of decentralized constraint-coupled optimization problem: $\min_{x_i \in \mathbb{R}^{d_i}, i \in \mathcal{I}; y \in \mathbb{R}^p}$ $\sum_{i=1}^n\left(f_i(x_i) + g_i(x_i)\right) + h(y) \ \text{s.t.} \ \sum_{i=1}^{n}A_ix_i = y$, over an undirected and connected network of $n$ agents. Here, $f_i$, $g_i$, and $A_i$ represent private information of agent $i \in \mathcal{I} = \{1, \cdots, n\}$, while $h$ is public for all agents. Building on a novel dual$^2$ approach, we develop two accelerated algorithms to solve this problem: the inexact Dual$^2$ Accelerated (iD2A) gradient method and the Multi-consensus inexact Dual$^2$ Accelerated (MiD2A) gradient method. We demonstrate that both iD2A and MiD2A can guarantee asymptotic convergence under a milder condition on $h$ compared to existing algorithms. Furthermore, under additional assumptions, we establish linear convergence rates and derive significantly lower communication and computational complexity bounds than those of existing algorithms. Several numerical experiments validate our theoretical analysis and demonstrate the practical superiority of the proposed algorithms.
\end{abstract}

\begin{IEEEkeywords}
  Decentralized optimization, coupled constraints, dual$^2$ approach, accelerated algorithms, inexact methods, linear convergence.
\end{IEEEkeywords}

\IEEEpeerreviewmaketitle

\section{Introduction}
In this paper, we focus on the following decentralized optimization problem:
\begin{equation} \label{main_pro} \tag{P1}
  \begin{aligned}
    \min_{x_i \in \R^{d_i}, i \in \sI; y \in \R^p} \  & \sum_{i=1}^n\pa{f_i(x_i) + g_i(x_i)} + h(y) \\
    \text{s.t.} \                                     & \sum_{i=1}^{n}A_ix_i = y,
  \end{aligned}
\end{equation}
where $f_i: \R^{d_i} \rightarrow \R$ is a smooth and convex function, $g_i:\R^{d_i} \rightarrow \exs$ and $h:\R^p \rightarrow \exs$ are convex but possibly nonsmooth functions, and $A_i \in \R^{p \times d_i}$, $i \in \sI$. Without loss of generality, we assume that the (optimal) solution of \cref{main_pro} exists. We should mention that, \cref{main_pro} may appear in some literature \cite{alghunaim2021dual,li2024npga} in another equivalent form:
\begin{equation} \label{main_pro2}
  \begin{aligned}
    \min_{x_i \in \R^{d_i}, i \in \sI} \ \sum_{i=1}^n \pa{f_i(x_i) + g_i(x_i)} + h\pa{\sum_{i=1}^{n}A_ix_i}.
  \end{aligned}
\end{equation}
Under the decentralized setting, we are required to solve \cref{main_pro} over an undirected and connected network of $n$ agents, where $f_i$, $g_i$, and $A_i$ are private information of agent $i \in \sI$ and $h$ is public for all agents. The network topology is represented by an undirected graph $\mathcal{G}=(\sI, \mathcal{E})$, where $\sI = \{1,\cdots, n\}$ and $\mathcal{E} \subset \sI \times \sI$ denote the sets of nodes and edges, respectively. Specifically, $(i, j) \in \mathcal{E}$ if agents $i$ and $j$ can communicate with each other. Since the decision variables of all agents are coupled by an equality constraint, \cref{main_pro} is referred to as a decentralized/distributed\footnote{In control community, ``decentralized'' and ``distributed'' are often interchangeable, both describing systems without a central coordinator. However, in fields like computer science, ``distributed'' may still imply centralized coordination. To avoid ambiguity, this paper consistently uses ``decentralized''.} constraint-coupled optimization problem in many references \cite{falsone2020tracking,li2022implicit,li2024npga}. \cref{main_pro} has extensive applications, including decentralized resource allocation \cite{doostmohammadian2023distributed}, decentralized model predictive control \cite{yfantis2024distributed}, and decentralized learning \cite{liu2024vertical}; see \cref{appendix_applications} for further details. The primary goal of this work is to develop accelerated decentralized algorithms that provide both theoretical and experimental guarantees for solving \cref{main_pro} under mild conditions, leveraging Nesterov's acceleration technique \cite{nesterov2018lectures}.

\begin{table*}[t!]
  \begin{threeparttable}[b]
    \caption{Conditions required by different algorithms to guarantee convergence\tnote{1}.}
    \label{2024}
    \scriptsize
    \tabcolsep=10pt
    \renewcommand{\arraystretch}{0.5}
    \centering
    \begin{tabularx}{\textwidth}{cCCCC}
      \toprule
                                                                               & $f_i$        & $g_i$                          & $h$               \\
      \midrule
      \\
      IDC-ADMM \cite{chang2014multi}                                           & C1A\tnote{2} & C1B\tnote{3}                   & $\iota_{\set{b}}$ \\
      \midrule
      Tracking-ADMM \cite{falsone2020tracking}, NECPD \cite{su2021distributed} & C2\tnote{4}  & $\iota_{\sX_i}$, $\sX_i$ is CP & $\iota_{\set{b}}$ \\
      \midrule
      Proj-IDEA \cite{li2022implicit}                                          & C3\tnote{5}  & $\iota_{\sX_i}$, $\sX_i$ is CC & $\iota_{\set{b}}$ \\
      \midrule
      DCDA \cite{alghunaim2019proximal}, NPGA \cite{li2023proximal}            & SC, SM       & CPC                            & $\iota_{\set{b}}$ \\
      \midrule
      \rowcolor{bgcolor}
      Proposed Algorithms: iD2A, MiD2A\tnote{6}                                                     & SC, SM       & CPC                            & CPC               \\
      \bottomrule
    \end{tabularx}
    \begin{tablenotes}
      \item C: convex; SC: strongly convex; SM: smooth; CPC: closed proper convex; CC: closed and convex; CP: closed and compact.
      \item [1] All algorithms presented below exhibit asymptotic convergence, except for NECPD which achieves a sublinear convergence rate of $\bO{\frac{1}{k}}$. It should be noted, however, that NECPD is a two-loop algorithm (when the closed-form solution of the subproblem for each iteration is unavailable), and the convergence rate applies specifically to its outer loop.
      \item [2] C1A: $f_i(x_i) = f'_i(E_ix_i)$, where $f'_i: \R^p \rightarrow \R$ is strongly convex and smooth, and $E_i \in \R^{p \times m}$ with $m = d_1 = \cdots = d_n$. This condition implies that all the $x_i$ and all the $A_i$ has the same dimension.
      \item [3] C1B: $g_i$ is CPC and $\partial g_i(x_i) \neq \emptyset$ for any $x_i \in \dom{g_i}$.
      \item [4] C2: $\dom{f_i} = \R^{d_i}$ and $f_i$ is convex.
      \item [5] C3: $\dom{f_i} = \R^{d_i}$, $f_i$ is convex on $\sX_i$ and differentiable on $\tilde{\sX}_i$, and $\nabla f_i$ is locally Lipschitz on $\tilde{\sX}_i$, where $\tilde{\sX}_i$ is an open set containing $\sX_i$.
      \item [6] \textbf{Throughout this paper, we use color in tables to distinguish our algorithms from existing ones.}
    \end{tablenotes}
  \end{threeparttable}
\end{table*}

\begin{table*}[t]
  \begin{threeparttable}[b]
  \caption{Communication and oracle complexities of various algorithms across different scenarios (full version available in \cref{appendix_full_table}).}
  \label{tab:complexity_h}
  \renewcommand{\arraystretch}{0.5}
  \centering
  \scriptsize
    \begin{tabularx}{\textwidth}{>{\centering\arraybackslash}p{3cm}ccC}
      \toprule
                                                            & \makecell{Additional                                                                                                                                                                                                                                        \\ assumptions}                                                                                                       & Communication complexity                                                                                                         & Oracle complexity\tnote{1} \\
      \midrule
      \multicolumn{4}{c}{\textbf{Case 1: \cref{G,convex,strong_duality,hs_smooth}, $h^*$ is $\mu_{h^*}$-strongly convex}}                                                                                                                                                                                                 \\
      \midrule
      DCPA \cite{alghunaim2021dual}, NPGA \cite{li2024npga} & \textcircled{1}             & $\bO{\max\pa{\kappa_f, \kappa_{pd}\kappa_C} \log \pa{\frac{1}{\epsilon}}}$                                       & $\bO{\max\pa{\kappa_f, \kappa_{pd}\kappa_C} \log \pa{\frac{1}{\epsilon}}}$                                 \\
      \midrule
      \rowcolor{bgcolor}
      MiD2A-iDAPG, $\rho>0$\tnote{2}               & \textcircled{1}\tnote{3}             & $\tilde{\mO}\pa{\sqrt{\kappa_{pd}\kappa_C} \log \pa{\frac{1}{\epsilon}}}$                                        & $\mathcal{A}$:\tnote{4} $\tilde{\mO}\pa{\sqrt{\kappa_f\kappa_{pd}} \log \pa{\frac{1}{\epsilon}}}$, $\mathcal{B}$:  $\tilde{\mO}\pa{\sqrt{\kappa_{pd}} \log \pa{\frac{1}{\epsilon}}}$ \\
      \midrule
      \multicolumn{4}{c}{\textbf{Case 2: \cref{G,convex,strong_duality,hs_smooth}, $g_i = 0$ and $A_i$ has full row rank, $i \in  \sI$}}                                                                                                                                                                                  \\
      \midrule
      (80) in \cite{nedic2018improved}                      & $A_i = \I$, \textcircled{2} & $\bO{\sqrt{\kappa_f\kappa_C}\log \pa{\frac{1}{\epsilon}}}$                                                       & $\bO{\sqrt{\kappa_f\kappa_C}\log \pa{\frac{1}{\epsilon}}}$                                                 \\
      \midrule
      DCDA \cite{alghunaim2019proximal}                     & \textcircled{2}             & $\mO\lt(\max\lt(\kappa_f\kappa_{\A}, \kappa_C\rt) \log \pa{\frac{1}{\epsilon}}\rt)$                              & $\mO\lt(\max\lt(\kappa_f\kappa_{\A}, \kappa_C\rt) \log \pa{\frac{1}{\epsilon}}\rt)$                        \\
      \midrule
      \rowcolor{bgcolor}
      MiD2A-iDAPG, $\rho>0$ & \textcircled{1}                  & $\tilde{\mO}\pa{\sqrt{\kappa_{f}\kappa_{\A}\kappa_C} \log \pa{\frac{1}{\epsilon}}}$ & $\mathcal{A}$: $\tilde{\mO}\pa{\kappa_{f}\sqrt{\kappa_{\A}} \log \pa{\frac{1}{\epsilon}}}$, $\mathcal{B}$: $\tilde{\mO}\pa{\sqrt{\kappa_{f}\kappa_{\A}} \log \pa{\frac{1}{\epsilon}}}$ \\      
      \midrule
      \multicolumn{4}{c}{\textbf{Case 3: \cref{G,convex,strong_duality,hs_smooth}, $g_i = 0$ and $A = [A_1, \cdots, A_n]$ has full row rank}}                                                                                                                                                                             \\
      \midrule
      DCPA \cite{alghunaim2021dual}                         & \textcircled{1}             & $\mO\lt(\kappa_f\kappa_{\A_{\rho}}\kappa_C \log \pa{\frac{1}{\epsilon}}\rt)$                                     & $\mO\lt(\kappa_f\kappa_{\A_{\rho}}\kappa_C \log \pa{\frac{1}{\epsilon}}\rt)$                               \\
      \midrule
      NPGA \cite{li2024npga}                                & \textcircled{1}             & $\mO\lt(\max\lt(\kappa_f\kappa_{\A_{\rho}}, \kappa_C\rt) \log \pa{\frac{1}{\epsilon}}\rt)$                       & $\mO\lt(\max\lt(\kappa_f\kappa_{\A_{\rho}}, \kappa_C\rt) \log \pa{\frac{1}{\epsilon}}\rt)$                 \\
      \midrule
      \rowcolor{bgcolor}
      MiD2A-iDAPG, $\rho>0$ & \textcircled{1}                 & $\tilde{\mO}\pa{\sqrt{\kappa_f\kappa_{\A'_{\rho}}\kappa_C} \log \pa{\frac{1}{\epsilon}}}$ & $\mathcal{A}$: $\tilde{\mO}\pa{\kappa_f\sqrt{\kappa_{\A'_{\rho}}} \log \pa{\frac{1}{\epsilon}}}$, $\mathcal{B}$: $\tilde{\mO}\pa{\sqrt{\kappa_f\kappa_{\A'_{\rho}}} \log \pa{\frac{1}{\epsilon}}}$ \\
      \bottomrule
    \end{tabularx}
    \begin{tablenotes}
      \item $\mathcal{A}$: $\nabla f$ and $\prox_g$; $\mathcal{B}$: $\A$, $\A\T$, and $\nabla \mh^*$/$\prox_{\mh^*}$ ($\prox_{\mh^*}$/$\nabla \mh^*$ denotes $\prox_{\mh^*}$ if $h^*$ is proximal-friendly; otherwise, it denotes $\nabla \mh^*$). The oracle complexity of $\A$ and $\A\T$ represents the number of matrix-vector multiplications involving $\A$ and $\A\T$. \textcircled{1}: $h^*$ is proximal-friendly. \textcircled{2}: $h = \iota_{\set{b}}$, i.e., $h^*$ is a linear function. $\kappa_C = \frac{\ove{C}}{\ue{C}}$, $\kappa_f = \frac{L_f}{\mu_f}$, $\kappa_{pd} = \frac{\os^2(\A)}{\mu_f\mu_{h^*}/n}$, $\kappa_{\A}  = \frac{\os^2(\A)}{\mins^2(\A)}$, $\kappa_{\A_{\rho}} = \frac{\os^2(\A)}{\mine{\A\A\T + \rho L_f\mC}}$, $\kappa_{\A'_{\rho}} = \frac{\os^2(\A)}{\mine{\A\A\T + \rho L_fP_K(\mC)}}$.
      \item [1] When only one oracle complexity is provided, it indicates that the oracle complexities of $\mathcal{A}$ and $\mathcal{B}$ are the same.
      \item [2] By selecting different subproblem solvers and varying the value of $\rho$ (a key parameter), we can obtain numerous versions of iD2A and MiD2A, each with potentially significant differences in complexity. Due to space constraints, we only present the complexities for a specific version of MiD2A (MiD2A-iDAPG ($\rho>0$), which employs iDAPG \cite{li2025inexact} as its subproblem solver with $\rho>0$) here. The complete table is available in \cref{appendix_full_table}, which includes the complexities for many other versions of iD2A and MiD2A. \textbf{It is important to note that there are other versions of iD2A and MiD2A that exhibit lower communication complexity or oracle complexity of $\mathcal{A}$ for Cases 1 and 2.}
      \item [3] Unlike existing algorithms, the convergence of iD2A and MiD2A does not depend on \textcircled{1} (which includes \textcircled{2} as a special case). For the complexities of iD2A and MiD2A without \textcircled{1}, refer to \cref{appendix_full_table}.
      \item [4] $\tilde{\mO}$ hides a logarithmic factor that depends on the problem parameters; refer to \cref{iD2A_complexity_convex} for further details.
    \end{tablenotes}
  \end{threeparttable}
\end{table*}

\subsection{Related Works}
The presence of coupled constraints renders \cref{main_pro} substantially more complex to solve than classical decentralized unconstrained optimization problems (DUOPs). Two popular approaches have emerged for designing decentralized algorithms to address \cref{main_pro} or its special cases:
\begin{enumerate}
  \item \textbf{Decentralizing existing centralized algorithms for solving \cref{main_pro}:} In the centralized setting, two classical algorithms immediately come to mind for solving \cref{main_pro}: the alternating direction method of multipliers (ADMM) \cite{boyd2011distributed} and the primal-dual gradient method (also referred to as the primal-dual hybrid gradient method in some literature) \cite{arrow1958studies,chambolle2011first}. However, these algorithms cannot be directly implemented in a decentralized manner. To decentralize these methods, additional structures are needed. For example, a dynamic average consensus algorithm \cite{zhu2010discrete} can be used to track the time-varying violation of the coupled equality constraint in a decentralized manner \cite{falsone2020tracking}. Alternatively, the implicit tracking approach \cite{li2022implicit} can be employed for the same purpose.
  \item \textbf{Applying existing decentralized algorithms for solving DUOP to address the dual of \cref{main_pro}:} A key insight regarding \cref{main_pro} is that its dual problem exhibits structural equivalence to DUOP. This property emerges from the particular form of the coupled constraint, which enables decomposition of the dual function into $n$ agent-specific local dual functions, each dependent solely on local parameters ($f_i$, $g_i$, and $A_i$), see \cref{dual_pro_eq} for the mathematical formulation. This structural similarity immediately suggests the possibility of leveraging existing DUOP algorithms to solve the dual problem of \cref{main_pro}, effectively yielding decentralized solutions for the original constrained problem. The main computational challenge in this approach lies in the computation of the dual gradient. One straightforward option is to use the exact dual gradient \cite{falsone2020tracking,zhang2020distributed}; however, this may be impractical due to the requirement for exact optimization at each iteration. An effective alternative is to employ the inexact dual gradient \cite{alghunaim2021dual,li2024npga,li2023inexact}, which avoids this issue.
\end{enumerate}

Building on these two main approaches, numerous decentralized algorithms have been proposed to solve \cref{main_pro}. These algorithms mainly aim to achieve various convergence rates under milder conditions, including asymptotic/sublinear convergence and linear convergence:
\begin{enumerate}
  \item \textbf{Asymptotic/Sublinear Convergence:} Unlike earlier algorithms, Tracking-ADMM \cite{falsone2020tracking}, NECPD \cite{su2021distributed}, and Proj-IDEA \cite{li2022implicit} achieve convergence without requiring $f_i$ or $g_i$ to be strongly convex, as all these methods are essentially augmented Lagrangian methods (ALMs) applied to the dual of \cref{main_pro}. Moreover, Proj-IDEA, unlike Tracking-ADMM and NECPD, does not require the local constrained sets $\sX_i$ to be bounded. In contrast, algorithms such as DCDA \cite{alghunaim2019proximal} and NPGA \cite{li2023proximal} rely on the strong convexity of $f_i$ to guarantee convergence but can handle more general forms of $g_i$.
  \item \textbf{Linear Convergence:} Early algorithms could only achieve linear convergence when $g_i = 0$ and $A_i = \I$ \cite{nedic2018improved}, or $A_i$ has full row rank \cite{chang2014multi,alghunaim2019proximal}. However, DCPA \cite{alghunaim2021dual}, IDEA \cite{li2022implicit}, and NPGA \cite{li2024npga} relax the full row rank condition to that of $A = [A_1, \cdots, A_n]$. For the special case where $h = \iota_{\set{b}}$, iD2GT \cite{li2023inexact} achieves linear convergence without any full row rank condition. Furthermore, when $h^*$ is strongly convex, DCPA and NPGA can linearly converge without imposing any other conditions.
\end{enumerate}

Despite these advances, none of the aforementioned algorithms are accelerated methods. Here, ``accelerated'' refers to the generalized mechanism developed by Nesterov \cite{nesterov2018lectures}, which improves the convergence rate of the classical gradient method for smooth unconstrained optimization \cite{lin2015universal}. For DUOP, accelerated methods have been shown to significantly outperform unaccelerated ones in terms of convergence rates \cite{scaman2017optimal,arjevani2020ideal}. This observation is expected to hold for \cref{main_pro}: accelerated methods converge much faster than unaccelerated ones.
To the best of our knowledge, only one accelerated algorithm has been developed for a special case of \cref{main_pro}: $g_i = 0$, $A_i = \I$, and $h = \iota_{\set{b}}$ \cite{nedic2018improved}. However, this problem is overly specific compared to \cref{main_pro}, severely constraining its practical utility. Moreover, the algorithm's design critically depends on the problem's simplified structure, rendering extensions to more general settings improbable. Another limitation involves the requirement for each agent to share its local gradient with neighbors at every iteration, which introduces potential privacy risks \cite{abadi2016deep}.
Given these limitations, our objective is to develop accelerated algorithms that can effectively solve more general cases of \cref{main_pro}.

\subsection{Our Contributions}
The contributions of this work are as follows:
\begin{enumerate}
  \item \textbf{A Dual$^2$ Approach to Achieve Acceleration and the Resulting Accelerated Algorithms for \cref{main_pro}:} Directly designing accelerated decentralized algorithms for \cref{main_pro} is highly challenging. Instead, we propose a novel dual$^2$ approach that enables acceleration by decomposing the original problem into two tractable components: a convex unconstrained optimization problem \cref{final_pro} and a saddle-point problem \cref{saddle_pro}. We prove that \cref{final_pro} is well-defined and equivalent to \cref{main_pro}: it is smooth everywhere with gradients computable via \cref{saddle_pro}, and its solution exists and implies a solution to \cref{main_pro}. Leveraging these properties, we incorporate Nesterov's acceleration technique \cite{nesterov2018lectures} into \cref{final_pro}, deriving two accelerated algorithms for \cref{main_pro}: iD2A and its multi-consensus variant, MiD2A.
  \item \textbf{Theoretical and Experimental Superiority over Existing Algorithms:} We first prove the asymptotic convergence of iD2A and MiD2A under the mild condition that $h$ is a closed proper convex function. This significantly generalizes existing algorithms that only handle special cases of $h$; see \cref{2024}. Furthermore, we prove linear convergence rates for three scenarios: (1) $h^*$ is strongly convex, (2) $A_i$ has full row rank, and (3) $A = [A_1, \cdots, A_n]$ has full row rank. We then analyze the communication and computational complexity bounds, which depend on the choice of saddle-point subproblem solver (e.g., iDAPG method proposed in \cite{li2025inexact}). Crucially, when an appropriate subproblem solver is selected, both iD2A and MiD2A achieve significantly lower communication and computational complexities than state-of-the-art (SOTA) algorithms in all scenarios; see \cref{tab:complexity_h}. Several numerical experiments further confirm these theoretical advantages.
\end{enumerate}

The paper is organized as follows: \cref{algo_develop} outlines the development of iD2A and MiD2A. The convergence rates for both algorithms are established in \cref{converge_rate}, while their complexities are analyzed in \cref{complexity}. \cref{experiments} presents the numerical experiments, and \cref{conclusion} draws the conclusions.

\section{Preliminaries}
\subsection{Notations}
Throughout this paper, unless specified otherwise, we use the standard inner product $\dotprod{\cdot, \cdot}$ and the standard Euclidean norm $\norm{\cdot}$ for vectors, and the standard spectral norm $\norm{\cdot}$ for matrices. For $x \in \R$, $\lfloor x \rfloor$ denotes the largest integer not greater than $x$. The set $\R^d_+$ represents $\set{x \in \R^d | x \geq \0}$. The vectors $\1_n$ and $\0_n$ represent the vectors of $n$ ones and zeros, respectively. For vectors $x$ and $y$, $\coll{x, y} = [x\T, y\T]\T$, and $\Span{x}$ denotes the span of $x$. The matrix $\I_n$ denotes the $n \times n$ identity matrix, and $\0_{m \times n}$ denotes the $m \times n$ zero matrix.
For a matrix $B \in \R^{m \times n}$, let $\mins(B)$, $\us(B)$, and $\os(B)$ represent the smallest, smallest nonzero, and largest singular values of $B$, respectively. Additionally, let $\Range{B}$ and $\Null{B}$ denote its column space and null space. For a symmetric matrix $A \in \R^{n \times n}$, $\mine{A}$, $\ue{A}$, and $\ove{A}$ denote the smallest, smallest nonzero, and largest eigenvalues of $A$, respectively. We use $A > 0$ (or $A \geq 0$) indicates that $A$ is positive definite (or positive semi-definite).
For a function $f: \R^n \rightarrow \exs$, let $\dom{f} = \set{x \in \R^n | f(x) < +\infty}$ denote its (effective) domain. $S_f(x)$ denotes one of its subgradients at $x$, and $\partial f(x)$ denotes its subdifferential at $x$, which includes all subgradients at that point. The proximal operator of $f$ is defined as $\prox_{\alpha f}(x) = \arg\min_{y} f(y) + \frac{1}{2\alpha}\norm{y-x}^2$ for stepsize $\alpha$, and the Fenchel conjugate of $f$ is defined as $f^*(y) = \sup_{x \in \R^n}y\T x-f(x)$. The smoothness parameter, strong convexity parameter, and condition number are denoted by $\mu_f$, $L_f$, and $\kappa_f=\frac{L_f}{\mu_f}$, respectively. Additionally, the Moreau envelope (or Moreau-Yosida regularization) of $f$ is given by $M_{\gamma f}(x) = \inf_{y}\set{f(y) + \frac{1}{2\gamma}\norm{x-y}^2}$, where $\gamma>0$. For a set $\sX \subseteq \R^n$, $\ri{\sX}$ denotes its relative interior, and $\dist{y, \sX} = \min_{x \in \sX}\norm{y-x}$ denotes the distance from a point $y \in \R^n$ to the set. The indicator function of $\sX$ is defined as $\iota_{\sX}(x) = \lt\{
  \begin{aligned}
    0,      \   & x \in \sX,        \\
    +\infty, \  & \text{otherwise}.
  \end{aligned}\right.$

\subsection{Convex Analysis} \label{convex_analysis}
At the beginning, we should point out that most definitions appeared in this subsection are borrowed from \cite{hiriart2004fundamentals}. An extended-valued function $f: \R^n \rightarrow \exs$, which may take the value $+\infty$ and includes the finite-valued function $g: \R^n \rightarrow \R$ as a special case, is said to be convex on a set $\sX \subset \R^n$ if $\sX$ is convex, and
\eqe{ \label{1358}
  f(\theta x + (1-\theta x')) \leq \theta f(x) + (1-\theta)f(x'), \ \forall \theta \in [0, 1]
}
holds for any $x, x' \in \sX$, where \cref{1358} is considered as an inequality in $\exs$. Particularly, $f$ is said to be convex if \cref{1358} holds for any $x, x' \in \R^n$, and $f$ is said to be $\mu$-strongly convex (on $\sX$) if $f-\frac{\mu}{2}\norm{\cdot}^2$ is convex (on $\sX$). An obvious fact is that, $\dom{f}$ is convex and $f$ is convex (strongly convex) on $\dom{f}$ is sufficient and necessary for $f$ to be convex (strongly convex).
We say $f$ is closed if it is lower semicontinuous on $\R^n$ or if its epigraph is closed. Besides, $f$ is said to be $L$-smooth if it is differentiable on $\R^n$ and $\nabla f$ is $L$-Lipschitz continuous on $\R^n$, i.e.,
\eqe{
  \norm{\nabla f(x)-\nabla f(x')} \leq L\norm{x-x'}, \ \forall x, x' \in \R^n.
}

The following two lemmas reveal the duality between the strong convexity of a function $f$ and the smoothness of its Fenchel conjugate $f^*$.
\begin{lemma} \cite[Lecture 5]{vandenberghe2022om} \label{conjugate_smooth}
  Assume that $f: \R^n \rightarrow \exs$ is closed proper\footnote{We say a convex function $f$ is proper if $f(x)>-\infty$ for all $x$ and $\dom{f} \neq \emptyset$.} and $\mu$-strongly convex with $\mu>0$, then (1) $\dom{f^*} = \R^n$; (2) $f^*$ is differentiable on $\R^n$ with $\nabla f^*(y) = \arg\max_{x \in \dom{f}} y\T x - f(x)$; (3) $f^*$ is convex and $\frac{1}{\mu}$-smooth.
\end{lemma}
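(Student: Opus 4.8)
The plan is to establish the three claims in order, using strong convexity to control the maximization defining $f^*$ and the conjugate--subgradient correspondence to transfer regularity between $f$ and $f^*$. The unifying observation is that for each $y \in \R^n$ the function $x \mapsto f(x) - \dotprod{y, x}$ is again $\mu$-strongly convex, since subtracting a linear term preserves strong convexity; its infimum, which equals $-f^*(y)$, will be attained at a unique point, and that point will turn out to be exactly $\nabla f^*(y)$. Thus proving (1) and the well-definedness of the maximizer simultaneously is the natural first move.

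First I would prove (1) together with the existence and uniqueness of the maximizer $x^*(y) := \arg\max_{x} \dotprod{y,x} - f(x)$. Since $f$ is proper, $\ri{\dom{f}}$ is nonempty; picking $x_0 \in \ri{\dom{f}}$ yields a subgradient $s \in \partial f(x_0)$, and the $\mu$-strong convexity inequality gives the quadratic lower bound $f(x) \ge f(x_0) + \dotprod{s, x - x_0} + \frac{\mu}{2}\norm{x - x_0}^2$. Hence $\dotprod{y,x} - f(x)$ is dominated by a concave quadratic that tends to $-\infty$ as $\norm{x} \to \infty$, so $f^*(y) < +\infty$ for every $y$, i.e. $\dom{f^*} = \R^n$. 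Coercivity together with lower semicontinuity (the closedness of $f$) guarantees the supremum is attained, and $\mu$-strong convexity of $x \mapsto f(x) - \dotprod{y,x}$ forces the maximizer to be unique, so $x^*(y)$ is well defined.

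Next, for (2) I would invoke the conjugate--subgradient correspondence for closed proper convex functions: $x \in \partial f^*(y) \iff y \in \partial f(x) \iff x$ maximizes $\dotprod{y, \cdot} - f$. Consequently $\partial f^*(y) = \set{x^*(y)}$ is a singleton for every $y$. Since a convex function whose subdifferential at a point is a singleton is differentiable there with gradient equal to that element, $f^*$ is differentiable on $\R^n$ with $\nabla f^*(y) = x^*(y) = \arg\max_{x \in \dom{f}} \dotprod{y,x} - f(x)$, as claimed.

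Finally, for (3): convexity of $f^*$ is automatic, being a pointwise supremum of the affine maps $y \mapsto \dotprod{y,x} - f(x)$. For $\frac{1}{\mu}$-smoothness, set $x = \nabla f^*(y)$ and $x' = \nabla f^*(y')$, so that $y \in \partial f(x)$ and $y' \in \partial f(x')$. Adding the two $\mu$-strong convexity inequalities at $x$ and $x'$ yields the strong monotonicity bound $\dotprod{y - y', x - x'} \ge \mu \norm{x - x'}^2$; combining this with Cauchy--Schwarz gives $\norm{x - x'} \le \frac{1}{\mu}\norm{y - y'}$, so $\nabla f^*$ is $\frac{1}{\mu}$-Lipschitz. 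The main obstacle is not any single computation but the careful use of closedness: lower semicontinuity is what makes the infimum attained in (1), and it is precisely the hypothesis under which the equivalence $x \in \partial f^*(y) \iff y \in \partial f(x)$ holds in (2); without it the biconjugate identity $f^{**} = f$ can fail and the gradient formula breaks down.
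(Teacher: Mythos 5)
Your proof is correct. The paper does not prove this lemma at all---it is imported verbatim from the cited reference---and your argument (quadratic lower bound from a subgradient at a point of $\ri{\dom{f}}$ to get $\dom{f^*} = \R^n$ and attainment, the correspondence $x \in \partial f^*(y) \iff y \in \partial f(x)$ to identify $\partial f^*(y)$ as the singleton maximizer, and strong monotonicity of $\partial f$ plus Cauchy--Schwarz for the $\frac{1}{\mu}$-Lipschitz bound on $\nabla f^*$) is precisely the standard derivation used in that source.
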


\begin{lemma} \cite[Theorem E.4.2.2]{hiriart2004fundamentals} \label{conjugate_strong_convex}
  Assume that $f: \R^n \rightarrow \R$ is convex and $L$-smooth with $L>0$, then $f^*$ is $\frac{1}{L}$-strongly convex on every convex set $\mathcal{Y} \subseteq \dom{\partial f^*}$, where $\dom{\partial f^*} = \set{y \in \R^n | \partial f^*(y) \neq \emptyset}$.
\end{lemma}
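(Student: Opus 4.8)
The plan is to reduce the claim to the first-order characterization of strong convexity of $f^*$ on $\mathcal{Y}$, obtained by transferring a co-coercivity-type inequality for $\nabla f$ through the conjugate subgradient correspondence. First I would establish, for the convex $L$-smooth $f$, the key inequality
\eqe{
  f(x) - f(x') - \dotprod{\nabla f(x'), x-x'} \geq \frac{1}{2L}\norm{\nabla f(x) - \nabla f(x')}^2, \quad \forall x, x' \in \R^n.
}
This follows by fixing $x'$ and applying the descent lemma to the auxiliary function $g(z) := f(z) - \dotprod{\nabla f(x'), z}$, which is convex, $L$-smooth, and minimized at $x'$: minimizing the quadratic upper model of $g$ built at $x$ gives $g(x - \frac{1}{L}\nabla g(x)) \leq g(x) - \frac{1}{2L}\norm{\nabla g(x)}^2$, whence $g(x') \leq g(x) - \frac{1}{2L}\norm{\nabla g(x)}^2$, and substituting $\nabla g(x) = \nabla f(x) - \nabla f(x')$ yields the inequality after rearranging.

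Next I would invoke the conjugate subgradient correspondence. Since $f: \R^n \rightarrow \R$ is finite-valued and convex it is continuous, hence closed, so $f^{**}=f$ and for any $y$ we have $s \in \partial f^*(y) \iff y \in \partial f(s) = \set{\nabla f(s)}$; in particular Fenchel--Young holds with equality, $f^*(y) = \dotprod{y, s} - f(s)$. Now fix $y_1, y_2 \in \mathcal{Y} \subseteq \dom{\partial f^*}$ and choose $s_i \in \partial f^*(y_i)$, so that $y_i = \nabla f(s_i)$. Writing $f^*(y_1)$ and $f^*(y_2)$ via their Fenchel--Young equalities, the difference $f^*(y_2) - f^*(y_1) - \dotprod{s_1, y_2 - y_1}$ simplifies to $f(s_1) - f(s_2) - \dotprod{\nabla f(s_2), s_1 - s_2}$, and the key inequality applied with $x = s_1$, $x' = s_2$ (together with $\norm{\nabla f(s_1) - \nabla f(s_2)} = \norm{y_1 - y_2}$) gives
\eqe{
  f^*(y_2) \geq f^*(y_1) + \dotprod{s_1, y_2 - y_1} + \frac{1}{2L}\norm{y_1 - y_2}^2.
}

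Finally I would convert this into strong convexity on $\mathcal{Y}$. Setting $\psi := f^* - \frac{1}{2L}\norm{\cdot}^2$, the displayed inequality rearranges (using $s_1 - \frac{1}{L}y_1 \in \partial\psi(y_1)$) into the subgradient inequality $\psi(y_2) \geq \psi(y_1) + \dotprod{s_1 - \frac{1}{L}y_1, y_2 - y_1}$ valid for all $y_1, y_2 \in \mathcal{Y}$. Since every point of the convex set $\mathcal{Y} \subseteq \dom{\partial f^*} = \dom{\partial\psi}$ admits such a subgradient, a standard convex-combination argument at an intermediate point shows $\psi$ is convex on $\mathcal{Y}$, i.e.\ $f^*$ is $\frac{1}{L}$-strongly convex on $\mathcal{Y}$. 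I expect the main obstacle to be the first step---the co-coercivity-flavored inequality---together with the bookkeeping needed to transfer it correctly through the conjugacy relation and to respect the restriction $\mathcal{Y} \subseteq \dom{\partial f^*}$, outside of which $f^*$ need not even be finite, so that neither the inequality nor strong convexity can be expected to hold globally.
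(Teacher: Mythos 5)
The paper offers no proof of this lemma; it is quoted verbatim from \cite[Theorem E.4.2.2]{hiriart2004fundamentals}, and your argument is correct and is essentially the standard textbook proof of that result. Each step checks out: the auxiliary-function derivation of the inequality $f(x)-f(x')-\dotprod{\nabla f(x'),x-x'}\geq\frac{1}{2L}\norm{\nabla f(x)-\nabla f(x')}^2$ is valid, the transfer through the Fenchel--Young equality (using $s\in\partial f^*(y)\iff y=\nabla f(s)$ for the closed finite-valued $f$) correctly produces the strong-convexity subgradient inequality at every point of $\dom{\partial f^*}$, and the final convex-combination step legitimately uses only points of the convex set $\mathcal{Y}\subseteq\dom{\partial f^*}$, which matches the paper's definition of strong convexity on a set.
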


The following lemma demonstrates the relationship between the subdifferentials of a function and its conjugate.
\begin{lemma} \label{conjugate_subdiff} \cite[Corollary E.1.4.4]{hiriart2004fundamentals}
  Assume that $f: \R^n \rightarrow \exs$ is closed proper convex, we have
  \eqe{
    y \in \partial f(x) \iff x \in \partial f^*(y), \ \forall x, y \in \R^n.
  }
\end{lemma}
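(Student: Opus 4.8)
The plan is to reduce both membership relations to the equality case of the Fenchel--Young inequality and then exploit the symmetry provided by the biconjugate (Fenchel--Moreau) theorem. The central observation is that, for a closed proper convex $f$, a vector $y$ belongs to $\partial f(x)$ precisely when $x$ attains the supremum defining $f^*(y)$, i.e. when $f(x) + f^*(y) = y\T x$. Since this equality is symmetric in the roles of $f$ and $f^*$, applying it twice is expected to yield the claimed equivalence.

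First I would establish the characterization
\eqe{
  y \in \partial f(x) \iff f(x) + f^*(y) = y\T x.
}
For the forward direction, the subgradient inequality $f(x') \geq f(x) + y\T(x'-x)$ for all $x'$ rearranges to $y\T x - f(x) \geq y\T x' - f(x')$, which shows that $x$ maximizes $y\T(\cdot) - f(\cdot)$; hence $f^*(y) = y\T x - f(x)$ by the definition of the conjugate. The converse reverses this rearrangement: from $f^*(y) = y\T x - f(x) \geq y\T x' - f(x')$ for every $x'$ one recovers the subgradient inequality, so $y \in \partial f(x)$.

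Next I would apply this very same characterization to the conjugate $f^*$. Because $f^*$ is always closed and convex as a pointwise supremum of affine functions, and is proper whenever $f$ is closed proper convex (a closed proper convex $f$ admits an affine minorant), the identity yields
\eqe{
  x \in \partial f^*(y) \iff f^*(y) + f^{**}(x) = y\T x.
}
The crucial ingredient, and the step I expect to be the main obstacle, is the Fenchel--Moreau theorem: for closed proper convex $f$ one has $f^{**} = f$. This is exactly where the closedness hypothesis is indispensable, since without it one would only have $f^{**} \leq f$ and the backward implication of the lemma would break.

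Finally, substituting $f^{**} = f$ into the second equivalence gives $x \in \partial f^*(y) \iff f^*(y) + f(x) = y\T x$, whose right-hand side is identical to that appearing in the first equivalence. Chaining the two therefore delivers $y \in \partial f(x) \iff x \in \partial f^*(y)$ for all $x, y \in \R^n$, completing the argument.
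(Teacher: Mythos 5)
Your argument is correct: the Fenchel--Young equality characterization of the subdifferential, applied once to $f$ and once to $f^*$, combined with the Fenchel--Moreau biconjugation theorem $f^{**}=f$ (which is exactly where the closed-proper-convex hypothesis enters), is the standard proof of this fact. The paper itself offers no proof, citing \cite[Corollary E.1.4.4]{hiriart2004fundamentals} directly, and your argument is precisely the one given there, so there is nothing to compare beyond noting that your reasoning is sound and complete.
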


The following lemma pertains to subdifferential calculus, which can be easily derived from \cite[Theorems 6.2 and 23.8]{rockafellar1970convex}.
\begin{lemma}*\footnote{Throughout this paper, ``*'' following a lemma indicates that its proof is omitted due to its straightforward nature.} \label{subdiff}
  Assume that $f_1: \R^n \rightarrow \exs$ is proper convex and $f_2: \R^n \rightarrow \R$ is convex, then $\partial (f_1 + f_2) = \partial f_1 + \partial f_2$.
\end{lemma}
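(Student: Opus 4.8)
The plan is to establish the two set inclusions separately, since the claimed equality is really a pointwise statement about the sets of subgradients at each $x$. The forward inclusion $\partial f_1(x) + \partial f_2(x) \subseteq \partial(f_1 + f_2)(x)$ is elementary and holds for arbitrary convex functions, finite-valued or not: if $s_1 \in \partial f_1(x)$ and $s_2 \in \partial f_2(x)$, then adding the two subgradient inequalities $f_i(z) \geq f_i(x) + s_i\T(z-x)$, $i=1,2$, immediately yields $(f_1+f_2)(z) \geq (f_1+f_2)(x) + (s_1+s_2)\T(z-x)$ for all $z \in \R^n$, so $s_1 + s_2 \in \partial(f_1+f_2)(x)$. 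First I would dispose of this direction in a single line.

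The reverse inclusion $\partial(f_1 + f_2)(x) \subseteq \partial f_1(x) + \partial f_2(x)$ is where the substance lies, and it is exactly the content of the Moreau--Rockafellar sum rule \cite[Theorem 23.8]{rockafellar1970convex}, which asserts the equality provided the constraint qualification $\ri(\dom{f_1}) \cap \ri(\dom{f_2}) \neq \emptyset$ holds. The key observation that renders the lemma ``straightforward'' is that $f_2$ is finite-valued on all of $\R^n$, hence $\dom{f_2} = \R^n$ and $\ri(\dom{f_2}) = \R^n$. Consequently the constraint qualification collapses to the single requirement $\ri(\dom{f_1}) \neq \emptyset$. Since $f_1$ is proper convex, $\dom{f_1}$ is a nonempty convex subset of $\R^n$, and \cite[Theorem 6.2]{rockafellar1970convex} guarantees that every nonempty convex set has nonempty relative interior; thus $\ri(\dom{f_1}) \neq \emptyset$ and the qualification is satisfied automatically.

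Combining the two directions gives $\partial(f_1+f_2)(x) = \partial f_1(x) + \partial f_2(x)$ at every $x$, which is precisely the functional identity $\partial (f_1 + f_2) = \partial f_1 + \partial f_2$. I do not anticipate any genuine obstacle: the only nontrivial ingredient, namely the constraint qualification required to upgrade the trivial inclusion into an equality in \cite[Theorem 23.8]{rockafellar1970convex}, is supplied for free by the hypothesis that $f_2$ takes only finite values, which is exactly why the statement can be invoked here without a detailed argument.
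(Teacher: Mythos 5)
Your proposal is correct and follows exactly the route the paper intends: the paper omits the proof but explicitly notes the lemma "can be easily derived from Theorems 6.2 and 23.8 of \cite{rockafellar1970convex}", and you invoke precisely those two results in precisely those roles (the Moreau--Rockafellar sum rule for the nontrivial inclusion, and nonemptiness of the relative interior of $\dom{f_1}$ to discharge the constraint qualification, which is automatic since $\dom{f_2} = \R^n$). Nothing is missing.
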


\section{Algorithm Development} \label{algo_develop}
\subsection{Basic Assumptions}
Let $\mx = \col{x} \in \R^d$, where $\sum_{i=1}^nd_i = d$. Define
\eqe{
&f(\mx) = \sum_{i=1}^n f_i(x_i), \ g(\mx) = \sum_{i=1}^n g_i(x_i), \\
&A = [A_1, \cdots, A_n] \in \R^{p \times d}, \ \A = \diag{A_1, \cdots, A_n} \in \R^{np \times d}.
\nonumber
}
Using the above notation, \cref{main_pro} can be rewritten in the following compact form:
\eqe{
  \min_{\mx \in \R^d} f(\mx) + g(\mx) + h(A\mx).
}

Throughout this paper, we assume that \cref{strong_duality,G,convex} hold.
\begin{assumption} \label{strong_duality}
  Slater's condition holds for \cref{main_pro}, i.e., $A\ri{\dom{g}} \cap \ri{\dom{h}} \neq \emptyset$.
\end{assumption}

\begin{assumption} \label{convex}
  For $i \in \sI$, $f_i$, $g_i$, and $h$ satisfy
  \begin{enumerate}
    \item $f_i$ is $\mu_i$-strongly convex\footnote{For properties of a function, such as convexity or continuity, if we do not specify the region where the property holds, the readers should always assume that it is valid on the function's entire domain.} and $L_i$-smooth for $L_i \geq \mu_i > 0$;
    \item $h$ and $g_i$ are proper convex and lower semicontinuous;
    \item $g_i$ is proximal-friendly \footnote{We say a function $f$ is proximal-friendly if $\prox_{\alpha f}(x)$ can be easily computed for any $x$. For example, this is the case when $\prox_{\alpha f}(x)$ can be calculated by a closed-form expression of $x$.}.
  \end{enumerate}
\end{assumption}

\begin{assumption} \label{G}
  The gossip matrix $C = [c_{ij}] \in \R^{n \times n}$ satisfies
  \begin{enumerate}
    \item $c_{ij}>0$ if $i = j$ or $(i,j) \in \mathcal{E}$, otherwise $c_{ij} = 0$, where $\mathcal{E}$;
    \item $C$ is symmetric;
    \item $C$ is positive semidefinite;
    \item $\Null{C} = \Span{\1_n}$.
  \end{enumerate}
\end{assumption}

Under \cref{convex}, $f_i + g_i$ and $h$ are closed proper convex, while $f$ is $\mu_f$-strongly convex and $L_f$-smooth, where $\mu_f = \min_{i \in \sI}\mu_i$ and $L_f = \max_{i \in \sI}L_i$. Additionally,  \cref{convex,strong_duality} imply two important facts: (1) strong duality holds for \cref{main_pro}, and (2) the solution to the dual of \cref{main_pro} exists \cite[Corollary 31.2.1]{rockafellar1970convex}.

\begin{remark}
  Since the network topology $\mathcal{G}$ is undirected and connected, a gossip matrix satisfying \cref{G} can be readily constructed. A straightforward choice for the gossip matrix is the Laplacian or weighted Laplacian matrix of $\mathcal{G}$. Alternatively, we can define $C = \I - W$, where $W \in \R^{n \times n}$ is a doubly stochastic matrix associated with $\mathcal{G}$. For detailed methods of constructing $W$, refer to \cite{shi2015extra}.
\end{remark}

\subsection{A Dual$^2$ Approach to Achieve Acceleration} \label{2241}
\begin{algorithm}[tb]
  \caption{\small{Inexact Dual$^2$ Accelerated Gradient Method (iD2A)}}
  \label{alg:iD2A}
  \begin{algorithmic}[1]
    \small
    \Require
    $K>0$, $\rho \geq 0$, $C$, $L_{\fr}$, $\mu_{\fr}$
    \Ensure
    $\mx^K$
    \State $\mx^0 = \ml^0 = \0$, $\mz^0 = \mw^0 = \0$
    \State Set $\beta_k = \frac{\sqrt{\kappa_{\fr}}-1}{\sqrt{\kappa_{\fr}}+1}$ if $\mu_{\fr} > 0$, where $\kappa_{\fr} = \frac{L_{\fr}}{\mu_{\fr}}$; otherwise set $\beta_k = \frac{k}{k+3}$.
    \For {$k=0,\dots, K-1$}
    \State Solve
    \begin{equation} \label{inner_pro} \tag{SP}
      \begin{aligned}
      \min_{\mx \in \R^d}\max_{\ml \in \R^{np}} f(\mx) &+ g(\mx) + \ml\T\A\mx \\
      &- \pa{\mh^*(\ml) + \frac{\rho}{2}\ml\T\mC\ml + \ml\T\mz^k}
      \end{aligned}
    \end{equation}
    to obtain an inexact solution $\pa{\ml\+, \mx\+}$.
    \State $\mw\+ = \mz^k + \frac{1}{L_{\fr}}\mC\ml\+$
    \State $\mz\+ = \mw\+ + \beta_k\pa{\mw\+-\mw^k}$
    \EndFor
  \end{algorithmic}
\end{algorithm}

\begin{algorithm}[tb]
  \caption{\small{Multi-Consensus Inexact Dual$^2$ Accelerated Gradient Method (MiD2A)}}
  \label{alg:MiD2A}
  \begin{algorithmic}[1]
    \small
    \Require
    $T>0$, $K = \lfloor \sqrt{\kappa_C} \rfloor$, $\rho \geq 0$, $C$, $L_{\fr}$, $\mu_{\fr}$
    \Ensure
    $\mx^T$
    \State $\mx^0 = \ml^0 = \0$, $\mz^0 = \mw^0 = 0$
    \State Set $\beta_k = \frac{\sqrt{\kappa_{\fr}}-1}{\sqrt{\kappa_{\fr}}+1}$ if $\mu_{\fr} > 0$, where $\kappa_{\fr} = \frac{L_{\fr}}{\mu_{\fr}}$; otherwise set $\beta_k = \frac{k}{k+3}$.
    \For {$k=0,\dots, T-1$ }
    \State Solve
    \eqe{ \label{inner_pro_m}
      &\min_{\mx \in \R^d}\max_{\ml \in \R^{np}} f(\mx) + g(\mx) + \ml\T\A\mx - \\
      &\pa{\mh^*(\ml) + \ml\T\mz^k + \frac{\rho}{2}\dotprod{\ml, \text{AcceleratedGossip}\protect\footnotemark(\ml, \mC, K)}}
      \nonumber
    }
    to obtain an inexact solution $\pa{\ml\+, \mx\+}$.
    \State $\mw\+ = \mz^k + \frac{1}{L_{\fr}}\text{AcceleratedGossip}(\ml\+, \mC, K)$
    \State $\mz\+ = \mw\+ + \beta_k\pa{\mw\+-\mw^k}$
    \EndFor
  \end{algorithmic}
\end{algorithm}
\footnotetext{AcceleratedGossip is an accelerated consensus algorithm proposed in \cite{scaman2017optimal}, which can be found in \cref{appendix_AcceleratedGossip}.}

The following lemma introduces three optimization problems related to \cref{main_pro}.
  \begin{lemma} \label{1552}
    Assume that \cref{convex,strong_duality,G} hold. Then,
    \begin{enumerate}
      \item The Lagrangian dual problem of \cref{main_pro} is formulated as
            \eqe{ \label{dual_pro_eq}
              \min_{\lambda \in \R^p} \sum_{i=1}^n \phi_i(\lambda) + h^*(\lambda),
            }
            where $\phi_i(\lambda) = -\inf_{x_i}\pa{f_i(x_i) + g_i(x_i) + \lambda\T A_ix_i}$, $i \in \sI$.
      \item Problem \cref{dual_pro_eq} is equivalent to
            \eqe{ \label{dual_pro_consensus}
              \min_{\ml \in \R^{np}} &\mph(\ml) + \mh^*(\ml) \\
              \text{s.t.} \ &\sqrt{\mC}\ml = \0,
            }
            where $\ml = \col{\lambda}$, $\sqrt{\mC} = \sqrt{C} \otimes \I_p$ ($\sqrt{C}$ satisfies $C = \sqrt{C}\sqrt{C}$), $\mph(\ml) = \sum_{i=1}^n \phi_i(\lambda_i)$, and $\mh^*(\ml) = \frac{1}{n} \sum_{i=1}^n h^*(\lambda_i)$.
      \item The augmented Lagrangian dual problem of \cref{dual_pro_consensus} is given by
            \eqe{ \label{final_pro}
              \min_{\my \in \R^{np}} \fr(\my),
            }
            where $\fr(\my) = H^*\pa{-\sqrt{\mC}\my}$, and $H(\ml) = \mph(\ml) + \mh^*(\ml) + \frac{\rho}{2}\ml\T\mC\ml$.
    \end{enumerate}
    Additionally, the solutions to \cref{dual_pro_eq}, \cref{dual_pro_consensus}, and \cref{final_pro} exist.
  \end{lemma}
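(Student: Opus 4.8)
\emph{Proof strategy.} The plan is to derive the three identities by direct computation and then settle the existence assertions by pushing the existence of a dual solution of \cref{main_pro} down the chain \cref{dual_pro_eq} $\to$ \cref{dual_pro_consensus} $\to$ \cref{final_pro}. For the first claim I would attach a multiplier $\lambda \in \R^p$ to the coupling constraint $\sum_{i=1}^n A_i x_i = y$ and form the Lagrangian. Minimizing over $(x_i)_{i \in \sI}$ and $y$ separates into $n$ independent inner problems that yield $-\phi_i(\lambda)$ by the definition of $\phi_i$, together with $\inf_y \pa{h(y) - \lambda\T y} = -h^*(\lambda)$ by the definition of the conjugate; negating the resulting concave objective produces \cref{dual_pro_eq}. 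This part is routine.

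For the second claim the crux is that, with $\sqrt{C}$ the symmetric positive semidefinite square root, $\sqrt{\mC}\ml = \0 \iff \mC\ml = \0 \iff \ml \in \Null{\mC}$, and since $\mC = C \otimes \I_p$ with $\Null{C} = \Span{\1_n}$ by \cref{G}, the feasible set of \cref{dual_pro_consensus} is exactly the consensus subspace $\set{\1_n \otimes \lambda : \lambda \in \R^p}$. On this subspace $\mph(\ml) = \sum_{i=1}^n \phi_i(\lambda)$, while the factor $\frac{1}{n}$ in $\mh^*$ is precisely what yields $\mh^*(\ml) = \frac{1}{n}\sum_{i=1}^n h^*(\lambda) = h^*(\lambda)$; hence $\lambda \leftrightarrow \1_n \otimes \lambda$ is an objective-preserving bijection between the feasible sets of the two problems, giving the equivalence. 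For the third claim I would write the augmented Lagrangian of \cref{dual_pro_consensus} with multiplier $\my$, use $\norm{\sqrt{\mC}\ml}^2 = \ml\T\mC\ml$ and the symmetry of $\sqrt{\mC}$ to rewrite it as $H(\ml) + \dotprod{\sqrt{\mC}\my, \ml}$, and take the infimum over $\ml$ to obtain the dual value $-H^*\pa{-\sqrt{\mC}\my} = -\fr(\my)$; maximizing this is exactly \cref{final_pro}, another routine conjugate manipulation.

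For existence, the solution of \cref{dual_pro_eq} is inherited from the existence of a dual solution of \cref{main_pro}, which holds under \cref{convex,strong_duality} by \cite[Corollary 31.2.1]{rockafellar1970convex}, and the bijection above immediately transfers this to \cref{dual_pro_consensus}: if $\lambda^*$ solves \cref{dual_pro_eq}, then $\ml^* = \1_n \otimes \lambda^*$ solves \cref{dual_pro_consensus}. The main obstacle is showing that \cref{final_pro} attains its minimum, and I would do this by constructing a minimizer explicitly rather than invoking a constraint qualification. The optimality of $\lambda^*$ together with \cref{subdiff} gives $s_i \in \partial \phi_i(\lambda^*)$ and $t \in \partial h^*(\lambda^*)$ with $\sum_{i=1}^n s_i + t = \0$. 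Letting $v^* \in \R^{np}$ have blocks $v_i^* = s_i + \frac{1}{n}t$, block-wise subdifferential calculus (again via \cref{subdiff}) gives $v^* \in \partial(\mph + \mh^*)(\ml^*)$, and its blocks sum to $\0$, so $v^* \in \text{Range}(\sqrt{\mC}) = \set{v \in \R^{np} : \sum_{i=1}^n v_i = \0}$, the orthogonal complement of $\Null{\sqrt{\mC}}$. Choosing $\my^*$ with $\sqrt{\mC}\my^* = -v^*$ then gives $-\sqrt{\mC}\my^* = v^* \in \partial(\mph + \mh^*)(\ml^*) = \partial H(\ml^*)$, where the last equality uses that the augmenting term contributes $\rho\mC\ml^* = \0$.

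Finally, \cref{conjugate_subdiff} converts this into $\ml^* \in \partial H^*(-\sqrt{\mC}\my^*)$, and the elementary inclusion $-\sqrt{\mC}\,\partial H^*(-\sqrt{\mC}\my^*) \subseteq \partial \fr(\my^*)$ — valid for any convex function composed with a linear map, and therefore requiring no constraint qualification — yields $\0 = -\sqrt{\mC}\ml^* \in \partial \fr(\my^*)$, certifying that $\my^*$ minimizes $\fr$. I expect the delicate points to be the block-wise subdifferential calculus, the characterization $\text{Range}(\sqrt{\mC}) = \set{v : \sum_i v_i = \0}$, and the deliberate use of only the easy direction of the chain rule, which lets me sidestep domain and qualification issues for $H^*$ (whose domain need not be all of $\R^{np}$).
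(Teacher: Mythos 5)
Your proposal is correct and follows essentially the same route as the paper's proof: the derivation of \cref{dual_pro_eq} by separating the Lagrangian, the equivalence with \cref{dual_pro_consensus} via $\Null{\sqrt{C}} = \Span{\1_n}$, the conjugate computation giving $\fr = H^*\circ(-\sqrt{\mC})$, and, for existence, the identical construction of lifting an optimal $\lambda^*$ to $\ml^* = \1_n\otimes\lambda^*$, exhibiting a subgradient of $\mph+\mh^*$ at $\ml^*$ whose blocks sum to zero so that it lies in $\Range{\sqrt{\mC}}$, and solving $\sqrt{\mC}\my^* = -v^*$. The one place you diverge is the final justification that this $\my^*$ actually minimizes $\fr$: the paper invokes the KKT characterization of \cite[Theorem 31.3]{rockafellar1970convex} applied to the primal--dual pair \cref{14450}/\cref{final_pro}, whereas you use \cref{conjugate_subdiff} to invert $-\sqrt{\mC}\my^*\in\partial H(\ml^*)$ into $\ml^*\in\partial H^*(-\sqrt{\mC}\my^*)$ and then the qualification-free inclusion $M\T\partial H^*(M\my)\subseteq\partial(H^*\circ M)(\my)$ to conclude $\0\in\partial\fr(\my^*)$. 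Your variant is slightly more elementary and, as you note, deliberately sidesteps any constraint qualification for $H^*$; the paper's route buys the same conclusion plus the explicit KKT system \cref{1450}--\cref{1507}, which it reuses later (e.g., in \cref{relation_solutions}). Two small points you should make explicit if you write this up: each $\phi_i$ is finite-valued (indeed $\phi_i = (f_i+g_i)^*\circ(-A_i\T)$ with $f_i+g_i$ strongly convex, so $\dom{\phi_i}=\R^p$), which is what licenses the applications of \cref{subdiff} both at the level of $\sum_i\phi_i + h^*$ and blockwise for $\mph+\mh^*$; and $H$ itself must be shown closed proper convex (the paper does this via \cite[Proposition B.2.1.1]{hiriart2004fundamentals}) so that \cref{conjugate_subdiff} applies to it.
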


\begin{proof}
  See \cref{appendix_1552}.
\end{proof}

According to \cref{1552}, we can observe that \cref{final_pro} is essentially an equivalent problem of \cref{main_pro}.
A natural question arising from \cref{1552} is whether the solution to \cref{main_pro} can be obtained based on that of \cref{final_pro}. Before addressing this question, we first investigate the properties of $\fr$, such as convexity and smoothness.
Based on the proof of \cref{1552} and \cite[Proposition B.2.1.4]{hiriart2004fundamentals}, it is straightforward to verify that $\fr$ is closed proper convex under \cref{G,convex}. The following two lemmas build upon the smoothness of the augmented Lagrangian dual for general linearly constrained optimization problems we established in \cite{li2025smooth}. They show that $\fr$ is $\frac{1}{\rho}$-smooth when $\rho>0$ and demonstrate how to calculate the gradient of $\fr$.
\begin{lemma} \label{2355}
  Assume that \cref{G,convex} hold, and $\rho>0$. Then: (1) $\dom{\fr} = \R^{np}$; (2) $\fr$ is convex and $\frac{1}{\rho}$-smooth.
\end{lemma}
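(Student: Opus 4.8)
The plan is to recognize \cref{final_pro} as the negative of the augmented Lagrangian dual function of the linearly constrained problem \cref{dual_pro_consensus}, and then apply the general smoothness machinery for such duals from \cite{li2025smooth}; the proof then reduces to verifying the hypotheses of that machinery and, for transparency, reproducing its core monotonicity argument. Concretely, writing $\Phi = \mph + \mh^*$, the augmented dual of \cref{dual_pro_consensus} with multiplier $\my$ is $d_\rho(\my) = \inf_{\ml}\set{\Phi(\ml) + \dotprod{\sqrt{\mC}\my, \ml} + \frac{\rho}{2}\ml\T\mC\ml} = -H^*(-\sqrt{\mC}\my) = -\fr(\my)$, using $\ml\T\mC\ml = \norm{\sqrt{\mC}\ml}^2$ and $\dotprod{\my, \sqrt{\mC}\ml} = \dotprod{\sqrt{\mC}\my, \ml}$. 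Thus it suffices to show this dual is finite-valued on all of $\R^{np}$, convex (as the negative of a concave dual), and $\frac{1}{\rho}$-smooth. I would first record that $\Phi$ is closed proper convex: $\mph$ is convex and finite-valued smooth because each $\phi_i(\lambda_i) = (f_i+g_i)^*(-A_i\T\lambda_i)$ is the composition of a smooth conjugate (\cref{conjugate_smooth}, since $f_i+g_i$ is $\mu_i$-strongly convex and closed proper) with a linear map, while $\mh^*$ is closed proper convex; convexity of $\fr$ is then immediate since $H^*$ is convex and $\my \mapsto -\sqrt{\mC}\my$ is linear.

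For (1), full domain, the task is to show $\inf_{\ml}\set{H(\ml) + \dotprod{\sqrt{\mC}\my, \ml}}$ is finite, and indeed attained, for every $\my$. I would decompose $\R^{np} = \Null{\mC} \oplus \mathrm{range}(\mC)$ and write $\ml = \mn + \mv$ with $\mn = \1_n \otimes w \in \Null{\mC}$ (using $\Null{\mC} = \Span{\1_n} \otimes \R^p$, a consequence of \cref{G}) and $\mv \in \mathrm{range}(\mC)$. On $\mathrm{range}(\mC)$ the augmentation contributes $\frac{\rho}{2}\mv\T\mC\mv \geq \frac{\rho}{2}\ue{\mC}\norm{\mv}^2$, which together with $\dotprod{\sqrt{\mC}\my, \ml} = \dotprod{\sqrt{\mC}\my, \mv}$ (since $\sqrt{\mC}\my \perp \Null{\mC}$) yields coercivity in the $\mv$-direction. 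Along the consensus direction the augmentation and linear term vanish, leaving $\Phi(\1_n \otimes w) = \sum_i \phi_i(w) + h^*(w)$, which is exactly the dual objective of \cref{dual_pro_eq}; since \cref{1552} guarantees that \cref{dual_pro_eq} and \cref{dual_pro_consensus} admit solutions and that strong duality holds, this controls the null-space direction and delivers a minimizer. This step is precisely where the general result of \cite{li2025smooth} is invoked, and I expect it to be the main obstacle: the quadratic augmentation alone provides no coercivity along $\Null{\mC}$, so finiteness and attainment there must be extracted from solvability of the constrained dual rather than from $H$ itself.

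For (2), $\frac{1}{\rho}$-smoothness, I would argue via co-coercivity. Once a minimizer $\ml^*(\my) \in \arg\min_{\ml}\set{H(\ml) + \dotprod{\sqrt{\mC}\my, \ml}}$ exists, its optimality condition reads $-\rho\mC\ml^*(\my) - \sqrt{\mC}\my \in \partial\Phi(\ml^*(\my))$, using $\partial H = \partial\Phi + \rho\mC(\cdot)$ (\cref{subdiff}). For two points $\my_1, \my_2$ with minimizers $\ml_1, \ml_2$, monotonicity of $\partial\Phi$ gives $\dotprod{-\rho\mC(\ml_1-\ml_2) - \sqrt{\mC}(\my_1-\my_2),\, \ml_1-\ml_2} \geq 0$, that is $\dotprod{\my_1-\my_2,\, \sqrt{\mC}(\ml_1-\ml_2)} \geq \rho\norm{\sqrt{\mC}(\ml_1-\ml_2)}^2$. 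Taking $\my_1 = \my_2$ forces $\sqrt{\mC}\ml_1 = \sqrt{\mC}\ml_2$, so $\sqrt{\mC}\ml^*(\my)$ is single-valued; hence $\partial\fr(\my) = \set{-\sqrt{\mC}\ml^*(\my)}$ is a singleton, which makes $\fr$ differentiable with $\nabla\fr(\my) = -\sqrt{\mC}\ml^*(\my)$. Substituting $\nabla\fr(\my_i) = -\sqrt{\mC}\ml_i$ into the inequality yields the co-coercivity estimate $\dotprod{\my_1-\my_2,\, \nabla\fr(\my_1)-\nabla\fr(\my_2)} \geq \rho\norm{\nabla\fr(\my_1)-\nabla\fr(\my_2)}^2$, and Cauchy--Schwarz then gives $\norm{\nabla\fr(\my_1)-\nabla\fr(\my_2)} \leq \frac{1}{\rho}\norm{\my_1-\my_2}$, establishing $\frac{1}{\rho}$-smoothness and completing the proof.
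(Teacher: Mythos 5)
Your proposal is correct and follows essentially the same route as the paper: the paper's entire proof consists of observing that $\mph+\mh^*$ is closed proper convex (already established in the proof of \cref{1552}) and then invoking \cite[Theorem 1]{li2025smooth} for the augmented Lagrangian dual of \cref{dual_pro_consensus}, which is exactly the machinery you identify, and your unpacking of it---the monotonicity/co-coercivity argument giving single-valuedness of $\sqrt{\mC}\ml^*(\my)$ and the $\frac{1}{\rho}$-Lipschitz bound on $-\sqrt{\mC}\ml^*(\my)$---is a correct reproduction of the standard proof of that result. The only caveat is that your sketch of domain-fullness and attainment is not self-contained (since $\mph+\mh^*$ does not separate across $\Null{\mC}\oplus\Range{\mC}$, the joint infimum cannot be controlled direction-by-direction as written), but you explicitly defer that step to the same cited result the paper relies on, so this is a presentational rather than a substantive gap.
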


\begin{proof}
  See \cref{appendix_2355}.
\end{proof}

\begin{lemma} \label{23551}
  Assume that \cref{G,convex} hold, and $\fr$ is differentiable everywhere. Define
  \eqe{
    \cT(\mz, \mx, \ml) =& f(\mx) + g(\mx) + \ml\T\A\mx \\
    &- \pa{\mh^*(\ml) + \frac{\rho}{2}\ml\T\mC\ml + \ml\T\mz}.
  }
  Then, for any $\my \in \R^{np}$, there exists at least one solution $(\mx^+, \ml^+) \in \R^d \times \R^{np}$ of the following saddle-point problem:
  \eqe{ \label{saddle_pro}
    \min_{\mx \in \R^d}\max_{\ml \in \R^{np}} \cT(\mz, \mx, \ml),
  }
  where $\mz = \sqrt{\mC}\my$. Furthermore, $\nabla \fr(\my) = -\sqrt{\mC}\ml^+$ holds for any solution $(\mx^+, \ml^+)$.
\end{lemma}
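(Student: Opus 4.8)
The plan is to exploit the conjugacy structure hidden in $\cT$, turning \cref{saddle_pro} into a Fenchel--Rockafellar primal--dual pair whose value is $\fr(\my)$. First I would eliminate $\mx$: since $\mph(\ml) = -\inf_{\mx}\pa{f(\mx)+g(\mx)+\ml\T\A\mx}$ (exactly how $\mph$ arises in \cref{1552}), a direct computation gives $\min_{\mx}\cT(\mz,\mx,\ml) = -\pa{H(\ml)+\ml\T\mz}$, whence $\max_{\ml}\min_{\mx}\cT(\mz,\mx,\ml) = \sup_{\ml}\pa{-\ml\T\mz - H(\ml)} = H^*(-\mz) = \fr(\my)$, using $\mz=\sqrt{\mC}\my$. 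Dually, eliminating $\ml$ first gives $\max_{\ml}\cT(\mz,\mx,\ml) = f(\mx)+g(\mx)+Q^*(\A\mx-\mz)$, where $Q(\ml)=\mh^*(\ml)+\frac{\rho}{2}\ml\T\mC\ml$. Thus \cref{saddle_pro} is the primal--dual pair associated with $\min_{\mx}\pa{f(\mx)+g(\mx)+Q^*(\A\mx-\mz)}$, and these identities hold for any $\rho\ge 0$ because $\mC\ge 0$ keeps $\cT$ concave in $\ml$.

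For existence I would treat the two sides separately and then assemble. On the primal side, $f+g$ is $\mu_f$-strongly convex by \cref{convex} and $Q^*\circ(\A\,\cdot-\mz)$ is convex, so $\mx\mapsto\max_{\ml}\cT(\mz,\mx,\ml)$ is strongly convex and coercive, hence attains a unique minimizer $\mx^+$. On the dual side I would invoke Fenchel--Rockafellar strong duality: the relative-interior qualification, traced back to Slater's condition (\cref{strong_duality}) and consistent with the existence already secured in \cref{1552}, yields both zero duality gap and attainment of the dual maximum, producing some $\ml^+\in\arg\max_{\ml}\pa{-\ml\T\mz-H(\ml)}$. Given zero gap and attainment on both sides, the standard chain $v=\min_{\mx}\cT(\mz,\mx,\ml^+)\le\cT(\mz,\mx^+,\ml^+)\le\max_{\ml}\cT(\mz,\mx^+,\ml)=v$ forces $(\mx^+,\ml^+)$ to satisfy the two saddle inequalities, so it solves \cref{saddle_pro}. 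I expect this existence step — specifically, establishing attainment of $\ml^+$ via the qualification condition — to be the main obstacle: the inner maximization need not be coercive, since $\mph$ is not strongly convex (it factors through $-\A\T$) and $\mC$ is singular, so $\ml^+$ is generally non-unique and attainment cannot come from coercivity alone. This is precisely where the hypotheses beyond bare convexity (Slater, and implicitly the properness/differentiability of $\fr$ from \cref{1552,2355}) do real work.

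Finally, for the gradient formula valid at \emph{every} solution, I would run the saddle inequalities backwards. For any solution $(\mx^+,\ml^+)$, they give $\min_{\mx}\cT(\mz,\mx,\ml)\le\cT(\mz,\mx^+,\ml)\le\cT(\mz,\mx^+,\ml^+)=\min_{\mx}\cT(\mz,\mx,\ml^+)$ for all $\ml$, so $\ml^+$ maximizes $\ml\mapsto-\pa{H(\ml)+\ml\T\mz}$; equivalently $\ml^+$ attains the supremum defining $H^*(-\mz)$, i.e.\ $-\mz\in\partial H(\ml^+)$, hence $\ml^+\in\partial H^*(-\mz)$ by \cref{conjugate_subdiff}. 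The subgradient inequality for $H^*$ at $-\mz$ with subgradient $\ml^+$ then gives, for every $\my'$, $\fr(\my')=H^*(-\sqrt{\mC}\my')\ge H^*(-\mz)+\dotprod{\ml^+,\sqrt{\mC}(\my-\my')}=\fr(\my)+\dotprod{-\sqrt{\mC}\ml^+,\my'-\my}$, using symmetry of $\sqrt{\mC}$ and $\mz=\sqrt{\mC}\my$. Thus $-\sqrt{\mC}\ml^+\in\partial\fr(\my)$; since $\fr$ is differentiable at $\my$ this subdifferential is the singleton $\set{\nabla\fr(\my)}$, yielding $\nabla\fr(\my)=-\sqrt{\mC}\ml^+$ for every solution. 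This closing argument is routine and robust; only the existence claim carries genuine difficulty.
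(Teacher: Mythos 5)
Your reduction of \cref{saddle_pro} to the Fenchel--Rockafellar pair ``primal $=(f+g)+Q^*(\A\cdot-\mz)$, dual $=-H-\langle\cdot,\mz\rangle$'' is the right structural picture, and your closing argument for the gradient formula is correct and, if anything, cleaner than the paper's: any saddle component $\ml^+$ attains the supremum defining $H^*(-\mz)$, hence $\ml^+\in\partial H^*(-\mz)$ by \cref{conjugate_subdiff}, and the subgradient inequality for $H^*$ together with the differentiability of $\fr$ forces $\nabla\fr(\my)=-\sqrt{\mC}\ml^+$ for every solution.

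The gap sits exactly where you predicted: dual attainment. You discharge it by invoking a relative-interior qualification ``traced back to Slater's condition'', but (i) \cref{strong_duality} is not a hypothesis of this lemma --- the only hypotheses are \cref{G,convex} and the everywhere-differentiability of $\fr$ --- and (ii) even granting Slater, it concerns the original data $(A,\dom{h})$ with $A=[A_1,\cdots,A_n]$, not the problem you need it for, which involves the block-diagonal operator $\A$, the regularized function $Q=\mh^*+\frac{\rho}{2}\langle\cdot,\mC\,\cdot\rangle$, and an arbitrary shift $\mz=\sqrt{\mC}\my$. You never verify that $\A\,\ri{\dom{g}}$ meets $\mz+\ri{\dom{Q^*}}$ for every $\my$, and this does not follow from \cref{strong_duality}; so the existence claim is asserted rather than proved. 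The paper takes a route that avoids any qualification on $h$ altogether: everywhere-differentiability gives $\dom{\fr}=\R^{np}$, the attainment of $\min_{\ml}\cL_{\rho}(\ml,\my)$ and the identity $\nabla\fr(\my)=-\sqrt{\mC}\ml^+$ are imported from \cite[Theorem 1]{li2025smooth}, the primal component is then constructed explicitly as $\mx^+=\nabla(f+g)^*(-\A\T\ml^+)$ using the $\tfrac{1}{\mu_f}$-smoothness of $(f+g)^*$, and the pair is checked against the saddle optimality conditions via \cite[Proposition 19.20]{bauschke2017convex}. To repair your proof you would either have to verify the qualification for the shifted, regularized pair, or derive attainment directly from the differentiability hypothesis as the paper does.
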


\begin{proof}
  See \cref{appendix_23551}.
\end{proof}

Using \cref{23551}, we can explore the relationship between the solutions of \cref{main_pro} and \cref{final_pro}.
\begin{lemma} \label{relation_solutions}
  Assume that \cref{G,strong_duality,convex} holds, $\fr$ is differentiable everywhere, and $\my^*$ is a solution of \cref{final_pro}. Then, there exists a solution $(\mx^*, \ml^*) \in \R^d \times \R^{np}$ of \cref{saddle_pro} with $\mz = \sqrt{\mC}\my^*$, where $\mx^*$ is the solution of \cref{main_pro}.
\end{lemma}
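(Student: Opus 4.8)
The plan is to turn the minimizer $\my^*$ of $\fr$ into a Karush--Kuhn--Tucker (KKT) pair $(\mx^*, \lambda^*)$ for the compact form $\min_{\mx} f(\mx) + g(\mx) + h(A\mx)$ of \cref{main_pro}, and then invoke sufficiency of the KKT conditions. First I would use that $\fr$ is convex and differentiable everywhere, so the optimality of $\my^*$ is equivalent to $\nabla \fr(\my^*) = \0$. Setting $\mz^* = \sqrt{\mC}\my^*$ and applying \cref{23551}, there is a solution $(\mx^*, \ml^*)$ of \cref{saddle_pro} with $\nabla \fr(\my^*) = -\sqrt{\mC}\ml^*$; combining the two gives $\sqrt{\mC}\ml^* = \0$. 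Since $\mC = C \otimes \I_p$ and $\Null{\sqrt{\mC}} = \Null{\mC}$, \cref{G} yields $\Null{\mC} = \set{\1_n\otimes v | v \in \R^p}$, so that $\ml^* = \1_n \otimes \lambda^*$ for some $\lambda^* \in \R^p$ (the dual variables reach consensus) and $\mC\ml^* = \0$.

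Next I would extract the first-order conditions of the saddle point. Because $(\mx^*, \ml^*)$ solves \cref{saddle_pro}, $\mx^*$ minimizes $\cT(\mz^*, \cdot, \ml^*)$ and $\ml^*$ maximizes $\cT(\mz^*, \mx^*, \cdot)$. Applying the sum rule of \cref{subdiff} to the $\mx$-minimization gives $\0 \in \nabla f(\mx^*) + \partial g(\mx^*) + \A\T\ml^*$; and since $\ml^* = \1_n\otimes\lambda^*$ implies $\A\T\ml^* = A\T\lambda^*$, this is exactly the stationarity condition $\0 \in \nabla f(\mx^*) + \partial g(\mx^*) + A\T\lambda^*$. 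The $\ml$-maximization, again via \cref{subdiff}, gives $\A\mx^* - \rho\mC\ml^* - \mz^* \in \partial \mh^*(\ml^*)$, which, using $\mC\ml^* = \0$, reduces to $\A\mx^* - \sqrt{\mC}\my^* \in \partial\mh^*(\ml^*)$.

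The crux---and the step I expect to be the main obstacle---is to aggregate this second inclusion across agents into a statement about $A\mx^* = \sum_{i=1}^n A_i x_i^*$. Writing $\mh^*(\ml) = \frac{1}{n}\sum_{i=1}^n h^*(\lambda_i)$ shows its subdifferential is block-separable with $i$-th block $\frac{1}{n}\partial h^*(\lambda^*)$; hence each $\R^p$-block of the inclusion reads $A_i x_i^* - [\sqrt{\mC}\my^*]_i \in \frac{1}{n}\partial h^*(\lambda^*)$. I would then sum over $i$, using two facts: $\sum_{i=1}^n [\sqrt{\mC}\my^*]_i = (\1_n\T\otimes\I_p)\sqrt{\mC}\my^* = \0$ because $\sqrt{C}\1_n = \0$ under \cref{G}, and the convexity of $\partial h^*(\lambda^*)$, which lets the resulting average of subgradients be reabsorbed into $\partial h^*(\lambda^*)$. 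This yields $A\mx^* \in \partial h^*(\lambda^*)$, equivalently $\lambda^* \in \partial h(A\mx^*)$ by \cref{conjugate_subdiff}.

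Finally I would combine the stationarity condition $\0 \in \nabla f(\mx^*) + \partial g(\mx^*) + A\T\lambda^*$ with $\lambda^* \in \partial h(A\mx^*)$: these are precisely the KKT conditions of the convex problem $\min_\mx f(\mx)+g(\mx)+h(A\mx)$. A short subgradient-inequality computation (adding the convexity inequalities for $f$, $g$, and $h$ at $\mx^*$, whose cross terms cancel) shows $\mx^*$ is a global minimizer, so $\mx^*$ solves \cref{main_pro}; strong convexity of $f$ under \cref{convex} further guarantees it is the unique one. The only delicate point beyond the aggregation step is checking that the sum rule of \cref{subdiff} applies in both subproblems, which it does because the nonsmooth term there ($g$ or $\mh^*$) is proper convex and the remaining terms are finite-valued convex (indeed smooth, or quadratic plus linear).
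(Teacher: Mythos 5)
Your proposal is correct and follows essentially the same route as the paper: both invoke \cref{23551} to obtain a saddle point with $\sqrt{\mC}\ml^* = \0$, deduce consensus $\ml^* = \1_n\otimes\lambda^*$ from $\Null{\sqrt{\mC}}$, and aggregate the $\ml$-optimality inclusion across blocks (the paper does this by left-multiplying with $\1_n\T\otimes\I_p$, which is exactly your summation argument using convexity of $\partial h^*(\lambda^*)$) to arrive at the KKT system for \cref{main_pro}. The only cosmetic difference is that you verify KKT sufficiency by a direct subgradient inequality, whereas the paper cites \cite[Theorem 31.3]{rockafellar1970convex}.
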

\begin{proof}
  See \cref{appendix_relation_solutions}.
\end{proof}

According to \cref{2355,relation_solutions}, two key facts about \cref{final_pro} are: (1) it is a smooth convex optimization problem when $\rho > 0$, and (2) solving \cref{final_pro} allows us to readily obtain the solution to \cref{main_pro} by solving the saddle-point problem \cref{saddle_pro}. Therefore, a natural approach to solving \cref{main_pro} is to address the unconstrained smooth convex optimization problem \cref{final_pro}. To solve the latter, Nesterov's accelerated gradient descent (AGD) \cite{nesterov2018lectures} can be employed to achieve the optimal convergence rate (provided that only first-order information is available), potentially leading to an accelerated algorithm for the original problem \cref{main_pro}.
Since \cref{final_pro} is the dual problem of an equivalent form of the dual problem of the original problem \cref{main_pro}, we refer to this approach as the dual$^2$ approach.

\subsection{Inexact Dual$^2$ Accelerated Gradient Method} \label{1442}
Building on the dual$^2$ approach, we now design accelerated algorithms to solve \cref{main_pro}.
As previously mentioned, our approach involves applying AGD to solve \cref{final_pro}, which probably lead to an accelerated algorithm for the original problem \cref{main_pro}.
In fact, AGD has many variants, one of which takes the following form when applied to \cref{final_pro}:
\eqe{ \label{2044}
  \my\+ &= \mv^k - \frac{1}{L_{\fr}}\nabla \fr(\mv^k), \\
  \mv\+ &= \my\+ + \beta_k\pa{\my\+-\my^k},
}
where $\beta_k$ is the variable or constant stepsize designed to maximize the convergence rate. For the convex case, various choices of $\beta_k$ exists; here, we adopt the one proposed in \cite{tseng2008accelerated}: $\beta_k = \frac{k}{k+3}, k \geq 0$ \footnote{This stepsize scheme corresponds to Equations (19) and (27) in \cite{tseng2008accelerated}, which is also used in \cite{schmidt2011convergence}. Note that the original stepsize scheme in \cite{schmidt2011convergence} is $\beta_k = \frac{k-1}{k+2}, k \geq 1$, which is equivalent to ours.}. When $\fr$ is strongly convex, a classical choice is $\beta_k = \frac{\sqrt{\kappa_{\fr}}-1}{\sqrt{\kappa_{\fr}}+1}$, as described in \cite[\S2.2.1, Constant Step scheme III]{nesterov2018lectures}.
By \cref{23551}, we can write \cref{2044} as
\eqe{ \label{2018}
  \mx^*(\mv^k), \ml^*(\mv^k) &= \arg\min_{\mx \in \R^d}\max_{\ml \in \R^{np}} \cT\pa{\sqrt{\mC}\mv^k, \mx, \ml}, \\
  \my\+ &= \mv^k + \frac{1}{L_{\fr}}\sqrt{\mC}\ml^*(\mv^k), \\
  \mv\+ &= \my\+ + \beta_k\pa{\my\+-\my^k}.
}
However, implementing \cref{2018} in practice is highly challenging, as it requires computing the exact solution to the saddle-point problem \cref{saddle_pro} at each iteration, which can be prohibitively expensive or even infeasible. To overcome this limitation, we replace the exact solution with an inexact solution to \cref{saddle_pro}, resulting in the following algorithm:
\eqe{ \label{analyze_form}
  \mx\+, \ml\+ &\approx \arg\min_{\mx \in \R^d}\max_{\ml \in \R^{np}} \cT\pa{\sqrt{\mC}\mv^k, \mx, \ml}, \\
  \my\+ &= \mv^k + \frac{1}{L_{\fr}}\sqrt{\mC}\ml\+, \\
  \mv\+ &= \my\+ + \beta_k\pa{\my\+-\my^k}.
}
However, the presence of $\sqrt{\mC}$ prevents the decentralized implementation of \cref{analyze_form}. Fortunately, this issue can be resolved by introducing two new variables: $\mw^k = \sqrt{\mC}\my^k$ and $\mz^k = \sqrt{\mC}\mv^k$. This leads to the final formulation of iD2A:
\eqe{ \label{iD2A}
  \mx\+, \ml\+ &\approx \arg\min_{\mx \in \R^d}\max_{\ml \in \R^{np}} \cT\pa{\mz^k, \mx, \ml}, \\
  \mw\+ &= \mz^k + \frac{1}{L_{\fr}}\mC\ml\+, \\
  \mz\+ &= \mw\+ + \beta_k\pa{\mw\+-\mw^k}.
}
We can readily verify that \cref{iD2A} is fully decentralized, owing to the decomposability of $f$, $g$, $\A$ and $\mh^*$, as well as the decentralized structure of $\mC$. The formal version of iD2A is presented in \cref{alg:iD2A}, and its decentralized implementation is detailed in \cref{appendix_decen_iD2A}.

\begin{remark}
  An important feature of the subproblem in iD2A, $\min_{\mx \in \R^d}\max_{\ml \in \R^{np}} \cT\pa{\mz^k, \mx, \ml}$, is that its first-order information, including $\nabla f$, $\prox_g$, $\A$, $\nabla \mh^*$/$\prox_{\mh^*}$, $\mC$, and $\mz^k$, is fully decentralized. This implies that solving $\min_{\mx \in \R^d}\max_{\ml \in \R^{np}} \cT\pa{\mz^k, \mx, \ml}$ in a decentralized manner can by achieved by applying any centralized first-order algorithm\footnote{Here, first-order algorithms refer to those that utilize only first-order information of $\cT\pa{\mz^k, \mx, \ml}$, such as those listed in \cref{appendix_oracle_complexity}.} designed for the general saddle-point problem \cref{general_saddle_pro}. The resulting subproblem-solving procedure is inherently decentralized.
\end{remark}

\subsection{Multi-Consensus Inexact Dual$^2$ Accelerated Gradient Method}
Assume that $\fr$ is $\mu_{\fr}$-strongly convex with $\mu_{\fr} > 0$. We will prove in \cref{converge_rate} that the outer iteration complexity of iD2A depends on $\kappa_{\fr}$, which in turn relies on $\kappa_C = \frac{\ove{C}}{\ue{C}}$. Consequently, reducing $\kappa_C$ can decrease the outer iteration complexity of iD2A.
One potential approach to achieve this is to construct a new gossip matrix with a smaller condition number than $C$. This can be accomplished using the Chebyshev acceleration technique \cite{auzinger2017iterative,arioli2014chebyshev}.

Consider the Chebyshev polynomials $T_k$:
\eqe{
&T_0(x) = 1, \ T_1(x) = x, \\
&T_{k+1}(x) = 2xT_k(x) - T_{k-1}(x), \ k \geq 1,
}
then define the following polynomial:
\eqe{
  P_K(x) = 1 - \frac{T_K(c_2(1-c_3x))}{T_K(c_2)},
}
where $K \geq 0$, $c_2 = \frac{\kappa_C+1}{\kappa_C-1}$, and $c_3 = \frac{2}{(1+1/\kappa_C)\ove{C}}$. Let $K = \lfloor \sqrt{\kappa_C} \rfloor$. Then, the matrix $P_K(C)$ satisfies $\kappa_{P_K(C)} = \frac{\ove{P_K(C)}}{\ue{P_K(C)}} \leq 4$, according to the proof of Theorem 4 in \cite{scaman2017optimal}.
Based on the definition of $P_K$ and \cref{G}, we can easily verify that $P_K(C)$ is symmetric, positive semidefinite, and shares the same null space as $C$. However, $P_K(C)$ lacks a decentralized nature and thus does not meet the first requirement of \cref{G}. Despite this, matrix-vector multiplication involving $P_K(C)$ can still be implemented in a decentralized manner using AcceleratedGossip proposed in \cite{scaman2017optimal}; see \cref{appendix_AcceleratedGossip}. Consequently, we obtain a new gossip matrix with a condition number of $\bO{1}$. Define
\eqe{
  \cT_{P_K(\mC)}(\mz, \mx, &\ml) = f(\mx) + g(\mx) + \ml\T\A\mx \\
  &- \pa{\mh^*(\ml) + \frac{\rho}{2}\ml\T P_K(\mC)\ml + \ml\T\mz}.
}
By the definition of $P_K$ and the properties of Kronecker product, we can easily verify that $P_K(C) \otimes \I_p = P_K(\mC)$.
Based on this, we derive a variant of iD2A, referred to as MiD2A:
\eqe{ \label{MiD2A}
  \mx\+, \ml\+ &\approx \arg\min_{\mx \in \R^d}\max_{\ml \in \R^{np}} \cT_{P_K(C)}(\mz^k, \mx, \ml), \\
  \mw\+ &= \mz^k + \frac{1}{L_{\fr}}P_K(\mC)\ml\+, \\
  \mz\+ &= \mw\+ + \beta_k\pa{\mw\+-\mw^k}.
}
Since $\kappa_{P_K(C)} \ll \kappa_{C}$, the outer iteration complexity of MiD2A is significantly lower than that of iD2A,making MiD2A a preferable choice in certain scenarios, which we will discuss in detail later.
However, a potential issue of MiD2A is that $P_K(\mC)\ml$ can only be implemented in a decentralized manner using AcceleratedGossip, which requires $\lfloor \sqrt{\kappa_C} \rfloor$ rounds of communication. The formal version of MiD2A is presented in \cref{alg:MiD2A}, and its decentralized implementation is detailed in \cref{appendix_decen_iD2A}.

\section{Convergence Analysis: Convergence Rates} \label{converge_rate}
In this section, we analyze the convergence of iD2A and MiD2A under two cases: (1) the general convex case ($\fr$ is convex), and (2) the strongly convex case ($\fr$ is strongly convex).

Recall that MiD2A is essentially a specific instance of iD2A, where the gossip matrix $C$ is replaced with $P_K(C)$. Notably, $P_K(C)$ retains the last three properties of $C$ as stated in \cref{G}. In fact, only these last three properties of $C$ in \cref{G} are utilized in the convergence analysis of iD2A. This implies that the convergence results of iD2A can be directly applied to MiD2A by substituting $C$ with $P_K(C)$. For simplicity, we will primarily focus on the analysis of iD2A. Readers should assume that the results obtained also apply to MiD2A unless stated otherwise.

\subsection{Convergence Under the General Convex Case}
For the general convex case, we analyze the convergence of iD2A and MiD2A under the following condition:
\begin{condition} \label{0129}
  $\fr$ is convex and $L_{\fr}$-smooth with $L_{\fr} > 0$.
\end{condition}

According to \cref{2355}, \cref{0129} trivially holds when \cref{convex,G} are satisfied and $\rho > 0$. Nevertheless, there are additional conditions that can also guarantee \cref{0129} while allowing for $\rho = 0$, which we will discuss later.

We first establish the error conditions that $\mx\+$ and $\ml\+$ should satisfy. Let $\mathbf{S}^*_{k+1} \subseteq \R^d \times \R^{np}$ be the solution set of \cref{inner_pro}, and define define $\mX^*_{k+1} = \lt\{\mx \in \R^d | (\mx, \ml) \in \mathbf{S}^*_{k+1}\rt\}$ and $\bm{\Lambda}^*_{k+1} = \lt\{\ml \in \R^{np} | (\mx, \ml) \in \mathbf{S}^*_{k+1}\rt\}$. Based on the initialization and the update rules of iD2A, we can easily verify that $\mz^k \in \Range{\mC}$ for $k \geq 0$. Since $\Range{\mC} = \Range{\sqrt{\mC}}$, it follows that $\mz^k \in \Range{\sqrt{\mC}}$ for $k \geq 0$. According to \cref{23551}, $\mathbf{S}^*_{k+1}$ must be nonempty for $k \geq 0$.
Recall that $f$ is strongly convex, then $\mX^*_{k+1}$ must be a singleton, denoted as $\mx^*_{k+1}$. However, $\bm{\Lambda}^*_{k+1}$ may contain multiple points. For $\mx\+$, we can directly apply the naive error condition:
\eqe{ \label{x_error_condition}
  \norm{\mx\+ - \mx^*_{k+1}}^2 \leq e_{\mx,k+1}.
}
Different from $\mx\+$, there exists a more natural error condition for $\ml\+$. According to \cref{0129}, we know that (1) $\sqrt{\mC}\ml$ is unique, and (2) $\nabla \fr(\my) = \sqrt{\mC}\ml^*_{k+1}$ for any $\ml^*_{k+1} \in \bm{\Lambda}^*_{k+1}$, even though $\bm{\Lambda}^*_{k+1}$ may contain multiple elements. Therefore, we adopt the following error condition for $\ml\+$:
\eqe{ \label{1551}
  \norm{\sqrt{\mC}\ml\+ - \sqrt{\mC}\ml^*_{k+1}} \leq \varepsilon_{k+1},
}
where $\ml^*_{k+1}$ is an arbitrary point in $\bm{\Lambda}^*_{k+1}$. According to \cref{23551}, the left-hand term in \cref{1551} naturally represents the error between the exact gradient of $\fr$ at $\my^k$ and the inexact gradient utilized in iD2A.

Instead of directly analyzing iD2A, we focus on its equivalent form---\cref{analyze_form}, which is more amenable to analysis. Due to the equivalence between iD2A and \cref{analyze_form}, the convergence of the latter ensures that of iD2A. Let $\my^*$ be any solution of \cref{final_pro}, and let $(\mx^*, \ml^*)$ be any solution of \cref{saddle_pro} with $\mz = \sqrt{\mC}\my^*$. According to \cref{relation_solutions}, $\mx^*$ is the solution of \cref{main_pro}. The convergence of iD2A is given in the following theorem.
\begin{theorem} \label{1717}
  Assume that \cref{strong_duality,G,convex,0129} hold, $\beta_k = \frac{k}{k+3}$, $\mx\+$ and $\ml\+$ meet the conditions \cref{x_error_condition} and \cref{1551} respectively, and
  \eqe{ \label{1644}
    \varepsilon_k = \bO{\frac{1}{k^{2+\delta}}}, \ \lim_{k \rightarrow \infty} e_{\mx,k} = 0,
  }
  where $\delta > 0$. Then, $\my^k$ and $\mx\+$ generated by iD2A satisfy that $\fr(\my^k) - \fr(\my^*) = \bO{\frac{1}{k^2}}$ and $\lim_{k \rightarrow \infty} \norm{\mx\+-\mx^*} = 0$.
\end{theorem}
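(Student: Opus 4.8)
The plan is to recognize that iD2A, in its equivalent form \cref{analyze_form}, is nothing but an inexact accelerated gradient method applied to the smooth convex problem \cref{final_pro}, and then to invoke a known convergence guarantee for such methods. Concretely, \cref{23551} identifies $-\sqrt{\mC}\ml^*_{k+1}$ as the exact gradient $\nabla\fr(\mv^k)$ (for any $\ml^*_{k+1} \in \bm{\Lambda}^*_{k+1}$), so the update $\my\+ = \mv^k + \frac{1}{L_{\fr}}\sqrt{\mC}\ml\+$ is an inexact gradient step whose gradient error $\mathbf{e}_k := \sqrt{\mC}(\ml^*_{k+1} - \ml\+)$ satisfies $\norm{\mathbf{e}_k} \leq \varepsilon_{k+1}$ by the error condition \cref{1551}. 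Under \cref{0129} the objective $\fr$ is convex and $L_{\fr}$-smooth, so I would apply the accelerated inexact (proximal-)gradient analysis of \cite{schmidt2011convergence} (with a trivial, exact proximal step and the momentum parameter $\beta_k = \frac{k}{k+3}$ of \cite{tseng2008accelerated}). Its guarantee yields $\fr(\my^k) - \fr(\my^*) = \bO{\frac{1}{k^2}}$ provided $\sum_{k} (k+1)\norm{\mathbf{e}_k} < \infty$; since $\varepsilon_k = \bO{\frac{1}{k^{2+\delta}}}$ gives $\sum_k (k+1)\varepsilon_{k+1} \leq \sum_k \bO{\frac{1}{k^{1+\delta}}} < \infty$, the first claim follows. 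This step is essentially bookkeeping once the gradient-error identification is in place.

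For the second claim I would write $\norm{\mx\+ - \mx^*} \leq \norm{\mx\+ - \mx^*_{k+1}} + \norm{\mx^*_{k+1} - \mx^*}$, where $\mx^*_{k+1}$ is the (unique, by strong convexity of $f$) $\mx$-part of the exact saddle solution of \cref{inner_pro} at $\mz^k = \sqrt{\mC}\mv^k$. The first term vanishes by \cref{x_error_condition} since $e_{\mx,k+1} \to 0$, so everything reduces to proving $\mx^*_{k+1} \to \mx^*$, with $\mx^*$ the unique solution of \cref{main_pro}. Denoting by $\mx^\star(\mz)$ the $\mx$-part of the saddle solution at a parameter $\mz \in \Range{\sqrt{\mC}}$ (well-defined and unique by strong convexity of $f$, cf. \cref{23551}), I would first establish that $\mz \mapsto \mx^\star(\mz)$ is continuous: writing the inner maximization over $\ml$ as a jointly convex, lower semicontinuous function $D(\mz, \mx)$, the minimizer $\mx^\star(\mz) = \arg\min_{\mx} f(\mx) + g(\mx) + D(\mz, \mx)$ is that of a family which is $\mu_f$-strongly convex in $\mx$ uniformly in $\mz$; a standard strong-convexity estimate then upgrades $\mz_j \to \mz$ into $\mx^\star(\mz_j) \to \mx^\star(\mz)$.

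It remains to pass from continuity of $\mx^\star(\cdot)$ to $\mx^*_{k+1} = \mx^\star(\mz^k) \to \mx^*$. Because \cref{0129} only provides convexity (not strong convexity) of $\fr$, the iterates $\my^k$ need not converge to a single minimizer, so I would not attempt to show $\mz^k \to \mz^*$ for a fixed optimizer. Instead I would use a subsequence argument together with the crucial fact --- supplied by \cref{relation_solutions} and the uniqueness of the solution of \cref{main_pro} --- that every optimal parameter $\mz^* = \sqrt{\mC}\my^*$ yields the \emph{same} value $\mx^\star(\mz^*) = \mx^*$. The steps are: (i) deduce boundedness of $\{\mz^k\}$ from the Lyapunov boundedness by-product of the accelerated analysis; (ii) show the momentum increment vanishes, $\mz^k - \mw^k = \beta_{k-1}(\mw^k - \mw^{k-1}) \to \0$, so that every subsequential limit of $\{\mz^k\}$ coincides with one of $\{\mw^k\}$ and, by continuity of $H^*$ together with $\fr(\my^k) = H^*(-\mw^k) \to \fr(\my^*)$, attains the optimal value of \cref{final_pro} and is thus an optimal parameter; (iii) conclude via continuity of $\mx^\star$ and the uniqueness fact that every subsequential limit of $\mx^*_{k+1}$ equals $\mx^*$, which with boundedness forces $\mx^*_{k+1} \to \mx^*$.

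The main obstacle is precisely steps (ii)--(iii): in the merely convex regime the accelerated iterates may fail to converge, so I cannot rely on iterate convergence of $\my^k$ (indeed $\beta_k = \frac{k}{k+3}$ is the critical case for which weak iterate convergence is not guaranteed). The argument must instead extract convergence of $\mx^*_{k+1}$ from three softer ingredients --- boundedness of the iterates, vanishing of the increment $\mw^k - \mw^{k-1}$, and convergence of function values --- and then crucially exploit that all optimal parameters are mapped by the continuous, strong-convexity-induced map $\mx^\star(\cdot)$ to the single point $\mx^*$. Establishing the vanishing-increment property and the uniform continuity estimate carefully, rather than deriving the rate in the first claim, is where the real work lies.
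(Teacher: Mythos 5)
Your treatment of the first claim coincides with the paper's: both view \cref{analyze_form} as an instance of the inexact accelerated (proximal-)gradient method of Schmidt et al., identify $\sqrt{\mC}\bigl(\ml^*_{k+1}-\ml^{k+1}\bigr)$ as the gradient error via \cref{23551} and \cref{1551}, and close with the summability $\sum_k k\varepsilon_k<\infty$ furnished by $\varepsilon_k=\bO{\frac{1}{k^{2+\delta}}}$ --- this is exactly \cref{1336}. Where you genuinely diverge is the second claim. The paper's proof is a one-line deduction: it asserts that the $\bO{\frac{1}{k^2}}$ value convergence ``implies that $\my^k$ converges to a solution of \cref{final_pro}, so does $\mv^k$,'' and then invokes \cref{relation_solutions}; it does not address the fact that, for merely convex $\fr$ and the critical momentum $\beta_k=\frac{k}{k+3}$, convergence of function values does not by itself yield convergence of iterates. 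You identify this explicitly and route around it with a subsequence argument: boundedness of $\{\mz^k\}$, vanishing increments $\mw^k-\mw^{k-1}\to\0$, value convergence plus continuity to show every cluster point of $\{\mz^k\}$ is an optimal parameter, continuity of the map $\mz\mapsto\mx^\star(\mz)$, and the key observation (from \cref{relation_solutions} and strong convexity of $f$) that every dual optimum is sent to the same $\mx^*$ --- which is exactly the right pivot. Two of your sub-steps still need to be discharged with care: (a) continuity of $\mx^\star(\cdot)$ cannot be borrowed from the estimate $\norm{\mx^\star(\mz)-\mx^\star(\mz')}\le\frac{\os(\A)}{\mu_f}\norm{\ml^\star(\mz)-\ml^\star(\mz')}$ used in the strongly convex analysis, since $\ml^\star$ need be neither unique nor continuous here, so your joint-convexity argument must also verify continuity of the marginal $\mz\mapsto D(\mz,\mx)$ at the relevant points; and (b) boundedness of the iterates and $\mw^k-\mw^{k-1}\to\0$ for $\beta_k=\frac{k}{k+3}$ must be extracted from the Lyapunov analysis rather than read off \cref{1336}. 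Neither detail is supplied by the paper either, so your proposal is the more careful account of what actually has to be proved, at the cost of leaving those two lemmas to be written out.
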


\begin{proof}
  See \cref{appendix_1717}.
\end{proof}
Note that \cref{x_error_condition} is not strictly required theoretically before the final iteration; however, we recommend incorporating it into every iteration for improved implementation performance.

As mentioned before, $\rho > 0$ is sufficient to guarantee \cref{0129} when \cref{convex,G} hold. However, there are other sufficient conditions, as demonstrated in \cref{F_rho_smooth}. This implies that we can also set $\rho=0$ if the other conditions in \cref{F_rho_smooth} are satisfied, we will discuss the advantages of setting $\rho = 0$ later.
\begin{lemma} \label{F_rho_smooth}
  Assume that \cref{G,convex} hold, \cref{0129} holds if at least one of the following conditions is met:
  \begin{enumerate}
    \item $\rho > 0$; in this case $L_{\fr} = \frac{1}{\rho}$.
    \item $h^*$ is $\mu_{h^*}$-strongly convex with $\mu_{h^*} > 0$; in this case $L_{\fr} = \frac{1}{\max\pa{\rho, \frac{\mu_{h^*}}{n\ove{C}}}}$.
    \item $g_i = 0$, and $A_i$ has full row rank, $i \in \sI$; in this case $L_{\fr} = \frac{1}{\max\pa{\rho, \frac{1}{\ove{C}}\min_{i \in \sI}\frac{\mins^2(A_i)}{L_i}}}$.
  \end{enumerate}
\end{lemma}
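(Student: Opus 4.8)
The plan is to read off smoothness of $\fr$ from \emph{strong convexity of $H$} through the representation $\fr(\my)=H^*(-\sqrt{\mC}\my)$ in \cref{1552}, together with the conjugacy duality of \cref{conjugate_smooth}. Once $H$ is shown to be closed proper and $m$-strongly convex for some $m>0$, \cref{conjugate_smooth} gives that $H^*$ is convex and $\frac{1}{m}$-smooth; since $\nabla\fr(\my)=-\sqrt{\mC}\,\nabla H^*(-\sqrt{\mC}\my)$, the chain rule yields
\eqe{
  \norm{\nabla\fr(\my)-\nabla\fr(\my')} \leq \os^2(\sqrt{\mC})\frac{1}{m}\norm{\my-\my'} = \frac{\ove{C}}{m}\norm{\my-\my'},
}
using $\os^2(\sqrt{\mC})=\ove{C}$. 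Combined with the convexity of $\fr$ already noted before \cref{2355}, this shows \cref{0129} holds with $L_{\fr}=\frac{\ove{C}}{m}$. The $\max(\rho,\cdot)$ structure of the stated constants then arises by keeping the \emph{smaller} of two valid upper bounds on the smoothness modulus: the augmentation bound $\frac{1}{\rho}$ from \cref{2355} (valid whenever $\rho>0$) and the strong-convexity bound $\frac{\ove{C}}{m}$ above (valid for every $\rho\geq 0$), giving $L_{\fr}=\frac{1}{\max(\rho,\,m/\ove{C})}$. Case 1 is exactly \cref{2355}.

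For Cases 2 and 3 I would first write $H=\Phi+\frac{\rho}{2}\ml\T\mC\ml$ with $\Phi=\mph+\mh^*$, note that $\frac{\rho}{2}\ml\T\mC\ml$ is convex since $\mC\geq 0$, and rewrite $\phi_i(\lambda)=(f_i+g_i)^*(-A_i\T\lambda)$ directly from the definition of $\phi_i$ as a Fenchel conjugate. Because $f_i$ is strongly convex, $(f_i+g_i)^*$ has full domain (\cref{conjugate_smooth}), so $\Phi$ is finite on $\dom{\mh^*}$ and $H$ is closed proper convex; moreover $H$ inherits the strong-convexity modulus of $\Phi$, so only that modulus $m$ is at issue. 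For Case 2, I would show $\mh^*=\frac{1}{n}\sum_i h^*(\lambda_i)$ is $\frac{\mu_{h^*}}{n}$-strongly convex (a separable sum of $\frac{\mu_{h^*}}{n}$-strongly convex blocks, so $\mh^*-\frac{\mu_{h^*}}{2n}\norm{\cdot}^2$ is a sum of convex functions); since $\mph$ is convex, $m=\frac{\mu_{h^*}}{n}$ and the strong-convexity bound is $\frac{\ove{C}}{m}=\frac{1}{\mu_{h^*}/(n\ove{C})}$, which combined with Case 1 gives the claimed $L_{\fr}$.

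Case 3 is the main obstacle and carries the only nonroutine step. With $g_i=0$ we have $\phi_i(\lambda)=f_i^*(-A_i\T\lambda)$, and the required strong convexity must come from $\mph$ since $h$ is general. By \cref{conjugate_strong_convex}, the $L_i$-smoothness of $f_i$ makes $f_i^*$ be $\frac{1}{L_i}$-strongly convex on $\dom{\partial f_i^*}$; because $f_i$ is additionally strongly convex, \cref{conjugate_smooth} gives $\dom{\partial f_i^*}=\R^{d_i}$, so this strong convexity is global. The delicate step is transferring it through the linear map $-A_i\T$: writing the strong-convexity inequality for $f_i^*$ at $-A_i\T\lambda$ and $-A_i\T\lambda'$ and using $\norm{A_i\T v}^2=v\T A_iA_i\T v\geq \mins^2(A_i)\norm{v}^2$---which holds precisely because the full row rank of $A_i$ forces $\mins(A_i)>0$---shows $\phi_i$ is $\frac{\mins^2(A_i)}{L_i}$-strongly convex. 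Taking the minimum over the (separable) blocks gives $m=\min_{i\in\sI}\frac{\mins^2(A_i)}{L_i}$ for $\mph$, hence for $\Phi$ and $H$, so the strong-convexity bound is $\frac{\ove{C}}{m}=\frac{1}{\frac{1}{\ove{C}}\min_{i\in\sI}\mins^2(A_i)/L_i}$; combining with Case 1 yields the stated constant. The crux is to place the full-row-rank hypothesis exactly where positivity of $m$ is needed and to confirm that the strong-convexity modulus survives both the conjugation and the linear composition with the correct constants.
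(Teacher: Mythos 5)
Your proposal is correct and follows essentially the same route as the paper's proof: Case 1 is delegated to \cref{2355}, and for Cases 2 and 3 you establish strong convexity of $H$ (via $\frac{\mu_{h^*}}{n}$-strong convexity of $\mh^*$, respectively $\frac{\mins^2(A_i)}{L_i}$-strong convexity of $\phi_i=(f_i+g_i)^*(-A_i\T\cdot)$ using \cref{conjugate_strong_convex} and the full-row-rank bound $\norm{A_i\T v}^2\geq\mins^2(A_i)\norm{v}^2$), then transfer it to $\frac{\ove{C}}{\mu_H}$-smoothness of $\fr=H^*(-\sqrt{\mC}\,\cdot)$ and take the better of that bound and the $\frac{1}{\rho}$ bound. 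The only cosmetic difference is that you bound $\norm{\nabla\fr(\my)-\nabla\fr(\my')}$ directly by the chain rule where the paper uses the equivalent cocoercivity-style inner-product estimate; the constants agree.
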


\begin{proof}
  See \cref{appendix_F_rho_smooth}.
\end{proof}

\begin{remark}
  We now examine the conditions under which $h^*$ is strongly convex. We have shown in the proof of \cref{1552} that $h^*$ is closed proper convex under \cref{convex}, hence $\dom{h^*}$ is convex. Recall the definition of the strong convexity for extended-valued functions in \cref{convex_analysis}, then $h^*$ is $\mu_{h^*}$-strongly convex if $h^*$ is $\mu_{h^*}$-strongly convex on $\dom{h^*}$. According to \cref{conjugate_strong_convex}, we can obtain a sufficient condition for $h^*$ to be strongly convex on $\dom{h^*}$: $h$ is $L$-smooth with $L > 0$, and $\dom{\partial h^*} = \dom{h^*}$, where the latter means that $\partial h^*(\lambda) \neq \emptyset$ for any $\lambda \in \dom{h^*}$. This condition is naturally satisfied in many practical problems where $h$ is smooth, such as linear regression, Huber regression, logistic regression, and Poisson regression. For further discussions on these regression problems, see \cref{appendix_applications}.3.
\end{remark}

\subsection{Convergence Rates Under the Strongly Convex Case}
For the strongly convex case, we analyze the convergence of iD2A and MiD2A under the following condition:
\begin{condition} \label{F_rho_convex_smooth}
  $\fr$ is $\mu_{\fr}$-strongly convex on $\Range{\sqrt{\mC}}$ and $L_{\fr}$-smooth for $L_{\fr} \geq \mu_{\fr} > 0$.
\end{condition}

In this subsection, we introduce two additional assumptions:
\begin{assumption} \label{hs_smooth}
  $h^*$ is $L_{h^*}$-smooth for $L_{h^*} \geq 0$.
\end{assumption}

\begin{assumption} \label{1643}
  At least one of the following conditions holds:
  \begin{enumerate}
    \item $h^*$ is $\mu_{h^*}$-strongly convex with $\mu_{h^*} > 0$;
    \item $g_i = 0$ and $A_i$ has full row rank, $i \in \sI$;
    \item $g_i = 0$ and $A = [A_1, \cdots, A_n]$ has full row rank, and $\rho > 0$.
  \end{enumerate}
\end{assumption}

\begin{remark}
  Note that \cref{convex} guarantees that $h$ is closed proper convex, then there are two typical cases that satisfy \cref{hs_smooth}: (2) $h$ is $\mu_h$-strongly convex for $\mu_h > 0$; in this case $L_{h^*} = \frac{1}{\mu_h} > 0$. (2) $h$ is the indicator function of a singleton set; in this case $h^*$ is a linear function and $L_{h^*} = 0$.
\end{remark}

The following lemma shows that \cref{F_rho_convex_smooth} holds under \cref{G,convex,hs_smooth,1643}.
\begin{lemma} \label{F_rho_convex}
  Assume that \cref{G,convex,hs_smooth,1643} hold. Then, $H$ is $\mu_H$-strongly convex and $L_H$-smooth, and \cref{F_rho_convex_smooth} holds, where $L_H = \max_{i \in \sI}\frac{\os^2(A_i)}{\mu_i} + \rho\ove{C}+\frac{L_{h^*}}{n}$, $\mu_{\fr} = \frac{\ue{C}}{L_H}$, and $L_{\fr} = \frac{1}{\max\pa{\rho, \frac{\mu_H}{\ove{C}}}}$. Specifically, $\mu_H$ varies with different cases:
  \begin{enumerate}
    \item $h^*$ is $\mu_{h^*}$-strongly convex with $\mu_{h^*} > 0$; in this case $\mu_H = \frac{\mu_{h^*}}{n}>0$.
    \item $g_i = 0$ and $A_i$ has full row rank, $i \in \sI$; in this case
          $$\mu_H = \max\pa{\min_{i \in \sI}\frac{\mins^2(A_i)}{L_i}, \frac{\mine{\A\A\T + \rho L_f\mC}}{L_f}}>0.$$
    \item $g_i = 0$ and $A = [A_1, \cdots, A_n]$ has full row rank, and $\rho > 0$; in this case $\mu_H = \frac{\mine{\A\A\T + \rho L_f\mC}}{L_f}>0$.
  \end{enumerate}
\end{lemma}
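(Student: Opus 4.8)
The plan is to first establish that $H$ is $\mu_H$-strongly convex and $L_H$-smooth directly from its definition $H(\ml) = \mph(\ml) + \mh^*(\ml) + \frac{\rho}{2}\ml\T\mC\ml$, and then transfer both properties to $\fr$ using the identity $\fr(\my) = H^*(-\sqrt{\mC}\my)$ from \cref{1552} together with the conjugate dualities in \cref{conjugate_smooth,conjugate_strong_convex}.

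For the smoothness of $H$, I would first rewrite each local dual component as a conjugate composed with a linear map: since $\phi_i(\lambda) = \sup_{x_i}\pa{(-A_i\T\lambda)\T x_i - (f_i+g_i)(x_i)} = (f_i+g_i)^*(-A_i\T\lambda)$, we have $\mph(\ml) = \sum_{i=1}^n (f_i+g_i)^*(-A_i\T\lambda_i)$. Under \cref{convex}, $f_i+g_i$ is closed proper and $\mu_i$-strongly convex, so \cref{conjugate_smooth} gives that $(f_i+g_i)^*$ is $\frac{1}{\mu_i}$-smooth; composing with the linear map $-A_i\T$ (of norm $\os(A_i)$) shows $\phi_i$ is $\frac{\os^2(A_i)}{\mu_i}$-smooth, and block separability makes $\mph$ be $\max_{i\in\sI}\frac{\os^2(A_i)}{\mu_i}$-smooth. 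Adding the $\frac{L_{h^*}}{n}$-smoothness of $\mh^*$ (\cref{hs_smooth}) and the $\rho\ove{C}$-smoothness of the quadratic term $\frac{\rho}{2}\ml\T\mC\ml$ (whose top eigenvalue is $\rho\ove{C}$) yields the claimed $L_H$.

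For the strong convexity of $H$, I would argue case by case, noting that $\mh^*$ and $\frac{\rho}{2}\ml\T\mC\ml$ are always convex so any single summand can carry the strong convexity. In Case 1, the strongly convex branch of \cref{1643} makes $\mh^*$ be $\frac{\mu_{h^*}}{n}$-strongly convex, giving $\mu_H = \frac{\mu_{h^*}}{n}$. In Cases 2 and 3, $g_i=0$ gives $\phi_i(\lambda)=f_i^*(-A_i\T\lambda)$, and \cref{conjugate_strong_convex} makes $f_i^*$ be $\frac{1}{L_i}$-strongly convex on $\R^{d_i}$. I would then use two lower bounds. First, when $A_i$ has full row rank the map $-A_i\T$ is injective with smallest singular value $\mins(A_i)>0$, so the composition rule gives that $\phi_i$ is $\frac{\mins^2(A_i)}{L_i}$-strongly convex, yielding the bound $\min_{i}\frac{\mins^2(A_i)}{L_i}$ (valid in Case 2). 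Second, a uniform bound: each $\phi_i(\lambda)-\frac{1}{2L_f}\lambda\T A_iA_i\T\lambda$ is convex (since $f_i^*$ is $\frac{1}{L_i}$-strongly convex and $\frac{1}{L_i}\geq\frac{1}{L_f}$), so $\mph(\ml)-\frac{1}{2L_f}\ml\T\A\A\T\ml$ is convex, where $\A\A\T=\diag{A_1A_1\T,\dots,A_nA_n\T}$; since $\frac{1}{2L_f}\ml\T(\rho L_f\mC)\ml$ is exactly the quadratic term of $H$, it follows that $H(\ml)-\frac{1}{2L_f}\ml\T(\A\A\T+\rho L_f\mC)\ml$ is convex, giving the bound $\frac{\mine{\A\A\T+\rho L_f\mC}}{L_f}$. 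Taking the maximum of the two valid bounds produces the stated $\mu_H$.

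The step I expect to be the main obstacle is verifying $\mu_H>0$, in particular $\mine{\A\A\T+\rho L_f\mC}>0$ in Case 3 where the individual $A_i$ may be rank-deficient. I would argue by contradiction on the null space: if $v\T(\A\A\T+\rho L_f\mC)v=0$ with $\rho>0$, then since both $\A\A\T$ and $\mC$ are positive semidefinite we must have $v\T\mC v=0$ and $v\T\A\A\T v=0$ separately; the first forces $v\in\Null{\mC}=\Span{\1_n}\otimes\R^p$ by \cref{G}, so all $p$-blocks of $v$ equal a common $u\in\R^p$, and substituting into the second together with $\sum_{i=1}^n A_iA_i\T = AA\T$ gives $u\T AA\T u=0$, which forces $u=0$ (hence $v=0$) since $A$ has full row rank. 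In Case 2 positivity is immediate because each $A_iA_i\T$ is positive definite, so $\A\A\T$ is positive definite. Finally, to obtain \cref{F_rho_convex_smooth} I would transfer the properties of $H$ to $\fr$: \cref{conjugate_smooth,conjugate_strong_convex} give that $H^*$ is $\frac{1}{\mu_H}$-smooth and $\frac{1}{L_H}$-strongly convex on $\R^{np}$, and composing with $-\sqrt{\mC}$ (of norm $\sqrt{\ove{C}}$ and smallest nonzero singular value $\sqrt{\ue{C}}$ on $\Range{\sqrt{\mC}}$) shows $\fr$ is $\frac{\ove{C}}{\mu_H}$-smooth and $\frac{\ue{C}}{L_H}$-strongly convex on $\Range{\sqrt{\mC}}$, so $\mu_{\fr}=\frac{\ue{C}}{L_H}$; intersecting the smoothness estimate with the $\frac{1}{\rho}$-smoothness from \cref{2355} gives $L_{\fr}=\frac{1}{\max\pa{\rho,\frac{\mu_H}{\ove{C}}}}$, completing the proof.
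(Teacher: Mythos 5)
Your proposal is correct and follows essentially the same route as the paper's proof: smoothness of $H$ via the conjugate/linear-composition bounds on $\mph$ and $\mh^*$, case-by-case strong convexity of $H$ (with your ``subtract a quadratic'' phrasing being equivalent to the paper's gradient-monotonicity inequalities), and transfer to $\fr$ through $H^*$ composed with $-\sqrt{\mC}$ restricted to $\Range{\sqrt{\mC}}$. The only substantive difference is that you prove $\mine{\A\A\T+\rho L_f\mC}>0$ directly by a null-space argument, whereas the paper cites \cite[Lemma 4]{li2024npga} for this fact; your argument is valid and self-contained.
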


\begin{proof}
  See \cref{appendix_F_rho_convex}.
\end{proof}

Based on the KKT conditions for \cref{final_pro}, as detailed in the proof of \cref{1552}, we can verify that a solution to \cref{final_pro} exists in $\Range{\sqrt{\mC}}$ by projecting any solution onto the orthogonal subspaces $\Range{\sqrt{\mC}}$ and $\Null{\sqrt{\mC}}$.
Furthermore, by \cref{F_rho_convex}, $\fr$ is strongly convex on $\Range{\sqrt{\mC}}$ under \cref{G,convex,hs_smooth,1643}. Thus, \cref{final_pro} has a unique solution in $\Range{\sqrt{\mC}}$, which we denote as $\my^*$. Additionally, since both $f$ and $H$ are strongly convex, the solution of \cref{saddle_pro} with $\mz = \sqrt{\mC}\my^*$ must also be unique, denoted as $(\mx^*, \ml^*)$. Since $H$ is strongly convex, $\bm{\Lambda}^*_{k+1}$ must also be a singleton, denoted as $\ml^*_{k+1}$. Then, \cref{1551} can be guaranteed by
\eqe{ \label{lambda_error_condition}
  \norm{\ml\+-\ml_{k+1}^*}^2 \leq \frac{\varepsilon_{k+1}^2}{\ove{C}} = e_{\ml,k+1}.
}
The following theorem demonstrates the linear convergence of $\mx\+$ generated by iD2A.
\begin{theorem} \label{outer_convergence_SC}
  Assume that \cref{strong_duality,G,convex,hs_smooth,1643} hold, $\mx\+$ and $\ml\+$ meet the conditions \cref{x_error_condition} and \cref{lambda_error_condition} respectively, and
  \eqe{ \label{2333}
    e_{\mx,k+1} = \theta_{\mx} e_{\mx,k}, \ e_{\ml,k+1} = \theta_{\ml} e_{\ml,k},
  }
  where $e_{\mx,0}, e_{\ml,0} > 0$, $\theta_{\mx}$, $\theta_{\ml} \in \pa{0, 1}$. Then $\mx\+$ generated by iD2A satisfies that $\norm{\mx\+-\mx^*}^2$ converges as
  \begin{enumerate}
    \item $\mO\lt(\max\lt(1-\frac{1}{\sqrt{\kappa_{\fr}}}, \theta_{\ml}, \theta_{\mx}\rt)^k\rt)$ if $\theta_{\ml} \neq 1-\frac{1}{\sqrt{\kappa_{\fr}}}$;
    \item $\mO\lt(\max\pa{k^2\lt(1-\frac{1}{\sqrt{\kappa_{\fr}}}\rt)^k, \theta_{\mx}^k}\rt)$ if $\theta_{\ml} = 1-\frac{1}{\sqrt{\kappa_{\fr}}}$.
  \end{enumerate}
\end{theorem}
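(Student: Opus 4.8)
The plan is to regard iD2A as an inexact accelerated gradient (AGD) method applied to the smooth, strongly convex surrogate \cref{final_pro}, and to split the target quantity $\norm{\mx\+-\mx^*}^2$ into an \emph{outer} error (convergence of the AGD iterates toward $\my^*$) and an \emph{inner} error (perturbation of the exact saddle-point solution plus the primal inexactness). I would work on the equivalent form \cref{analyze_form}. By \cref{F_rho_convex}, under \cref{G,convex,hs_smooth,1643} the map $\fr$ is $\mu_{\fr}$-strongly convex on $\Range{\sqrt{\mC}}$ and $L_{\fr}$-smooth; since $\my^0=\0$ and every update adds a vector in $\Range{\sqrt{\mC}}$, the iterates $\my^k,\mv^k$ and the minimizer $\my^*$ all stay in $\Range{\sqrt{\mC}}$, so restricted to this subspace the problem is genuinely strongly convex with condition number $\kappa_{\fr}$. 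By \cref{23551} the exact gradient is $\nabla\fr(\mv^k)=-\sqrt{\mC}\ml^*_{k+1}$ while the gradient actually used is $-\sqrt{\mC}\ml\+$, so the gradient error equals $\norm{\sqrt{\mC}(\ml\+-\ml^*_{k+1})}\le\varepsilon_{k+1}$ by \cref{1551}; combining \cref{lambda_error_condition} with \cref{2333} gives $\varepsilon_{k+1}=\sqrt{\ove{C}e_{\ml,0}}\,\theta_{\ml}^{(k+1)/2}$, i.e.\ the gradient error decays like $\theta_{\ml}^{k/2}$.

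Next I would invoke an inexact-AGD convergence estimate for the strongly convex case, via the standard potential-function analysis along the lines of \cite{schmidt2011convergence}. Such a bound takes the form $\sqrt{V_k}\le r^{k/2}\pa{\sqrt{V_0}+c\sum_{j=1}^{k}r^{-j/2}\varepsilon_j}$ with $r=1-\tfrac{1}{\sqrt{\kappa_{\fr}}}$ and $V_k$ comparable to $\fr(\my^k)-\fr(\my^*)$, hence to $\norm{\my^k-\my^*}^2$ through $\mu_{\fr}$-strong convexity on $\Range{\sqrt{\mC}}$. Substituting $\varepsilon_j\propto\theta_{\ml}^{j/2}$ turns the accumulation into the geometric sum $\sum_j(\theta_{\ml}/r)^{j/2}$, which is $\mO(1)$ when $\theta_{\ml}<r$, $\mO\pa{(\theta_{\ml}/r)^{k/2}}$ when $\theta_{\ml}>r$, and $\mO(k)$ when $\theta_{\ml}=r$; squaring yields $\norm{\mv^k-\my^*}^2=\mO(\max(r,\theta_{\ml})^k)$ for $\theta_{\ml}\neq r$ and $\mO(k^2r^k)$ for $\theta_{\ml}=r$. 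Passing from the main sequence $\my^k$ to the extrapolated $\mv^k$ costs only a constant, since $\mv^k$ is an affine combination of $\my^k,\my^{k-1}$ with bounded $\beta_k\in(0,1)$.

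Finally I would transfer this to $\mx\+$ through $\norm{\mx\+-\mx^*}\le\norm{\mx\+-\mx^*_{k+1}}+\norm{\mx^*_{k+1}-\mx^*}$. The first term is at most $\sqrt{e_{\mx,k+1}}\propto\theta_{\mx}^{(k+1)/2}$ by \cref{x_error_condition} and \cref{2333}. For the second term I would establish Lipschitz continuity of the exact saddle-point map in $\mz$: from the optimality conditions the dual solution is $\ml^*(\mz)=\nabla H^*(-\mz)$, which is $\tfrac{1}{\mu_H}$-Lipschitz because $H$ is $\mu_H$-strongly convex (by \cref{F_rho_convex}), while the primal solution is $\mx^*(\mz)=\nabla(f+g)^*(-\A\T\ml^*(\mz))$, which is $\tfrac{1}{\mu_f}$-Lipschitz in $-\A\T\ml^*$; composing and using $\mz^k=\sqrt{\mC}\mv^k$ gives $\norm{\mx^*_{k+1}-\mx^*}\le\tfrac{\os(\A)\sqrt{\ove{C}}}{\mu_f\mu_H}\norm{\mv^k-\my^*}$. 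Squaring and using $(a+b)^2\le 2a^2+2b^2$ then produces $\norm{\mx\+-\mx^*}^2=\mO(\theta_{\mx}^k)+\mO(\norm{\mv^k-\my^*}^2)$, which, after inserting the outer estimate, gives exactly $\mO(\max(r,\theta_{\ml},\theta_{\mx})^k)$ when $\theta_{\ml}\neq r$ and $\mO(\max(k^2r^k,\theta_{\mx}^k))$ when $\theta_{\ml}=r$.

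I expect the crux to be the inexact-AGD error bookkeeping: proving the potential-function recursion that isolates the gradient error $\varepsilon_j$ with the correct exponent, and then evaluating the geometric sum $\sum_j(\theta_{\ml}/r)^{j/2}$ sharply enough to recover the boundary behavior $k^2r^k$ at $\theta_{\ml}=r$. The Lipschitz-perturbation step for the solution map is routine once the identities $\ml^*(\mz)=\nabla H^*(-\mz)$ and $\mx^*(\mz)=\nabla(f+g)^*(-\A\T\ml^*(\mz))$ are in hand, as is the reduction to $\Range{\sqrt{\mC}}$.
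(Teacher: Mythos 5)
Your proposal is correct and follows essentially the same route as the paper: the paper likewise treats iD2A (in its equivalent form) as an inexact AGD on $\fr$, invokes the Schmidt--Le Roux--Bach strongly convex inexact-AGD bound restricted to $\Range{\sqrt{\mC}}$, evaluates the weighted error sum $\sum_j (\theta_{\ml}/(1-1/\sqrt{\kappa_{\fr}}))^{j/2}$ in the three regimes to get the $\max(\cdot)^k$ and $k^2(\cdot)^k$ rates, and transfers to $\mx\+$ via the same Lipschitz chain $\ml^*(\mz)=\nabla H^*(-\mz)$ and $\norm{\mx^*_{k+1}-\mx^*}\le\frac{\os(\A)}{\mu_f}\norm{\ml^*_{k+1}-\ml^*}$ plus the extrapolation bound on $\norm{\mv^k-\my^*}$. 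No gaps worth noting.
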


\begin{proof}
  See \cref{appendix_outer_convergence_SC}.
\end{proof}

As demonstrated in \cref{outer_convergence_SC}, the convergence rate of iD2A is governed by $\kappa_{\fr}$, the condition number of $\fr$. Therefore, it is essential to determine the value of $\kappa_{\fr}$.
\begin{lemma} \label{kappa_F}
  Assume that \cref{G,convex,hs_smooth,1643} hold. Then:
  \begin{enumerate}
    \item if $h^*$ is $\mu_{h^*}$-strongly convex with $\mu_{h^*} > 0$, $\kappa_{\fr} = \frac{\max_{i \in \sI}\frac{\os^2(A_i)}{\mu_i} +\frac{L_{h^*}}{n}+ \rho\ove{C}}{\max\pa{\rho\ove{C}, \frac{\mu_{h^*}}{n}}}\frac{\ove{C}}{\ue{C}}$;
    \item if $g_i = 0$ and $A_i$ has full row rank, $i \in \sI$, $\kappa_{\fr} = \frac{\max_{i \in \sI}\frac{\os^2(A_i)}{\mu_i} +\frac{L_{h^*}}{n}+ \rho\ove{C}}{\max\pa{\rho\ove{C}, \min_{i \in \sI}\frac{\mins^2(A_i)}{L_i}, \frac{\mine{\A\A\T + \rho L_f\mC}}{L_f}}}\frac{\ove{C}}{\ue{C}}$;
    \item if $g_i = 0$ and $A = [A_1, \cdots, A_n]$ has full row rank\footnote{Note that we implicitly assume the existence of $A_i$ that does not have full row rank for problems categorized into Case 3); otherwise, the problem should be classified into Case 2).}, and $\rho > 0$,
          $$\kappa_{\fr} = \frac{\max_{i \in \sI}\frac{\os^2(A_i)}{\mu_i} +\frac{L_{h^*}}{n}+ \rho\ove{C}}{\max\pa{\rho\ove{C}, \frac{\mine{\A\A\T + \rho L_f\mC}}{L_f}}}\frac{\ove{C}}{\ue{C}}.$$
  \end{enumerate}
  Furthermore, for the first two cases, if $f_i$ and $h^*$ are twice differentiable, and $g_i = 0$, we can obtain tighter condition numbers:
  \begin{enumerate}
    \item $\kappa_{\fr} = \frac{\max_{i \in \sI}\frac{\os^2(A_i)}{\mu_i}+\frac{L_{h^*}}{n} + \rho\ue{C}}{\frac{\mu_{h^*}}{n} + \rho\ove{C}}\frac{\ove{C}}{\ue{C}}$ for the first case;
    \item $\kappa_{\fr} = \frac{\max_{i \in \sI}\frac{\os^2(A_i)}{\mu_i}+\frac{L_{h^*}}{n} + \rho\ue{C}}{\min_{i \in \sI}\frac{\mins^2(A_i)}{L_i} + \rho\ove{C}}\frac{\ove{C}}{\ue{C}}$ for the second case.
  \end{enumerate}
\end{lemma}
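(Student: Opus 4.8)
The plan is to split the proof into the three \emph{loose} condition numbers, which are pure bookkeeping on top of \cref{F_rho_convex}, and the two \emph{tight} refinements, which need a genuine second-order computation. For the loose part, \cref{F_rho_convex} already supplies $\mu_{\fr} = \ue{C}/L_H$ with $L_H = \max_{i \in \sI}\frac{\os^2(A_i)}{\mu_i} + \rho\ove{C} + \frac{L_{h^*}}{n}$, and $L_{\fr} = 1/\max\pa{\rho, \mu_H/\ove{C}}$ with the case-dependent $\mu_H$. Forming $\kappa_{\fr} = L_{\fr}/\mu_{\fr}$ and rewriting $\max\pa{\rho, \mu_H/\ove{C}} = \frac{1}{\ove{C}}\max\pa{\rho\ove{C}, \mu_H}$ gives
\[ \kappa_{\fr} = \frac{L_H}{\max\pa{\rho\ove{C}, \mu_H}}\cdot\frac{\ove{C}}{\ue{C}}, \]
and substituting the three expressions for $\mu_H$ (flattening the inner $\max$ of Case 2 into a single $\max$) reproduces the first three formulas verbatim; no inequality is used.

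The tight bounds require $g_i = 0$ and twice differentiability, under which $\phi_i(\lambda) = f_i^*(-A_i\T\lambda)$ is $C^2$, so $\nabla^2 H(\ml) = D(\ml) + \rho\mC$ with $D = \nabla^2\mph + \nabla^2\mh^*$ block diagonal. From $\nabla^2\phi_i = A_i\nabla^2 f_i^*(-A_i\T\lambda)A_i\T$ and the conjugate sandwich $\frac{1}{L_i}\I \preceq \nabla^2 f_i^* \preceq \frac{1}{\mu_i}\I$ (\cref{conjugate_smooth,conjugate_strong_convex}), I would establish scalar Loewner bounds $d_{\min}\I \preceq D \preceq d_{\max}\I$: using $A_iA_i\T \preceq \os^2(A_i)\I$ gives $d_{\max} = \max_{i \in \sI}\frac{\os^2(A_i)}{\mu_i} + \frac{L_{h^*}}{n}$ in both cases, while the lower bound is $d_{\min} = \frac{\mu_{h^*}}{n}$ in Case 1 (the $\phi_i$-part only yields $\succeq 0$) and $d_{\min} = \min_{i \in \sI}\frac{\mins^2(A_i)}{L_i}$ in Case 2, where full row rank gives $A_iA_i\T \succeq \mins^2(A_i)\I$.

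The decisive step is the chain rule $\nabla^2\fr(\my) = \sqrt{\mC}(\nabla^2 H)^{-1}\sqrt{\mC}$ combined with the observation that $d\I + \rho\mC$ is a polynomial in $\mC$ and hence commutes with $\sqrt{\mC}$. Sandwiching $\nabla^2 H$ between $d_{\min}\I + \rho\mC$ and $d_{\max}\I + \rho\mC$, inverting, and conjugating by $\sqrt{\mC}$ reduces everything to the simultaneously diagonalizable matrices $\sqrt{\mC}\pa{d\I + \rho\mC}^{-1}\sqrt{\mC} = \mC\pa{d\I + \rho\mC}^{-1}$. On $\Range{\sqrt{\mC}}$ their eigenvalues are $c/(d + \rho c)$ with $c$ ranging over the nonzero eigenvalues of $\mC$, i.e.\ $c \in [\ue{C}, \ove{C}]$; since $c \mapsto c/(d + \rho c)$ is increasing for $d > 0$, I read off $L_{\fr} = \ove{C}/(d_{\min} + \rho\ove{C})$ and $\mu_{\fr} = \ue{C}/(d_{\max} + \rho\ue{C})$, so that
\[ \kappa_{\fr} = \frac{d_{\max} + \rho\ue{C}}{d_{\min} + \rho\ove{C}}\cdot\frac{\ove{C}}{\ue{C}}. \]
Inserting the two pairs $(d_{\min}, d_{\max})$ yields exactly the stated tighter numbers.

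The main obstacle is that $D$ and $\mC$ do not commute, so the genuinely optimal condition number would involve their joint spectrum and is intractable. The maneuver that keeps the bound both clean and sharp is to replace $D$ by its scalar Loewner bounds \emph{before} inverting, leaving only the $\mC$-polynomial $d\I + \rho\mC$; this is what restores simultaneous diagonalizability and lets the improvement $\rho\ue{C}$ (in place of the loose $\rho\ove{C}$) surface in the numerator. A secondary care point is that $\fr$ is strongly convex only on $\Range{\sqrt{\mC}}$, so all extrema must be taken over the nonzero spectrum of $\mC$; one should also confirm $L_{\fr} = \ove{C}/(d_{\min} + \rho\ove{C}) \leq 1/\rho$, which is consistent with the global smoothness from \cref{2355}.
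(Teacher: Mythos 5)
Your proposal is correct. The first three formulas are, as you say, pure substitution of the constants from \cref{F_rho_convex} into $\kappa_{\fr} = L_{\fr}/\mu_{\fr}$, which is exactly what the paper does. For the tighter bounds your route differs from the paper's in how $\nabla^2 \fr$ is obtained, though it lands on the identical eigenvalue formula $c/(d+\rho c)$. The paper identifies $\fr$ with the Moreau envelope $M_{\rho F}$ of the \emph{unaugmented} dual function $F$ and invokes an explicit Hessian formula for $M_{\rho F}$ (adapted from \cite{arjevani2020ideal}), then sandwiches $\nabla^2\pa{\mph+\mh^*}^*$ between $\frac{1}{L'}\I$ and $\frac{1}{\mu'}\I$; you instead differentiate $\fr = H^*\pa{-\sqrt{\mC}\,\cdot}$ directly, write $\nabla^2\fr = \sqrt{\mC}\pa{\nabla^2 H}^{-1}\sqrt{\mC}$ with $\nabla^2 H = D + \rho\mC$, and sandwich the block-diagonal part $D$ \emph{before} inverting. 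The two are algebraically equivalent (expanding the paper's Moreau-Hessian formula with $\nabla^2\pa{\mph+\mh^*}^* = \frac{1}{d}\I$ gives eigenvalues $\frac{c}{d+\rho c}$, matching yours), and both rest on the same decisive maneuver you highlight: replacing the non-commuting block-diagonal factor by scalar Loewner bounds so that everything becomes a polynomial in $\mC$, then exploiting monotonicity of $c \mapsto c/(d+\rho c)$ over the nonzero spectrum $[\ue{C}, \ove{C}]$. What your version buys is self-containedness — no appeal to the identification $\fr = M_{\rho F}$ or to the external Moreau-envelope Hessian lemma — and it treats $\rho = 0$ uniformly, whereas the paper's formula carries $1/\rho$ factors. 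What it still needs, and what you should state explicitly, is the justification that $H^*$ is twice differentiable with $\nabla^2 H^*(y) = \pa{\nabla^2 H\pa{\nabla H^*(y)}}^{-1}$; this follows from the strong convexity and twice differentiability of $H$ via \cite[Corollary X.4.2.10]{hiriart1996convex}, the same ingredient the paper uses. Your scalar bounds $d_{\min}$, $d_{\max}$ coincide with the paper's $\mu'$, $L'$ in both cases, so the final expressions agree.
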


\begin{proof}
  See \cref{appendix_kappa_F}.
\end{proof}

\begin{remark}
  Note that constructing the stepsizes of iD2A requires knowledge of $\kappa_{\fr}$. According to \cref{kappa_F}, $\kappa_{\fr}$ involves $\mine{\A\A\T + \rho L_f\mC}$ for the last two cases, but calculating $\mine{\A\A\T + \rho L_f\mC}$ in a decentralized manner may be challenging. In practice, however, this calculation is unnecessary, detailed explanation is provided in \cref{remark_1}.
\end{remark}

\begin{remark}
  Recall that MiD2A can be viewed as a special case of iD2A with $C$ replaced by $P_K(C)$. Therefore, when analyzing the convergence of MiD2A, we can simply apply the convergence results of iD2A by substituting $C$ with $P_K(C)$. An extra issue for MiD2A is the decentralized computation of $\ove{P_K(C)}$ and $\ue{P_K(C)}$. Fortunately, there exists an alternative solution. According to the proof of Theorem 4 in \cite{scaman2017optimal}, we have $1-2\frac{c_1^K}{1+c_1^{2K}} \leq \ue{P_K(C)} \leq \ove{P_K(C)} \leq 1+2\frac{c_1^K}{1+c_1^{2K}}$, where $c_1 = \frac{\sqrt{\kappa_C}-1}{\sqrt{\kappa_C}+1}$. Thus, when $\ove{P_K(C)}$ and $\ue{P_K(C)}$ are difficult to compute in a decentralized manner, we can simply set $\ove{P_K(C)} = 1+2\frac{c_1^K}{1+c_1^{2K}}$ and $\ue{P_K(C)} = 1-2\frac{c_1^K}{1+c_1^{2K}}$.
\end{remark}

From \cref{kappa_F}, it is evident that the tunable parameter $\rho$ plays an important role in determining the value of $\kappa_{\fr}$. This raises two key questions regarding the selection of $\rho$:
\begin{enumerate}
  \item Should we choose $\rho=0$ or $\rho>0$?
  \item If we choose $\rho>0$, how large should it be?
\end{enumerate}
According to \cref{1643}, we can select either $\rho=0$ or $\rho>0$ for the first two cases, but $\rho>0$ is the only option for the third case. From \cref{kappa_F}, we observe that $\kappa_{\fr}$ decreases as $\rho$ increases, which is desirable. However, it is important to recognize that a nonzero or larger $\rho$ introduces additional challenges in solving the subproblem \cref{inner_pro} compared to the case where $\rho = 0$:
\begin{enumerate}
  \item A larger $\rho$ increases the condition number of $H$, making the subproblem numerically more challenging.
  \item When $\rho = 0$, \cref{inner_pro} can be decomposed to $n$ independent local problems, allowing each agent to solve its subproblem locally without communication. In contrast, for $\rho > 0$, such a decomposition is no longer possible, requiring agents to cooperatively solve \cref{inner_pro} and incurring additional communication overhead.
\end{enumerate}
Thus, selecting a nonzero or larger $\rho$ comes with trade-offs. We will provide a detailed discussion on the choice of $\rho$ in later sections.

Since the convergence rate of iD2A has been explicitly established in \cref{outer_convergence_SC}, its outer iteration complexity can be directly derived. However, to establish the total complexity of iD2A, which includes both communication and computational complexities, we still need to analyze how to solve the subproblem \cref{inner_pro} and determine the associated inner iteration (oracle) complexity.

\section{Convergence Analysis: Overall Complexities} \label{complexity}
Based on the results obtained in \cref{converge_rate}, we are ready to examine the complexities of iD2A. Similar to \cref{converge_rate}, we will focus on the analysis of iD2A, and the results can be directly applied to MiD2A by substituting $C$ with $P_K(C)$.

\subsection{Oracle Complexities of Solving the Subproblem}
We begin by analyzing the oracle complexities associated with solving the subproblem \cref{inner_pro} using first-order algorithms. It is important to note that \cref{inner_pro} represents a special case of the general saddle-point problem described below:
\begin{equation} \label{general_saddle_pro} \tag{SPP}
  \begin{aligned}
  \min_{x \in \R^{d_x}}\max_{y \in \R^{d_y}} &\cL(x, y) \\
  = &f_1(x) + f_2(x) + y\T Bx - g_1(y) - g_2(y),
  \end{aligned}
\end{equation}
where $f_1:\R^{d_x} \rightarrow \R$ and $g_1:\R^{d_y} \rightarrow \R$ are smooth convex functions, $f_2:\R^{d_x} \rightarrow \exs$ and $g_2:\R^{d_y} \rightarrow \exs$ are potentially nonsmooth convex functions, and $B \in \R^{d_y \times d_x}$. We assume that there exists at least one solution $(x^*, y^*)$ to \cref{general_saddle_pro}. Considering the specific form of the subproblem \cref{inner_pro} and the preceding assumptions related to \cref{main_pro}, we focus on \cref{general_saddle_pro} that satisfies the following assumption:
\begin{assumption} \label{2255}
  $f_1$, $f_2$, $g_1$, and $g_2$ satisfy
  \begin{enumerate}
    \item $f_1$ is $\mu_x$-strongly convex and $L_x$-smooth with $L_x \geq \mu_x > 0$;
    \item $g_1$ is convex and $L_y$-smooth with $L_y \geq 0$;
    \item $f_2$ and $g_2$ are proper convex, lower semicontinuous, and proximal-friendly.
  \end{enumerate}
\end{assumption}
\cref{general_saddle_pro} under \cref{2255} has been extensively studied and many efficient algorithms have been proposed, just to name a few, iDAPG \cite{li2025inexact}, LPD \cite{thekumparampil2022lifted}, and ABPD-PGS \cite{luo2024accelerated}.
It is evident that \cref{inner_pro} corresponds to various instantiations of \cref{general_saddle_pro} under different assumptions. We systematically characterize their mapping relationships in \cref{appendix_mapping}. Notably, the subproblem of Case 3 in \cref{tab:complexity_h} corresponds to \cref{general_saddle_pro} satisfying \cref{2255} and the following assumption:
\begin{assumption} \label{general_saddle_pro_assumption}
  $f_2 = 0$, $g_1(y) = g_3(y) + \frac{1}{2}y\T Py + y\T b$, where $g_3$ is a smooth convex function and $P \geq 0$. Additionally, $BB\T + cP > 0$ for any $c > 0$.
\end{assumption}

Once we identify the specific case of \cref{general_saddle_pro} that corresponds to \cref{inner_pro} through their mapping relationships, we can utilize existing algorithms to solve \cref{inner_pro}. To minimize the overall complexity of iD2A, we should select the most efficient algorithm, defined as the one with minimal oracle complexity. In \cref{appendix_oracle_complexity}, we summarize the SOTA oracle complexities for different cases of \cref{general_saddle_pro}, enabling us to conveniently choose the best algorithm and obtain its associated oracle complexity.

\begin{remark}
  The subproblem solver is crucial for enhancing the performance of iD2A. There are two levels of acceleration in iD2A: (1) iD2A itself is an accelerated algorithm for solving \cref{final_pro}; (2) the subproblem solver of iD2A can also serve as an accelerated algorithm for solving \cref{inner_pro}, with the second level of acceleration arising from the decoupled structure of iD2A.
\end{remark}

\begin{figure*}[tb]
  \begin{center}
    \includegraphics[scale=0.32]{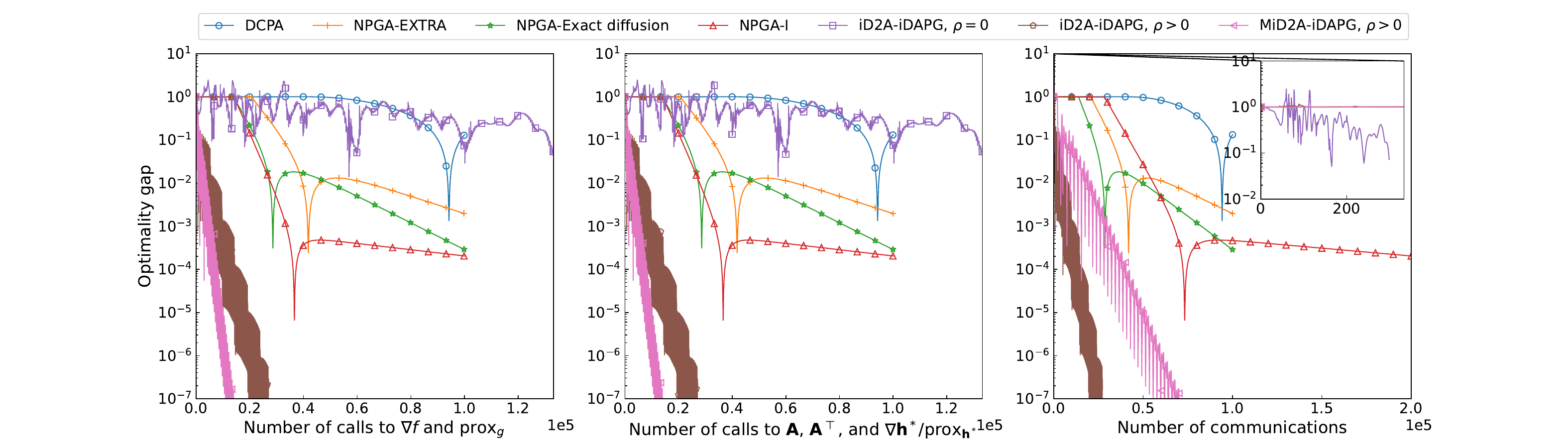}
    \caption {Results of Experiment I: Decentralized Elastic Net Regression ($n = 8, p = 20, d = 9, \kappa_C = 25, \kappa_f = 1, \kappa_{pd} = 98603$)\protect\footnotemark.}
    \label{expt2}
  \end{center}
\end{figure*}
\footnotetext{The optimality gap is defined as $\frac{\norm{\mx^k-\mx^*}}{\|\mx^0-\mx^*\|}$. Definitions of these condition numbers can be found in \cref{tab:complexity_h}. All condition numbers are either rounded to the nearest integer or expressed in scientific notation with three significant figures. The calls to $\A$ and $\A\T$ refer to the matrix-vector multiplications involving $\A$ and $\A\T$.}

\begin{figure*}[tb]
  \begin{center}
    \includegraphics[scale=0.32]{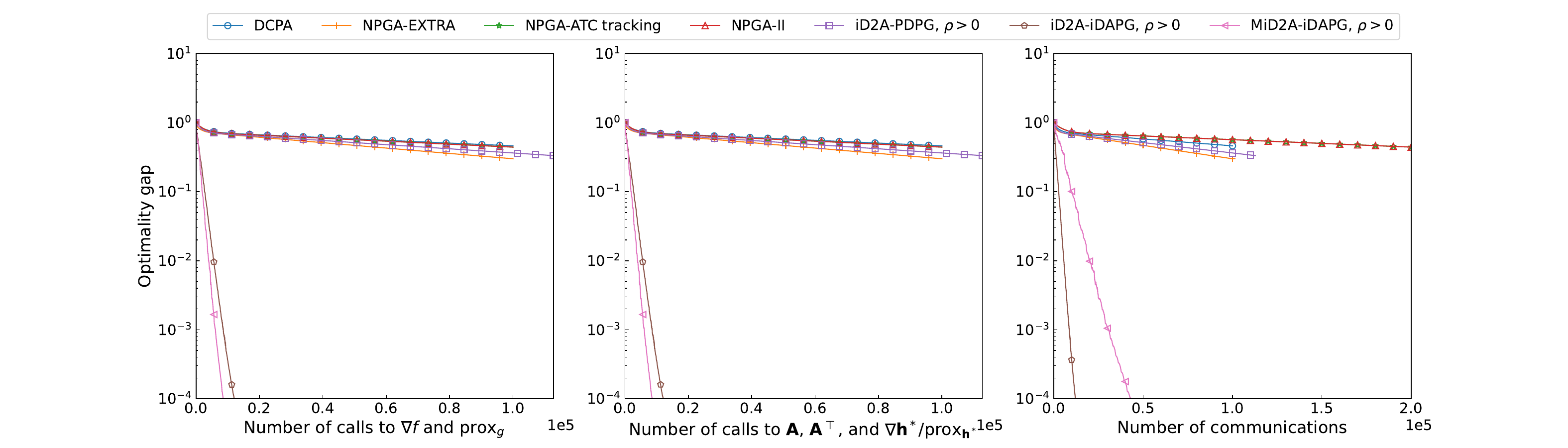}
    \caption {Results of Experiment II: Decentralized Constrained Linear Regression ($n = 8, p = 9, d = 9, \kappa_C = 25, \kappa_f = 1, \kappa_{\A_{\rho}} = 8.24 \times 10^{15}, \kappa_{\A'_{\rho}} = 4.89 \times 10^{16}$).}
    \label{expt3}
  \end{center}
\end{figure*}

When employing an algorithm to solve \cref{general_saddle_pro}, it is often practical to estimate the upper bounds of $\norm{y-y^*}$ and $\norm{x-x^*}$ using easily computable quantities. These estimates can then serve as effective stopping criteria. Here, $(x, y)$ represents the algorithm's output. To aid in this process, we provide the following useful lemma.

\begin{lemma} \label{1518}
  Assume that \cref{2255} holds, and $\varphi(y) = g_1(y) + (f_1+f_2)^*(-B\T y)$ is $\mu_{\varphi}$-strongly convex. Then, for any $(x, y) \in \R^{d_x} \times \R^{d_y}$ such that $\partial f_2(x)$ and $\partial g_2(y)$ are nonempty, we have
  \eqe{
    \norm{y-y^*} &\leq \frac{1}{\mu_{\varphi}}\dist{\0, \partial_y \cL(x, y)} + \frac{\os(B)}{\mu_x\mu_{\varphi}}\dist{\0, \partial_x \cL(x, y)}, \\
    \norm{x-x^*} &\leq \frac{\os(B)}{\mu_x\mu_{\varphi}}\dist{\0, \partial_y \cL(x, y)} \\
    &+ \pa{\frac{1}{\mu_x} + \frac{\os^2(B)}{\mu_x^2\mu_{\varphi}}}\dist{\0, \partial_x \cL(x, y)},
    \nonumber
  }
  where $(x^*, y^*)$ is the unique solution of \cref{general_saddle_pro}.
\end{lemma}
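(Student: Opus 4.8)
The plan is to reduce the two-sided bound to two separate strong-convexity estimates---one for the inner minimization in $x$ and one for the dual objective in $y$---linked through the exact inner minimizer. First I would introduce $\hat{x}(y) = \arg\min_{x}\,(f_1+f_2)(x) + (B\T y)\T x = \nabla(f_1+f_2)^*(-B\T y)$, which is well defined and single-valued because $f_1+f_2$ is $\mu_x$-strongly convex; by \cref{conjugate_smooth} the conjugate $(f_1+f_2)^*$ is differentiable with $\frac{1}{\mu_x}$-Lipschitz gradient. Minimizing $\cL(x,y)$ over $x$ shows that the dual objective in $y$ is $\Phi(y) := \varphi(y) + g_2(y) = g_1(y) + (f_1+f_2)^*(-B\T y) + g_2(y)$, so $y^*$ is the unique minimizer of $\Phi$ (which is $\mu_{\varphi}$-strongly convex, since $g_2$ is convex and $\varphi$ is $\mu_{\varphi}$-strongly convex), and $x^* = \hat{x}(y^*)$. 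The engine of both bounds is the standard fact that for a $\mu$-strongly convex $F$ with minimizer $z^*$ one has $\norm{z-z^*} \leq \frac{1}{\mu}\dist{\0,\partial F(z)}$, which follows from $s\T(z-z^*) \geq \mu\norm{z-z^*}^2$ for every $s \in \partial F(z)$ together with Cauchy--Schwarz.

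For the first inequality I would apply this bound to $\Phi$ at $y$, giving $\norm{y-y^*} \leq \frac{1}{\mu_{\varphi}}\dist{\0,\partial\Phi(y)}$. The key observation is that, by \cref{subdiff}, $\partial\Phi(y) = \nabla g_1(y) - B\hat{x}(y) + \partial g_2(y)$, whereas $\partial_y\cL(x,y) = Bx - \nabla g_1(y) - \partial g_2(y)$ uses the available point $x$ in place of $\hat{x}(y)$. Hence $\partial\Phi(y)$ and $-\partial_y\cL(x,y)$ coincide up to the constant shift $B(x-\hat{x}(y))$; using $\dist{\0,-S}=\dist{\0,S}$ together with the set-perturbation inequality $\dist{\0,S+c} \leq \dist{\0,S} + \norm{c}$ yields $\dist{\0,\partial\Phi(y)} \leq \dist{\0,\partial_y\cL(x,y)} + \os(B)\norm{x-\hat{x}(y)}$. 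It remains to control $\norm{x-\hat{x}(y)}$: since $\hat{x}(y)$ minimizes the $\mu_x$-strongly convex map $x \mapsto (f_1+f_2)(x) + (B\T y)\T x$, whose subdifferential (again by \cref{subdiff}) is exactly $\partial_x\cL(x,y)$, the strong-convexity bound gives $\norm{x-\hat{x}(y)} \leq \frac{1}{\mu_x}\dist{\0,\partial_x\cL(x,y)}$. Combining these three estimates produces the first claimed inequality.

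The second inequality follows from the triangle inequality $\norm{x-x^*} \leq \norm{x-\hat{x}(y)} + \norm{\hat{x}(y)-x^*}$. The first term is bounded as above by $\frac{1}{\mu_x}\dist{\0,\partial_x\cL(x,y)}$; for the second, since $x^* = \hat{x}(y^*)$ and $\hat{x} = \nabla(f_1+f_2)^*(-B\T\,\cdot)$ is $\frac{\os(B)}{\mu_x}$-Lipschitz (the $\frac{1}{\mu_x}$-Lipschitz gradient composed with the linear map $-B\T$), we get $\norm{\hat{x}(y)-x^*} \leq \frac{\os(B)}{\mu_x}\norm{y-y^*}$. Substituting the first inequality for $\norm{y-y^*}$ and collecting the coefficients of $\dist{\0,\partial_y\cL(x,y)}$ and $\dist{\0,\partial_x\cL(x,y)}$ gives the second bound. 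I expect the main obstacle to be the bookkeeping in the second paragraph: correctly identifying that $\partial\Phi(y)$ and $-\partial_y\cL(x,y)$ differ only by the shift $B(x-\hat{x}(y))$---this is exactly where the inexactness in the inner variable enters and where the cross coefficient $\frac{\os(B)}{\mu_x\mu_{\varphi}}$ originates---and justifying the set-valued perturbation step rather than a pointwise one.
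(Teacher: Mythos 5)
Your proposal is correct and follows essentially the same route as the paper's proof: you introduce the exact inner minimizer $\hat{x}(y)=\nabla(f_1+f_2)^*(-B\T y)$ (the paper's $x^*(y)$), bound $\norm{x-\hat{x}(y)}$ by strong convexity in $x$, bound $\norm{y-y^*}$ by strong convexity of $\varphi+g_2$ after shifting the dual subdifferential by $B(x-\hat{x}(y))$, and close with the $\frac{\os(B)}{\mu_x}$-Lipschitzness of $\hat{x}$ plus the triangle inequality. The only cosmetic difference is that you phrase the perturbation step as a set-translation bound $\dist{\0,S+c}\leq\dist{\0,S}+\norm{c}$ while the paper writes the same estimate as a minimum over subgradients of $g_2$.
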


\begin{proof}
  See \cref{appendix_1518}.
\end{proof}

Note that the condition that $\varphi$ is strongly convex can be satisfied for all cases in \cref{tab:complexity_h}. For details, please refer to \cref{appendix_mapping} and \cite[Lemma 1]{li2025inexact}.

\subsection{Overall Complexities of iD2A}
Let us begin by addressing the problem of ensuring that the output of the subproblem solver of iD2A, $(\mx\+, \ml\+)$, satisfies the error conditions \cref{x_error_condition} and \cref{lambda_error_condition}. Since $(\mx_{k+1}^*, \ml_{k+1}^*)$ is unknown prior to solving the subproblem, \cref{x_error_condition,lambda_error_condition} cannot be directly enforced. However, by leveraging \cref{1518}, we can derive upper bounds for $\norm{\mx\+ - \mx_{k+1}^*}$ and $\norm{\ml\+ - \ml_{k+1}^*}$, which leads to the following lemma.
\begin{lemma} \label{2046}
  Assume that \cref{G,convex,1643} hold. Then, for $(\mx\+, \ml\+)$ such that $\partial g(\mx\+)$ and $\partial \mh^*(\ml\+)$ are nonempty, the conditions \cref{x_error_condition} and \cref{lambda_error_condition} are met if
  \eqe{
    &\frac{\os(\A)}{\mu_f\mu_H}\dist{\0, \partial_{\ml} \cT(\mz^k, \mx\+, \ml\+)} \\
    &+ \pa{\frac{1}{\mu_f} + \frac{\os^2(\A)}{\mu_f^2\mu_H}}\dist{\0, \partial_{\mx} \cT(\mz^k, \mx\+, \ml\+)} \leq \sqrt{e_{\mx,k+1}},
    \nonumber
  }
  \eqe{
    &\frac{1}{\mu_H}\dist{\0, \partial_{\ml} \cT(\mz^k, \mx\+, \ml\+)} \\
    &+ \frac{\os(\A)}{\mu_f\mu_H}\dist{\0, \partial_{\mx} \cT(\mz^k, \mx\+, \ml\+)} \leq \sqrt{e_{\ml,k+1}},
    \nonumber
  }
  where $\mu_H$ is defined in \cref{F_rho_convex}.
\end{lemma}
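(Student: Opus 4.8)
The plan is to obtain \cref{2046} as a direct consequence of \cref{1518}, after recognizing the subproblem \cref{inner_pro} as a special case of the general saddle-point problem \cref{general_saddle_pro}. First I would fix the correspondence $x \leftrightarrow \mx$, $y \leftrightarrow \ml$, $f_1 = f$, $f_2 = g$, and $B = \A$, so that the bilinear term $\ml\T\A\mx$ matches $y\T Bx$; under this identification \cref{convex} guarantees \cref{2255}, with $\mu_x = \mu_f$ and $\os(B) = \os(\A)$. The remaining $\ml$-dependent terms $\mh^*(\ml) + \frac{\rho}{2}\ml\T\mC\ml + \ml\T\mz^k$ are split into the smooth part $g_1$ and the proximal-friendly part $g_2$ exactly as prescribed by the mapping in \cref{appendix_mapping}, where whether $\mh^*$ is placed in $g_1$ or $g_2$ depends on whether it is smooth or merely proximal-friendly.

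Next I would compute the conjugate term appearing in \cref{1518}. Since $f$ and $g$ are separable and $\A = \diag{A_1, \cdots, A_n}$, the proof of \cref{1552} gives $(f_1+f_2)^*(-B\T\ml) = (f+g)^*(-\A\T\ml) = \mph(\ml)$, so that $\varphi(\ml) = g_1(\ml) + \mph(\ml)$ and the full (negated) dual objective of \cref{inner_pro} equals $H(\ml) + \ml\T\mz^k$. By the strong-convexity part of \cref{F_rho_convex}, $H$ is $\mu_H$-strongly convex; note that $\mu_H$ does not involve $L_{h^*}$ in any of the three cases, so this conclusion is valid under \cref{G,convex,1643} alone, as assumed here. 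The decomposition of \cref{appendix_mapping} is arranged precisely so that $\varphi$ inherits this strong convexity, i.e. $\mu_{\varphi} \geq \mu_H$. Since the coefficients $\frac{1}{\mu_{\varphi}}$, $\frac{\os(B)}{\mu_x\mu_{\varphi}}$, and $\frac{1}{\mu_x} + \frac{\os^2(B)}{\mu_x^2\mu_{\varphi}}$ in \cref{1518} are all nonincreasing in $\mu_{\varphi}$, replacing $\mu_{\varphi}$ by $\mu_H$ keeps both inequalities valid.

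Finally I would substitute $\mu_x = \mu_f$, $\os(B) = \os(\A)$, and $\mu_{\varphi} \to \mu_H$ into the two estimates of \cref{1518} applied to $\cL = \cT(\mz^k, \cdot, \cdot)$ at the output $(\mx\+, \ml\+)$. The $\norm{x-x^*}$ bound becomes exactly the left-hand side of the first hypothesis of \cref{2046}, and the $\norm{y-y^*}$ bound becomes exactly the left-hand side of the second; hence the two assumed inequalities force $\norm{\mx\+ - \mx^*_{k+1}} \leq \sqrt{e_{\mx,k+1}}$ and $\norm{\ml\+ - \ml^*_{k+1}} \leq \sqrt{e_{\ml,k+1}}$, and squaring yields \cref{x_error_condition} and \cref{lambda_error_condition}. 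The main obstacle is the step $\mu_{\varphi} \geq \mu_H$: it requires the case-by-case check that the chosen smooth/nonsmooth split of $\mh^*(\ml) + \frac{\rho}{2}\ml\T\mC\ml$ leaves $\varphi$ strongly convex with modulus at least $\mu_H$, drawing the strong convexity either from $\mh^*$ (Case 1 of \cref{1643}) or from $\mph$ via the full-row-rank conditions (Cases 2 and 3). Everything else is routine substitution.
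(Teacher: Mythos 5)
Your proposal is correct and follows essentially the same route as the paper: cast \cref{inner_pro} as an instance of \cref{general_saddle_pro} with $f_1 = f$, $f_2 = g$, $B = \A$, verify \cref{2255} and that $\varphi$ is $\mu_H$-strongly convex (case by case via \cref{1643}), and then apply \cref{1518} with $\mu_x = \mu_f$, $\os(B) = \os(\A)$. If anything, you are more explicit than the paper's one-line argument about the $g_1$/$g_2$ split and about why replacing $\mu_\varphi$ by the possibly smaller $\mu_H$ preserves the inequalities.
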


\begin{proof}
  See \cref{appendix_2046}.
\end{proof}

The following theorem establishes the outer and inner iteration complexities of iD2A.
\begin{theorem} \label{iD2A_complexity_convex}
  Under the same assumptions and conditions with \cref{outer_convergence_SC}, choose a constant $c > 1$ and set
  \eqe{ \label{1547}
  \theta_{\ml} &= \theta_{\mx} = 1-\frac{1}{c\sqrt{\kappa_{\fr}}}, \\
  e_{\ml,1} &= \frac{\mu_{\fr}}{\ove{C}}\pa{\sqrt{\theta_{\ml}}-\sqrt{1-\frac{1}{\sqrt{\kappa_{\fr}}}}}^2\pa{\fr(\my^0) - \fr(\my^*)}, \\
  e_{\mx,1} &= \frac{\os^2(\A)}{\mu_f^2}e_{\ml,1}.
  }
  Then, the outer iteration complexity of iD2A (to guarantee $\norm{\mx\+-\mx^*}^2 \leq \epsilon$) is given by
  \eqe{ \label{1545}
    \mO\lt(\sqrt{\kappa_{\fr}} \log \lt(\frac{\mathcal{C}\pa{\fr(\my^0) - \fr(\my^*)}}{\epsilon}\rt)\rt),
  }
  where $\mathcal{C} > 0$ is a constant.
  Suppose algorithm $\mathcal{A}$ can solve \cref{inner_pro} and produce an approximate solution satisfying both \cref{x_error_condition} and \cref{lambda_error_condition}, with an iteration complexity of
  \eqe{ \label{1634}
    \bO{\Delta \log \pa{\frac{\mathcal{C}_1\norm{\mx-\mx_{k+1}^*}^2 + \mathcal{C}_2\norm{\ml-\ml_{k+1}^*}^2}{\min\pa{e_{\mx,k+1}, e_{\ml,k+1}}}}},
  }
  where $\Delta > 0$ and $\mathcal{C}_1, \mathcal{C}_2 \geq 0$ are constants, and $(\mx, \ml)$ is the initialized solution for $\mathcal{A}$. If $\mathcal{A}$ is selected as the solver for \cref{inner_pro} and $(\mx^k, \ml^k)$ is used as the initialized solution for $\mathcal{A}$ at the $k$-th iteration, then the total inner iteration complexity of iD2A is given by
  \eqe{ \label{1546}
    \tilde{\mO}\pa{\Delta \sqrt{\kappa_{\fr}} \log \pa{\frac{\mathcal{C}\pa{\fr(\my^0) - \fr(\my^*)}}{\epsilon}}},
  }
  where $\tilde{\mO}$ hides a logarithmic factor dependent on $\mu_f$, $\mu_H$, $\mu_{\fr}$, $L_{\fr}$, $\os^2(\A)$, $\ove{C}$, $\mathcal{C}_1$, $\mathcal{C}_2$, and $c$.
\end{theorem}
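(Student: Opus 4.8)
The plan is to derive the three complexity claims in sequence, reusing the linear convergence rate from \cref{outer_convergence_SC} as the backbone. The outer iteration complexity \cref{1545} should be the most direct: under the parameter choices in \cref{1547}, I would first check that $\theta_{\ml} = \theta_{\mx} = 1 - \frac{1}{c\sqrt{\kappa_{\fr}}}$ falls into Case (1) of \cref{outer_convergence_SC} (i.e. $\theta_{\ml} \neq 1 - \frac{1}{\sqrt{\kappa_{\fr}}}$, which holds since $c > 1$). The dominant base of the geometric decay is then $\max(1 - \frac{1}{\sqrt{\kappa_{\fr}}}, \theta_{\ml}, \theta_{\mx}) = 1 - \frac{1}{c\sqrt{\kappa_{\fr}}}$, so that $\norm{\mx\+ - \mx^*}^2 = \mO\!\left(\mathcal{C}(\fr(\my^0)-\fr(\my^*))\,(1-\tfrac{1}{c\sqrt{\kappa_{\fr}}})^k\right)$ for some constant $\mathcal{C}$. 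Setting this below $\epsilon$ and inverting the geometric bound via $\log(1-\frac{1}{c\sqrt{\kappa_{\fr}}})^{-1} = \Theta(\frac{1}{c\sqrt{\kappa_{\fr}}})$ yields \cref{1545}. The specific forms of $e_{\ml,1}$ and $e_{\mx,1}$ in \cref{1547} are chosen precisely so that the constant $\mathcal{C}$ absorbs the initialization contributions cleanly; I would verify that the prefactor simplifies as claimed rather than recompute it in full.

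For the inner iteration complexity, the key step is to bound, uniformly over outer iterations $k$, the logarithmic ratio appearing in \cref{1634} when $\mathcal{A}$ is warm-started at $(\mx^k, \ml^k)$. The plan is to control the numerator $\mathcal{C}_1\norm{\mx^k - \mx_{k+1}^*}^2 + \mathcal{C}_2\norm{\ml^k - \ml_{k+1}^*}^2$ and the denominator $\min(e_{\mx,k+1}, e_{\ml,k+1})$ separately. For the denominator, the geometric error schedule \cref{2333} with common ratio $\theta_{\ml} = \theta_{\mx} = 1 - \frac{1}{c\sqrt{\kappa_{\fr}}}$ gives $\min(e_{\mx,k+1}, e_{\ml,k+1}) = \Theta\!\left(\theta^{k}\right)$ with the fixed initial values from \cref{1547}. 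For the numerator, I would use a triangle inequality to split $\norm{\mx^k - \mx_{k+1}^*} \leq \norm{\mx^k - \mx^*} + \norm{\mx^* - \mx_{k+1}^*}$, then bound the first term by the outer convergence rate and the second by the distance between consecutive subproblem solutions, which itself decays geometrically (by the Lipschitz dependence of the saddle solution on $\mz^k$ established implicitly through \cref{23551} and the strong convexity of $H$). Matching the geometric rate of numerator and denominator shows the ratio inside the logarithm stays bounded by a constant independent of $k$, so each inner solve costs $\bO{\Delta \log(\text{const})} = \tilde{\mO}(\Delta)$ iterations. Multiplying by the outer count \cref{1545} produces \cref{1546}, with the hidden logarithmic factor gathering the problem constants $\mu_f, \mu_H, \mu_{\fr}, L_{\fr}, \os^2(\A), \ove{C}, \mathcal{C}_1, \mathcal{C}_2, c$.

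The main obstacle is the uniform-in-$k$ boundedness of the warm-start ratio in \cref{1634}. The numerator and denominator both decay geometrically, but with potentially different rates, so the danger is that their quotient grows (if the numerator decays slower) or that the choice of $e_{\ml,1}, e_{\mx,1}$ in \cref{1547} fails to make the $k=0$ term align with the subsequent recursion. I would need the consecutive-solution displacement $\norm{\mx_{k+1}^* - \mx_k^*}$ (and analogously for $\ml$) to decay at least as fast as $\sqrt{\min(e_{\mx,k+1}, e_{\ml,k+1})}$; establishing this requires quantifying how $\mx_{k+1}^*$ moves as $\mz^k$ updates, which in turn depends on the momentum step $\mz\+ = \mw\+ + \beta_k(\mw\+ - \mw^k)$ and the geometric contraction of $\norm{\my^k - \my^*}$. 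The careful part is propagating the strong-convexity modulus $\mu_H$ and the conditioning $\os(\A)/\mu_f$ from \cref{2046} through these displacement bounds so that the logarithm's argument is genuinely $\mO(1)$; this is where the precise constants $\mathcal{C}_1, \mathcal{C}_2$ and the factor $c > 1$ (which creates a strict gap between the outer rate $1 - \frac{1}{\sqrt{\kappa_{\fr}}}$ and the error-schedule rate $1 - \frac{1}{c\sqrt{\kappa_{\fr}}}$) earn their keep.
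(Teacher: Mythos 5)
Your proposal is correct and follows essentially the same route as the paper: the outer bound comes from Case (1) of \cref{outer_convergence_SC} with dominant rate $1-\frac{1}{c\sqrt{\kappa_{\fr}}}$ (the choice of $e_{\ml,1}$ cancelling the $\fr(\my^0)-\fr(\my^*)$ factor), and the inner bound comes from showing the warm-start log ratio in \cref{1634} is bounded uniformly in $k$ by splitting $\norm{\mx^k-\mx^*_{k+1}}$ (resp.\ $\norm{\ml^k-\ml^*_{k+1}}$) through the fixed optimum $\mx^*$ (resp.\ $\ml^*$) and matching the geometric decay of both terms against $e_{\mx,k+1}$, $e_{\ml,k+1}$. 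One small note: the paper never needs the consecutive-solution displacement $\norm{\mx^*_{k+1}-\mx^*_k}$ you worry about in your last paragraph; it bounds $\norm{\mx^*_{k+1}-\mx^*}$ directly via the $\frac{1}{\mu_H}$-Lipschitz dependence of $\ml^*_{k+1}$ on $\sqrt{\mC}\mv^k$ and the $\frac{\os(\A)}{\mu_f}$ transfer to $\mx^*_{k+1}$, exactly as your second paragraph already sketches.
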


\begin{proof}
  See \cref{appendix_iD2A_complexity_convex}.
\end{proof}

\begin{remark} \label{2303}
  Note that the choice of $e_{\ml,1}$ in \cref{iD2A_complexity_convex} is made solely to achieve a tighter logarithmic constant in \cref{1545}. However, due to the presence of $\fr(\my^*)$, this choice cannot be directly applied. The alternative choice is provided in \cref{remark_2}.
\end{remark}

\begin{remark} \label{0137}
  As demonstrated in \cref{iD2A_complexity_convex}, achieving the desired inner iteration complexity in \cref{1546} requires the subproblem solver of iD2A to satisfy \cref{1634}. Based on established convergence results, LPD \cite{thekumparampil2022lifted}, APDG \cite{kovalev2022accelerated}, the algorithm proposed in \cite{salim2022optimal}, and PDPG \cite{li2025inexact} meet this requirement, whereas ABPD-PGS \cite{luo2024accelerated} does not. Furthermore, as shown in \cite[Theorem 3]{li2025inexact}, iDAPG \cite{li2025inexact} also satisfies the requirement provided that $\Phi$ is smooth, which allows us to bound $\Phi(y^0)-\Phi(y^*)$ by $\norm{y^0-y^*}^2$. Notably, $\Phi$ corresponds to $H$ when solving \cref{saddle_pro}, and under the assumptions for \cref{iD2A_complexity_convex}, $H$ is indeed smooth by \cref{F_rho_convex}. Consequently, when iDAPG is employed as the subproblem solver, the inner iteration complexity of iD2A also adheres to \cref{1546}. To avoid potential confusion, we emphasize that \cref{1634} is intended solely for theoretical analysis. It does not imply that the subproblem solver requires knowledge of the terms $\norm{\mx-\mx_{k+1}^*}$ and $\norm{\ml-\ml_{k+1}^*}$; rather, it indicates that the solver's iteration complexity can be bounded by an expression involving these two quantities. In contrast, \cref{2046} is derived from a practical implementation perspective. It provides a concrete guideline for determining when the inexact solution provided by the subproblem solver satisfies \cref{x_error_condition} and \cref{lambda_error_condition}, thereby offering a clear stopping criterion for the inner loop.
\end{remark}

The outer iteration complexity of iD2A for different cases in \cref{kappa_F} follows directly from \cref{iD2A_complexity_convex} and \cref{kappa_F}. The inner iteration complexity, however, is determined by the iteration complexity of the selected subproblem solver. As established in \cref{appendix_oracle_complexity}, the iteration complexities of existing algorithms for solving \cref{general_saddle_pro}, which includes iD2A's subproblem as a special case, are well characterized. Through the explicit mapping between \cref{inner_pro} and \cref{general_saddle_pro} provided in \cref{appendix_mapping}, we can reformulate \cref{inner_pro} in terms of \cref{general_saddle_pro}, and consequently obtain precise inner iteration complexities for iD2A when using different subproblem solvers.

Once the outer and inner iteration complexities of iD2A are determined, we can evaluate its performance in comparison to existing algorithms. Decentralized algorithms are typically compared based on the communication and computation costs required to achieve a specified accuracy. The communication cost of a decentralized algorithm, regardless of the optimization problem type, is fully characterized by its communication complexity. In contrast, the metric for computation cost varies depending on the optimization problem. For \cref{main_pro}, the primary computational cost arises from evaluating $\nabla f$, $\prox_{g}$, $\nabla \mh^*$ or $\prox_{\mh^*}$, as well as performing matrix-vector multiplications involving $\A$ and $\A\T$. Let $\mathcal{A}$ represent the evaluation of $\nabla f$ and $\prox_{g}$, and let $\mathcal{B}$ denote the evaluation of $\nabla \mh^*$/$\prox_{\mh^*}$ and $\prox_{g_2}$, along with matrix-vector multiplications involving $\A$ and $\A\T$. Then, the computational cost of a decentralized algorithm for \cref{main_pro} can be fully characterized by the oracle complexities of $\mathcal{A}$ and $\mathcal{B}$.

According to the update rules of iD2A, the oracle complexities of $\mathcal{A}$ and $\mathcal{B}$ are solely determined by its inner iteration complexity, while the relationship between its communication complexity and the outer and inner iteration complexities depends on the value of $\rho$. When $\rho=0$, the communication complexity equals the outer iteration complexity; when $\rho>0$, its communication complexity is the sum of the outer and inner iteration complexities. \cref{2138} gives a detailed discussion on the optimal choice of $\rho$.

\begin{remark} \label{2138}
  We first discuss the optimal choice of $\rho$ when $\rho>0$. According to \cref{iD2A_complexity_convex}, \cref{kappa_F}, \cref{appendix_oracle_complexity}, and \cref{appendix_mapping}, the outer complexity of iD2A decreases as $\rho$ increases; while the inner complexity of iD2A is a convex function of $\rho$ on $(0, +\infty)$ that attains its minimum at $\rho^*(C) = \frac{\max_{i \in \sI}\frac{\os^2(A_i)}{\mu_i} +\frac{L_{h^*}}{n}}{\ove{C}}$, provided that any qualified subproblem solver discussed in \cref{0137} is used. Therefore, an approximately optimal choice is to set $\rho = \rho^*(C)$, which achieves the lowest oracle complexities of $\mathcal{A}$ and $\mathcal{B}$ and approximately the lowest communication complexity. For MiD2A, the corresponding choice is to set $\rho = \rho^*\pa{P_K(C)} = \frac{\max_{i \in \sI}\frac{\os^2(A_i)}{\mu_i} +\frac{L_{h^*}}{n}}{\ove{P_K(C)}}$. Based on this choice of $\rho$ and \cref{kappa_F}, we can verify that $\kappa_{\fr} = 2\kappa_C$ for iD2A and $\kappa_{\fr} = 2\kappa_{P_K(C)} \leq 8$ for MiD2A.
  The main advantage of choosing $\rho=0$ is that the communication complexity is solely determined by the outer iteration complexity, which may result in a slightly lower communication complexity compared to $\rho > 0$. However, choosing $\rho=0$ typically leads to significantly worse oracle complexities of $\mathcal{A}$ and $\mathcal{B}$, see \cref{appendix_full_table} for details.
\end{remark}

Based on the above analysis, the communication complexity and the oracle complexities of $\mathcal{A}$ and $\mathcal{B}$ for iD2A and MiD2A can be fully characterized once the subproblem solver and $\rho$ are specified. We present the complexities of iD2A and MiD2A with different subproblem solvers across various scenarios in \cref{appendix_full_table}. For conciseness, \cref{tab:complexity_h} focuses only on a specific version of MiD2A: MiD2A-iDAPG ($\rho>0$), which demonstrates significantly lower communication complexity and oracle complexities of $\mathcal{A}$ and $\mathcal{B}$ than SOTA algorithms in all three cases. It is important to note that other versions of iD2A and MiD2A may exhibit lower communication complexity or oracle complexity of $\mathcal{A}$ for Cases 1 and 2; see \cref{appendix_full_table}. We select MiD2A-iDAPG due to its balanced performance across different metrics, as other versions may have slightly lower communication complexity but substantially higher oracle complexity of $\mathcal{B}$.

\begin{remark}
  The communication and oracle complexities of several primal-dual algorithms listed in \cref{tab:complexity_h}, including DCDA \cite{alghunaim2019proximal}, DCPA \cite{alghunaim2021dual}, and NPGA \cite{li2024npga}, are not explicitly provided in the original literature; only their convergence rates are given. Instead, we have derived these complexities using a methodology similar to that employed for deriving the iteration complexity of PDPG in \cite{li2025inexact}.
\end{remark}

\subsection{Selection of Subproblem Solvers}
The selection of the subproblem solver plays a crucial role in the performance of iD2A and MiD2A. A convenient choice is iDAPG \cite{li2025inexact}, which results in MiD2A-iDAPG exhibiting lower complexities than SOTA algorithms, as shown in \cref{tab:complexity_h}. However, iDAPG may not always be the optimal choice in practice.

To choose the best subproblem solver, we must carefully analyze the characteristics of the network system on which our algorithm will be implemented and the specific problem we aim to solve. For instance, we should identify the bottleneck in the network system: is it communication or computation? If communication is the bottleneck, we should choose a subproblem solver that minimizes communication complexity; otherwise, we should focus on minimizing the oracle complexity of $\mathcal{A}$ and $\mathcal{B}$. Additionally, we need to consider the computational costs of $\mathcal{A}$ and $\mathcal{B}$ for the problem at hand. If the computation cost of $\mathcal{A}$ is significantly higher than that of $\mathcal{B}$, we should opt for a subproblem solver that reduces the oracle complexity of $\mathcal{A}$. Conversely, if $\mathcal{B}$ incurs higher costs, we should choose a solver that lowers the oracle complexity of $\mathcal{B}$.

Based on the above principles and the complexities of different versions of iD2A and MiD2A provided in \cref{appendix_full_table}, we can summarize the selection of subproblem solvers as follows:
\begin{itemize}
  \item \textbf{Case 1}: Choose LPD if the network system is computation-bottlenecked and the computation cost of $\mathcal{A}$ is significantly higher than that of $\mathcal{B}$; otherwise, choose iDAPG.
  \item \textbf{Case 2}: Choose APDG if the network system is computation-bottlenecked, the computation cost of $\mathcal{A}$ is significantly higher than that of $\mathcal{B}$, and $\kappa_f \geq \kappa_{\A}$; otherwise, choose iDAPG.
  \item \textbf{Case 3}: Always choose iDAPG.
\end{itemize}
Thus, iDAPG is approximately the best choice in most cases.

\section{Experiments} \label{experiments}
In this section, we perform numerical experiments to validate the theoretical convergence properties of iD2A and MiD2A, as well as to compare their practical performance against SOTA algorithms. Due to space limitations, we present only two experiments here: (1) decentralized elastic net regression and (2) decentralized constrained linear regression, both within the framework of decentralized vertical federated learning. Additional experiments can be found in \cref{appendix_experiments}.

\subsection{Experiment I: Decentralized Elastic Net Regression}
Consider a dataset with a raw feature matrix $X' \in \R^{p \times (d-1)}$ and a label vector $y \in \R^p$, let $X = [X', \1_p] \in \R^{p \times d}$. The elastic net regression problem can be formulated as
\eqe{ \label{elastic_net}
  \min_{\theta \in \R^d} \ \frac{1}{2p}\norm{X\theta-y}^2 + \alpha\rho\norm{\theta}_1 + \frac{\alpha(1-\rho)}{2}\norm{\theta}^2,
}
where $\alpha > 0$ and $\rho \in (0, 1)$ are tunable parameters.
In the setting of vertical federated learning, the global feature matrix $X$ is vertically partitioned as $X = [X_1, \cdots, X_n]$, where $X_i \in \R^{p \times d_i}$ (with $\sum_{i=1}^nd_i = d$) represents the local feature matrix of agent $i \in \sI$. Then, we can reformulate \cref{elastic_net} as \cref{main_pro} using the following definitions:
\eqe{ \label{2220}
  &f_i \triangleq \frac{\alpha(1-\rho)}{2}\norm{\cdot}^2, \ g_i \triangleq \alpha\rho\norm{\cdot}_1, \ A_i \triangleq X_i, i \in \sI; \\
  &h \triangleq \frac{1}{2p}\norm{\cdot-y}^2,
}
which clearly satisfy \cref{convex,strong_duality}. Also note that $h^*(w) = \frac{p}{2}\norm{w}^2 + y\T w$ is strongly convex and smooth, thus \cref{2220} matches Case 1 in \cref{tab:complexity_h}. Based on \cref{tab:complexity_h}, we select DCPA and NPGA as the baselines in this experiment. It is important to note that NPGA is a unified algorithmic framework with numerous variants. According to \cite{li2024npga}, all of its variants can solve \cref{2220} with a guarantee of linear convergence. For comparison, we have chosen several representative variants: NPGA-EXTRA, NPGA-Exact Diffusion, and NPGA-I.

We use the California housing prices dataset\footnote{The California housing prices dataset is available in \href{https://inria.github.io/scikit-learn-mooc/python_scripts/datasets_california_housing.html}{scikit-learn}.} to construct the problem. This dataset contains 8 features (excluding the housing price) and a total of 20,640 samples. We choose the first $20$ samples from the California housing prices dataset, resulting in the raw dataset denoted as $(X', y)$, where $X' \in \R^{20 \times 8}$ and $y \in \R^{20}$. We then define the global feature matrix as $X = [X', \1_{20}] \in \R^{20 \times 9}$. We consider a system of $8$ agents, and similar to Experiment I, we also use the Erdos-Renyi model with a connectivity probability of $0.1$ to generate the network topology. Based on the number of agents, we partition $X$ as $X = [X_1, \cdots, X_8]$: $X_i \in \R^{20 \times 1}, i \leq 7; X_8 \in \R^{20 \times 2}$. Additionally, we set $\alpha = 100$ and $\rho = 0.1$.

We use the Erdos-Renyi model \cite{erdos1960evolution} with a connectivity probability of $0.1$ to generate the network topology of agents. Both DCPA and NPGA require a symmetric and doubly stochastic mixing matrix $W \in \R^{n \times n}$, which is associated with the network topology. We generate $W'$ using the Laplacian method with $c = 1$ (where $c > 0$ is a tunable parameter in the Laplacian method; refer to \cite[Remark 2]{li2024npga} for details). Subsequently, we obtain $W$ by setting $W = \frac{1}{2}(\I+W')$. By defining $C = \frac{\I-W}{2}$, we can derive a gossip matrix $C$ that satisfies \cref{G} for iD2A. The parameters for DCPA and NPGA are set based on their theoretical convergence results to achieve optimal convergence rates. For iD2A\footnote{We do not discuss about MiD2A separately, as it is essentially a special case of iD2A: replacing the original gossip matrix $C$ with a new matrix $P_k(C)$.}, the parameters are configured according to \cref{iD2A_complexity_convex,2303}.

The experimental results are presented in \cref{expt2}. According to the figures, we observe that iD2A and MiD2A with $\rho>0$ exhibit significantly lower complexities than DCPA and NPGA across all metrics. Benefiting from Chebyshev acceleration, MiD2A has lower computational complexity (as indicated by the first two graphs) but higher communication complexity compared to iD2A, despite both having theoretical communication complexities of the same order. Therefore, in scenarios where the communication cost is much lower than the computation cost, MiD2A is preferable to iD2A. In contrast, iD2A with $\rho = 0$ demonstrates much higher computational complexity than iD2A with $\rho > 0$, but it converges very quickly w.r.t. the number of communications. This makes it a preferable choice when the communication cost dominates the total cost.

\subsection{Experiment II: Decentralized Constrained Linear Regression}
Unlike classical (unconstrained) regression, constrained regression incorporates additional constraints that arise from the specific application scenarios of the regression model \cite{boyd2018introduction}. For instance, in a housing price forecasting model, the predicted prices should be nonnegative. We consider the following constrained linear regression problem with an $\ell_2$ regularizer:
\eqe{ \label{con_linear_regre}
  \min_{\theta \in \R^d} \ &\frac{1}{2p}\norm{X\theta-y}^2 + \frac{\alpha}{2}\norm{\theta}^2 \\
  \text{s.t.} \ &X\theta \geq 0,
}
where $\alpha > 0$ is a tunable parameter. In this experiment, we assume that $X$ has full row rank. In the context of vertical federated learning, we can reformulate \cref{con_linear_regre} as \cref{main_pro} by introducing the following definitions:
\eqe{ \label{2221}
  &f_i \triangleq \frac{\alpha}{2}\norm{\cdot}^2, \ g_i \triangleq 0, \ A_i \triangleq X_i, i \in \sI; \\
  &h \triangleq \frac{1}{2p}\norm{\cdot-y}^2 + \iota_{\R^p_+},
}
which clearly satisfies \cref{convex}. Also note that $\dom{g} = \R^d$ and $A = X$ has full row rank, hence \cref{strong_duality} holds.
It is not difficult to derive that $h^*(w) = \sum_{i=1}^p h_i^*(w_i)$, where $w_i \in \R$ and $h_i^*(w_i) = \lt\{
  \begin{aligned}
    \frac{p}{2}w_i^2 + y_iw_i,      \  & w_i \geq -\frac{y_i}{p}, \\
    0, \                               & w_i < -\frac{y_i}{p}.
  \end{aligned}\rt.$
Clearly, $h^*$ is differentiable and $p$-smooth but not strongly convex. Consequently, \cref{2221} corresponds to Case 3 in \cref{tab:complexity_h}. According to \cref{tab:complexity_h}, we continue to use DCPA and NPGA as baseline algorithms in this experiments. However, unlike in Experiment II, not all variants of NPGA have established the theoretical guarantee for solving \cref{2221}. Therefore, in this experiment, we select a different set of NPGA variants: NPGA-EXTRA, NPGA-ATC tracking, and NPGA-II.

For this experiment, we also utilize the California housing prices dataset, selecting the first $9$ samples as the raw dataset $(X', y)$, where $X' \in \R^{9 \times 8}$ and $y \in \R^{9}$. We should mention that, the resulting global feature matrix $X = [X', \1_{9}] \in \R^{9 \times 9}$ has full row rank. We also consider a system of $8$ agents and use the same network topology as in Experiment II. Similarly, we partition $X$ as $X = [X_1, \cdots, X_8]$, where $X_i \in \R^{9 \times 1}, i \leq 7; X_8 \in \R^{9 \times 2}$. Additionally, we set $\alpha = 100$.

We adopt the same $W$ and $C$ as in Experiment II, and the parameters for DCPA, NPGA, and iD2A are determined using the same methods as in Experiment II. The parameters of PDPG are set according to \cite[Remark 1]{li2025inexact}.
The experimental results are presented in \cref{expt3}. From the figures, we observe that iD2A and MiD2A, which use iDAPG as the subproblem solver, exhibit significantly lower complexities than DCPA and NPGA across all metrics. Similar to Experiment II, MiD2A has lower computational complexity but higher communication complexity compared to iD2A. However, when PDPG is chosen as the subproblem solver, the performance of iD2A significantly degrades, even falling below that of NPGA-EXTRA. This aligns with the fact that iDAPG has lower theoretical complexity than PDPG, and also highlights the impact of the subproblem solver on the performance of iD2A.

\section{Conclusions} \label{conclusion}
This paper addresses a class of decentralized constraint-coupled optimization problems. Based on a novel dual$^2$ approach, we develop two accelerated algorithms: iD2A and MiD2A. Under certain assumptions, we prove the asymptotic and linear convergence of iD2A and MiD2A, and further demonstrate that they achieve significantly lower communication and computational complexities compared to existing algorithms. Numerical experiments validate these theoretical results, highlighting the superior performance of both algorithms in real-world scenarios. Future work may focus on extending the current convergence results to nonconvex objectives and directed graphs.

{\small
  \bibliographystyle{IEEEtran}
  \bibliography{../public/bib}
}

\begin{IEEEbiography}
  [{\includegraphics[width=1in,height=1.25in,clip,keepaspectratio]{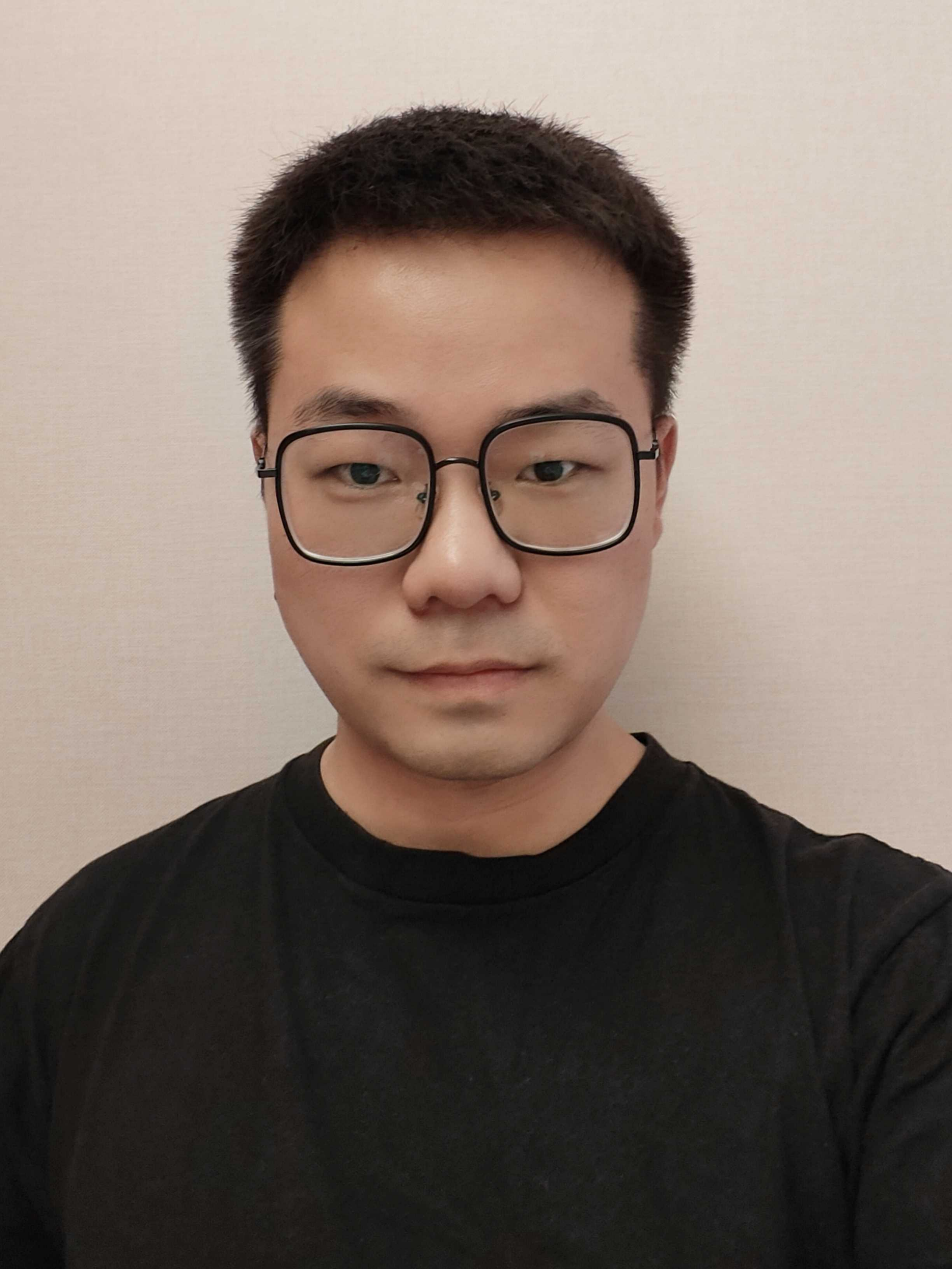}}]
  {Jingwang Li}
  received the B.Mgt. degree in Engineering Management from Huazhong Agricultural University (2019), and the M.Eng. degree in Control Science and Engineering from Huazhong University of Science and Technology (2022), both in Wuhan, China. He was working towards the Ph.D. degree from the Hong Kong University of Science and Technology. His research focuses on optimization and decentralized algorithms.
\end{IEEEbiography}

\begin{IEEEbiography}
  [{\includegraphics[width=1in,height=1.25in,clip,keepaspectratio]{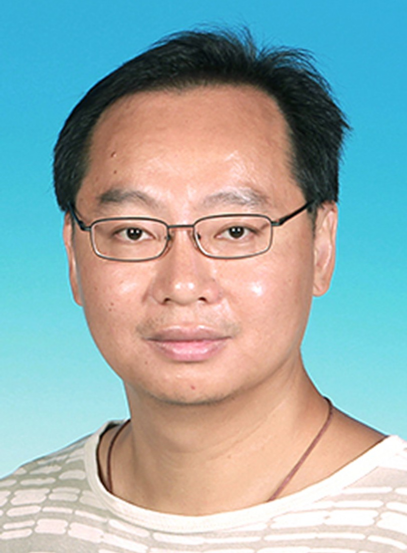}}]
  {Vincent K. N. Lau}
  (Fellow, IEEE) received the B.Eng. degree (Hons.) from The University of Hong Kong in 1992 and the Ph.D. degree from Cambridge University in 1997. He was with Bell Labs from 1997 to 2004 and the Department of Electronic and Computer Engineering (ECE), The Hong Kong University of Science and Technology (HKUST) in 2004. He is currently the Chair Professor and the Founding Director of the Huawei-HKUST Innovation Laboratory, HKUST. His current research interests include robust and delay-optimal cross layer optimization for MIMO/OFDM wireless systems, interference mitigation techniques for wireless networks, massive MIMO, M2M, and network control systems.
\end{IEEEbiography}

\appendices \label{appendix}
\crefalias{section}{appendix}
\section{AcceleratedGossip} \label{appendix_AcceleratedGossip}
\begin{algorithm}[H]
  \caption{\small{AcceleratedGossip \cite{scaman2017optimal}}}
  \label{alg:AccGossip}
  \begin{algorithmic}[1]
    \small
    \Require
    $x, C, K = \lfloor \sqrt{\kappa_C} \rfloor, c_2 = \frac{\kappa_C+1}{\kappa_C-1}, c_3 = \frac{2}{(1+1/\kappa_C)\ove{C}}$
    \Ensure
    $x - \frac{x^K}{a^K}$
    \State $a^0 = 1,a^1 = c_2$
    \State $x^0 = x, x^1 = c_2x(\I-c_3C)$
    \For {$k=0,\dots, K-1$}
    \State $a\+ = 2c_2a^k - a^{k-1}$
    \State $x\+ = 2c_2(\I-c_3C)x^k - x^{k-1}$
    \EndFor
  \end{algorithmic}
\end{algorithm}

\section{Proof of \cref{1552}} \label{appendix_1552}
\begin{appendix_proof}[\cref{1552}]
  Consider the Lagrangian dual function of \cref{main_pro}:
  \eqe{ \label{0024}
    \phi(\lambda) =& \inf_{x_1,\cdots, x_n; y}\bigg(\sum_{i=1}^n\pa{f_i(x_i) + g_i(x_i)} + h(y) \\
    &+ \dotprod{\lambda, \sum_{i=1}^{n}A_ix_i - y}\bigg) \\
    =& \inf_{x_1,\cdots, x_n}\pa{\sum_{i=1}^n\pa{f_i(x_i) + g_i(x_i) + \lambda\T A_ix_i}} \\
    &+ \inf_{y}\pa{h(y) - \lambda\T y} \\
    =& \sum_{i=1}^n\pa{\inf_{x_i}\pa{f_i(x_i) + g_i(x_i) + \lambda\T A_ix_i}} - h^*(\lambda) \\
    =& \sum_{i=1}^n\phi'_i(\lambda) - h^*(\lambda),
    \nonumber
  }
  where $\lambda \in \R^p$ is the dual variable. Then, the Lagrangian dual problem of \cref{main_pro} can be formulated as \cref{dual_pro_eq}, where $\phi_i = -\phi'_i$. According to \cref{convex,strong_duality}, the solution to \cref{dual_pro_eq} exists.

  Recall that \cref{G} ensures that $C$ is symmetric and positive semidefinite, allowing it to be decomposed as $C = \sqrt{C}\sqrt{C}$ via eigenvalue decomposition. Apparently, $\sqrt{C}$ is also symmetric, positive semidefinite, and satisfies $\Null{\sqrt{C}} = \Span{\1_n}$. Since both $C$ and $\sqrt{C}$ are symmetric and $\Null{C} = \Null{\sqrt{C}}$, it follows that $\Range{C} = \Range{\sqrt{C}}$.

  Let $\ml = \col{\lambda} \in \R^{np}$ and $\sqrt{\mC} = \sqrt{C} \otimes \I_p$. It is apparent that \cref{dual_pro_eq} is equivalent to \cref{dual_pro_consensus} because $\Null{\sqrt{C}} = \Span{\1_n}$, which implies that the solution to \cref{dual_pro_consensus} also exists.

  Consider the augmented Lagrangian of \cref{dual_pro_consensus}:
  \eqe{
    \cL_{\rho}(\ml, \my) = \mph(\ml) + \mh^*(\ml) + \my\T\sqrt{\mC}\ml + \frac{\rho}{2}\ml\T\mC\ml,
  }
  where $\mC = C \otimes \I_p$, $\my \in \R^{np}$ is the dual variable, and $\rho \geq 0$ is the augmented parameter. The augmented Lagrangian dual function is then defined as
  \eqe{
  F'_{\rho}(\my) &= \inf_{\ml} \cL_{\rho}(\ml, \my) = -H^*\pa{-\sqrt{\mC}\my},
  }
  where
  \eqe{
    H(\ml) = \mph(\ml) + \mh^*(\ml) + \frac{\rho}{2}\ml\T\mC\ml.
  }
  Then, the augmented Lagrangian dual problem of \cref{dual_pro_consensus} is given by \cref{final_pro}, where $\fr = -F'_{\rho}$.

  We now prove that the solution to \cref{final_pro} exists.
  According to \cref{convex}, $f_i+g_i$ is closed proper and $\mu_i$-strongly convex, and $h$ is closed proper convex. Consequently, $\pa{f_i+g_i}^*$ is convex and $\frac{1}{\mu_i}$-smooth by \cref{conjugate_subdiff}, and $h^*$ (and thus $\mh^*$) is closed proper convex by \cite[Theorem E.1.1.2]{hiriart2004fundamentals}.
  Note that
  \eqe{
    \phi_i(\lambda) &= -\inf_{x_i}\pa{f_i(x_i) + g_i(x_i) + \lambda\T A_ix_i} \\
    &= \sup_{x_i}\pa{-\lambda\T A_ix_i - \pa{f_i(x_i) + g_i(x_i)}} \\
    &= \pa{f_i+g_i}^*(-A_i\T\lambda),
  }
  hence $\phi_i$ is convex and continuously differentiable. Furthermore, we have
  \eqe{
    &\dotprod{\nabla \phi_i(\lambda)-\nabla \phi_i(\lambda'), \lambda-\lambda'} \\
    =& \dotprod{\nabla (f_i + g_i)^*(-A_i\T\lambda)-\nabla (f_i + g_i)^*(-A_i\T\lambda'), -A_i\T(\lambda-\lambda')} \\
    \leq& \frac{1}{\mu_i}\norm{A_i\T(\lambda-\lambda')}^2 \\
    \leq& \frac{\os^2(A_i)}{\mu_i}\norm{\lambda-\lambda'}^2, \ \forall \lambda, \lambda' \in \R^p,
    \nonumber
  }
  thus $\phi_i$ is also $\frac{\os^2(A_i)}{\mu_i}$-smooth, which implies that $\mph$ is convex and $\max_{i \in \sI}\frac{\os^2(A_i)}{\mu_i}$-smooth. Additionally, since $\mC \geq 0$ under \cref{G}, it follows that $q(\ml) = \frac{\rho}{2}\ml\T\mC\ml$ is convex and smooth. Given that $\mph$, $\mh^*$ and $q$ are all closed proper convex, and $\dom{\mph} \cap \dom{\mh^*} \cap \dom{q} = \dom{\mh^*} \neq \emptyset$, $H$ is also closed proper convex \cite[Proposition B.2.1.1]{hiriart2004fundamentals}.

  Notice that we can regard \cref{final_pro} as the standard Lagrangian dual problem of
  \eqe{ \label{1445}
    \min_{\ml \in \R^{np}} \ &\mph(\ml) + \mh^*(\ml) + \frac{\rho}{2}\ml\T\mC\ml \\
    \text{s.t.} \ &\sqrt{\mC}\ml = \0,
  }
  which can be rewritten as
  \eqe{ \label{14450}
    \min_{\ml \in \R^{np}} \ H(\ml) + \iota_{\set{0}}\pa{\sqrt{\mC}\ml},
  }
  where $\iota_{\set{0}}$ is also closed proper convex. By applying \cite[Theorem 31.3]{rockafellar1970convex} to \cref{14450}, we can conclude that $\my^* \in \R^{np}$ is a solution of \cref{final_pro} if there exists $\ml^* \in \R^{np}$ such that $(\ml^*, \my^*)$ satisfies the KKT conditions:
  \eqa{
    \0 &\in \partial H(\ml^*) + \sqrt{\mC}\my^* \nonumber \\
    &= \nabla \mph(\ml^*) + \partial\mh^*(\ml^*) + \rho\mC\ml^* + \sqrt{\mC}\my^*,  \label{1450} \\
    \0 &= \sqrt{\mC}\ml^*, \label{1507}
  }
  where the first equality follows from \cref{subdiff}.

  Notice that the solution of \cref{dual_pro_eq} exists under \cref{convex,strong_duality}. Let $\lambda^* \in \R^p$ be any solution of \cref{dual_pro_eq}. According to \cite[Theorem 27.1]{rockafellar1970convex} and \cref{subdiff}, we have
  \eqe{
    \0 \in \sum_{i=1}^n \nabla \phi_i(\lambda^*) + \partial h^*(\lambda^*).
  }
  Equivalently, there exists $S_{h^*}(\lambda^*) \in \partial h^*(\lambda^*)$ such that
  \eqe{ \label{1511}
    \0 = \sum_{i=1}^n \phi_i(\lambda^*) + S_{h^*}(\lambda^*).
  }
  Let $\ml' = \1_n \otimes \lambda^*$, we have
  \eqe{ \label{15110}
  \nabla \mph(\ml') &= \coll{\nabla \phi_1(\lambda^*), \cdots,\nabla \phi_n(\lambda^*)}, \\
  S_{\mh^*}(\ml') &= \coll{\frac{1}{n}S_{h^*}(\lambda^*), \cdots, \frac{1}{n}S_{h^*}(\lambda^*)} \in \partial\mh^*(\ml').
  }
  Clearly, $\ml'$ satisfies \cref{1507} under \cref{G}.
  Additionally, combining \cref{1511,15110} yields
  \eqe{
    \pa{\1_n\T \otimes \I_p}\pa{\nabla \mph(\ml') + S_{\mh^*}(\ml')} = \0,
  }
  which implies that $\nabla \mph(\ml') + S_{\mh^*}(\ml') \in \Null{\1_n\T \otimes \I_p} = \Range{\sqrt{\mC}}$, hence there exists $\mz \in \R^{np}$ such that
  \eqe{
    \nabla \mph(\ml') + S_{\mh^*}(\ml') = \sqrt{\mC}\mz.
  }
  Consequently, $(\ml', -\mz)$ satisfies \cref{1450,1507}, which implies that $-\mz$ is a solution of \cref{final_pro}.
\end{appendix_proof}

\section{Proof of \cref{2355}} \label{appendix_2355}
\begin{appendix_proof}[\cref{2355}]
  In the proof of \cref{1552}, we have shown that: (1) $\mph + \mh^*$ is closed proper convex under \cref{G,convex}, implying that \cref{dual_pro_consensus} can be seen as a special case of the linearly constrained optimization problem studied in \cite{li2025smooth}; and (2) the solution to the dual of \cref{1445} exists. Since the KKT conditions of \cref{dual_pro_consensus} and \cref{1445} are identical, the latter implies the solution to the dual of \cref{dual_pro_consensus} also exists. Therefore, we can immediately complete the proof by applying \cite[Theorem 1]{li2025smooth} to \cref{dual_pro_consensus}.
\end{appendix_proof}

\section{Proof of \cref{23551}} \label{appendix_23551}
\begin{appendix_proof}[\cref{23551}]
  Recall that $\mph + \mh^*$ is closed proper convex under \cref{G,convex}, and note that $\fr$ is differentiable everywhere implies $\dom{\fr} = \R^{np}$. Then, following the same approach used to prove the third conclusion of \cite[Theorem 1]{li2025smooth}, we can also show that, for any $\my \in \R^{np}$, there exists at least one solution $\ml^+ \in \R^{np}$ of $\min_{\ml \in \R^{np}} \cL_{\rho}(\ml, \my)$, and $\nabla \fr(\my) = -\sqrt{\mC}\ml^+$ holds for any $\ml^+$. According to \cite[Theorem 27.1]{rockafellar1970convex} and \cref{subdiff}, $\ml^+$ satisfies
  \eqe{ \label{2341}
    \0 \in \nabla \mph(\ml^+) + \partial \mh^*(\ml^+) + \rho\mC\ml^+ + \sqrt{\mC}\my.
  }

  Also note that $f+g$ is closed proper and $\mu_f$-strongly convex, then $(f+g)^*$ is convex and $\frac{1}{\mu_f}$-smooth by \cref{conjugate_smooth}. We also have
  \eqe{
    \mph(\ml) = \sup_{\mx} -\ml\T\A\mx - \pa{f(\mx) + g(\mx)} = \pa{f+g}^*(-\A\T\ml).
    \nonumber
  }
  then $\nabla \mph(\ml^+) = -\A\nabla\pa{f+g}^*(-\A\T\ml^+)$. Let $\mx^+ = \nabla\pa{f+g}^*(-\A\T\ml^+)$, substituting $\nabla \mph(\ml^+) = -\A\mx^+$ in \cref{2341} gives
  \eqe{ \label{2351}
    \0 &\in \A\mx^+ - \partial \mh^*(\ml^+) - \rho\mC\ml^+ - \sqrt{\mC}\my, \\
    \0 &\in \A\T\ml^+ + \nabla f(\mx^+) + \partial g(\mx^+),
  }
  where the last equality follows from \cref{conjugate_subdiff} and \cref{subdiff}.
  According to \cite[Proposition 19.20]{bauschke2017convex}, $(\mx^+, \ml^+)$ that satisfies \cref{2351} is necessarily a solution of \cref{saddle_pro} with $\mz = \sqrt{\mC}\my$.
\end{appendix_proof}

\section{Proof of \cref{relation_solutions}} \label{appendix_relation_solutions}
\begin{appendix_proof}[\cref{relation_solutions}]
  Since $\my^*$ is a solution of \cref{final_pro}, according to \cref{23551}, there exists a solution $(\mx^*, \ml^*) \in \R^d \times \R^{np}$ of \cref{saddle_pro} with $\mz = \sqrt{\mC}\my^*$ such that
  \eqe{ \label{1806}
    \0 = \nabla \fr(\my^*) = -\sqrt{\mC}\ml^*.
  }
  According to \cite[Proposition 19.20]{bauschke2017convex} and \cref{subdiff}, $(\mx^*, \ml^*)$ satisfies
  \eqa{
    \0 &\in \A\mx^* - \partial \mh^*(\ml^*) - \rho\mC\ml^* - \sqrt{\mC}\my, \label{23510} \\
    \0 &\in \A\T\ml^* + \nabla f(\mx^*) + \partial g(\mx^*), \label{23511}
  }
  Recall that $\Null{\sqrt{C}} = \Span{\1_n}$, thus \cref{1806} implies that $\ml^* = \1_n \otimes \lambda^*$ with $\lambda^* \in \R^p$.
  Substituting $\ml^* = \1_n \otimes \lambda^*$ in \cref{23510,23511} and left multiplying \cref{23510} with $\1_n\T \otimes \I_p$ gives
  \eqe{ \label{16330}
    \0 &\in A\T\lambda^* + \nabla f(\mx^*) + \partial g(\mx^*), \\
    \0 &\in A\mx^* - \partial h^*(\lambda^*).
  }
  According to \cite[Theorem 31.3]{rockafellar1970convex}, $\mx^*$ that satisfies \cref{16330} is necessarily a solution of \cref{main_pro}. Furthermore, $\mx^*$ is unique since $f$ is strongly convex.
\end{appendix_proof}

\section{Proof of \cref{1717}} \label{appendix_1717}
\begin{appendix_proof}[\cref{1717}]
  Under the error condition \cref{1551}, \cref{analyze_form} can be viewed as a special case of the inexact APG proposed in \cite{schmidt2011convergence}. Consequently, by applying \cite[Proposition 2]{schmidt2011convergence}, we derive the following lemma.
  \begin{lemma}* \label{1336}
    Assume that \cref{strong_duality,G,convex,0129} hold, $\beta_k = \frac{k}{k+3}$, and $\ml\+$ meets the condition \cref{1551}. Then, $\my^k$ generated by \cref{analyze_form} satisfies ($\forall k \geq 1$)
    \eqe{
      \fr(\my^k) - \fr(\my^*) \leq \frac{2L_{\fr}}{(k+1)^2}\pa{\norm{\my^0-\my^*} + 2\sum_{i=1}^k\frac{i\varepsilon_i}{L_{\fr}}}^2.
      \nonumber
    }
  \end{lemma}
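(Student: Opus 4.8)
The plan is to recognize \cref{analyze_form}---the analysis-friendly reformulation of iD2A---as an instance of the inexact accelerated proximal-gradient (APG) method applied to the smooth convex problem \cref{final_pro}, and then to invoke the convergence guarantee of \cite[Proposition 2]{schmidt2011convergence} as a black box. First I would fix the target problem as $\min_{\my} \fr(\my)$ and observe that, under \cref{0129}, $\fr$ is convex and $L_{\fr}$-smooth with \emph{no} nonsmooth component. Consequently the proximal map degenerates to the identity, so \cref{analyze_form} reduces to inexact gradient descent equipped with the Nesterov momentum sequence $\beta_k = \frac{k}{k+3}$, which is precisely the accelerated scheme covered by \cite{schmidt2011convergence} (matching it up to the equivalent reindexing of \cite{tseng2008accelerated} noted earlier in the text).

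The essential step is to identify the per-iteration gradient error and bound it by $\varepsilon_{k+1}$. By \cref{23551}, the exact gradient at the extrapolated point $\mv^k$ equals $\nabla \fr(\mv^k) = -\sqrt{\mC}\ml^*_{k+1}$ for any exact inner solution $\ml^*_{k+1} \in \bm{\Lambda}^*_{k+1}$, whereas the update $\my\+ = \mv^k + \frac{1}{L_{\fr}}\sqrt{\mC}\ml\+$ replaces this gradient by the inexact surrogate $-\sqrt{\mC}\ml\+$. Hence the gradient error committed at step $k+1$ is $\sqrt{\mC}\ml^*_{k+1} - \sqrt{\mC}\ml\+$, and the error condition \cref{1551} furnishes exactly the bound $\norm{\sqrt{\mC}\ml\+ - \sqrt{\mC}\ml^*_{k+1}} \leq \varepsilon_{k+1}$ on its norm. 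Because the objective carries no inexactly-solved proximal subproblem, the proximal-error contribution in Schmidt's estimate vanishes identically, leaving only the accumulated gradient error.

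With these two ingredients, \cite[Proposition 2]{schmidt2011convergence} specializes to the gradient-error-only form, and substituting $\norm{\sqrt{\mC}\ml^i - \sqrt{\mC}\ml^*_i} \leq \varepsilon_i$ into it yields exactly
\eqe{
  \fr(\my^k) - \fr(\my^*) \leq \frac{2L_{\fr}}{(k+1)^2}\pa{\norm{\my^0-\my^*} + 2\sum_{i=1}^k\frac{i\varepsilon_i}{L_{\fr}}}^2
  \nonumber
}
for all $k \geq 1$, which is the claimed bound. The only genuine care needed---and the reason this lemma is flagged with ``*''---is the bookkeeping: one must match the index convention (the inner solution produced at outer step $i$ carries error $\varepsilon_i$ and enters the running sum with weight $i/L_{\fr}$) and confirm that $\beta_k = \frac{k}{k+3}$ coincides with the momentum scheme under which \cite[Proposition 2]{schmidt2011convergence} is stated. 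No estimate beyond \cref{23551} and the error condition \cref{1551} is required, so this verification is routine rather than an analytical obstacle.
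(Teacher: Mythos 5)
Your proposal is correct and follows essentially the same route as the paper: the paper likewise treats \cref{analyze_form} as an instance of the inexact APG of Schmidt et al., identifies the gradient error at $\mv^k$ with the quantity bounded by \cref{1551} via \cref{23551}, and invokes \cite[Proposition 2]{schmidt2011convergence} directly (the lemma is starred precisely because this reduction is considered routine). Your additional bookkeeping about the vanishing proximal-error term and the index convention is consistent with, and slightly more explicit than, what the paper records.
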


  From \cref{1336}, it is evident that $\my^k$ generated by \cref{analyze_form} converges provided that the series $\set{k\epsilon_k}$ satisfies certain conditions, such as summability.

  According to \cref{1644} and $\delta > 0$, we have $\sum_{i=1}^ki\varepsilon_i < \infty$. It follows from \cref{1336} that $\fr(\my^k) - \fr(\my^*) = \bO{\frac{1}{k^2}}$. This implies that $\my^k$ converges to a solution of \cref{final_pro}, so does $\mv^k$. As a result, the conclusion follows from \cref{relation_solutions}, \cref{1644}, and the equivalence between \cref{analyze_form} and iD2A.
\end{appendix_proof}

\section{Proof of \cref{F_rho_smooth}} \label{appendix_F_rho_smooth}
\begin{appendix_proof}[\cref{F_rho_smooth}]
  The first case is proved in \cref{2355}.

  Recall that $\fr(\my) = H^*\pa{-\sqrt{\mC}\my}$. Assume that $H$ is $\mu_H$-strongly convex, we first prove that $\fr$ is $\frac{1}{\max\pa{\rho, \frac{\mu_H}{\ove{C}}}}$-smooth. We have shown in the proof of \cref{1552} that $H$ is closed proper convex, according to \cref{conjugate_smooth}, $H^*$ is $\frac{1}{\mu_H}$-smooth, then $\fr$ is continuously differentiable. Using the smoothness of $H^*$ gives
  \eqe{
    &\dotprod{-\nabla \fr(\my)+\nabla \fr(\my'), \my-\my'} \\
    =&\dotprod{\nabla H^*\pa{-\sqrt{\mC}\my} - \nabla H^*\pa{-\sqrt{\mC}\my'}, -\sqrt{\mC}\pa{\my-\my'}} \\
    \leq& \frac{1}{\mu_H}\norm{\sqrt{\mC}\pa{\my-\my'}}^2 \\
    \leq& \frac{\ove{C}}{\mu_H}\norm{\my-\my'}^2, \ \forall \my, \my' \in \R^{np},
  }
  hence $\fr$ is $\frac{\ove{C}}{\mu_H}$-smooth. Also note that $\fr$ is $\frac{1}{\rho}$-smooth when $\rho>0$, thus $\fr$ is $\frac{1}{\max\pa{\rho, \frac{\mu_H}{\ove{C}}}}$-smooth for $\rho \geq 0$. Therefore, to finish the proof, we only need to prove that $H$ is strongly convex for the last two cases.

  For the second case, $\mh^*$ is clearly $\frac{\mu_{h^*}}{n}$-strongly convex, so is $H$.

  For the third case, $f_i+g_i$ is $\mu_i$-strongly convex and $L_i$-smooth. According to \cref{conjugate_smooth,conjugate_strong_convex}, $(f_i+g_i)^*$ is $\frac{1}{L_i}$-strongly convex and $\frac{1}{\mu_i}$-smooth on $\R^p$, hence $\phi_i$ is continuously differentiable. Using the strong convexity of $(f_i + g_i)^*$ yields
  \eqe{
    &\dotprod{\nabla \phi_i(\lambda)-\nabla \phi_i(\lambda'), \lambda-\lambda'} \\
    =& \dotprod{\nabla (f_i + g_i)^*(-A_i\T\lambda)-\nabla (f_i + g_i)^*(-A_i\T\lambda'), -A_i\T(\lambda-\lambda')} \\
    \geq& \frac{1}{L_i}\norm{A_i\T(\lambda-\lambda')}^2 \\
    \geq& \frac{\mins^2(A_i)}{L_i}\norm{\lambda-\lambda'}^2, \ \forall \lambda, \lambda' \in \R^p,
  }
  where $\frac{\mins^2(A_i)}{L_i} > 0$ since $A_i$ has full row rank. Therefore, $\phi_i$ is $\frac{\mins^2(A_i)}{L_i}$-strongly convex, which implies that $\phi$ is $\min_{i \in \sI}\frac{\mins^2(A_i)}{L_i}$-strongly convex, so is $H$.
\end{appendix_proof}

\section{Proof of \cref{F_rho_convex}} \label{appendix_F_rho_convex}
\begin{appendix_proof}[\cref{F_rho_convex}]
  We have proved in \cref{1552} that $\mph$ is convex and $\max_{i \in \sI}\frac{\os^2(A_i)}{\mu_i}$-smooth.
  Note that \cref{hs_smooth} implies that $\mh^*$ is $\frac{L_{h^*}}{n}$-smooth, hence $H$ is $L_H$-smooth, where $L_H = \max_{i \in \sI}\frac{\os^2(A_i)}{\mu_i} + \rho\ove{C}+\frac{L_{h^*}}{n}$.

  In the proof of \cref{F_rho_smooth}, we have proved that $H$ is strongly convex for the first two cases and $\fr$ is smooth for all the three cases of \cref{F_rho_convex}. We first prove that $H$ is also strongly convex for the third case.
  For the third case, $f+g$ is $\mu_f$-strongly convex and $L_f$-smooth. According to \cref{conjugate_smooth,conjugate_strong_convex}, $(f+g)^*$ is $\frac{1}{L_f}$-strongly convex and $\frac{1}{\mu_f}$-smooth.
  Recall that
  \eqe{
    \mph(\ml) = (f+g)^*(-\A\T\ml),
  }
  hence $\mph$ is continuously differentiable. Let
  \eqe{
    \mph_{\rho}(\ml) = \mph(\ml) + \frac{\rho}{2}\ml\T\mC\ml,
  }
  using the strong convexity of $(f+g)^*$ yields
  \eqe{ \label{2126}
    &\dotprod{\nabla \mph_{\rho}(\ml) - \nabla \mph_{\rho}(\ml'), \ml-\ml'} \\
    =& \dotprod{\nabla (f+g)^*(-\A\T\ml)-\nabla (f+g)^*(-\A\T\ml'), -\A\T\pa{\ml-\ml'}} \\
    &+ \rho(\ml-\ml')\T\mC(\ml-\ml') \\
    \geq& \frac{1}{L_f}(\ml-\ml')\T\lt(\A\A\T+\rho L_f\mC\rt)(\ml-\ml') \\
    \geq& \frac{\mine{\A\A\T + \rho L_f\mC}}{L_f}\norm{\ml-\ml'}^2, \ \forall \ml, \ml' \in \R^{np}.
  }
  According to \cite[Lemma 4]{li2024npga}, $\A\A\T + \rho L_f\mC > 0$ if \cref{G} holds, $A$ has full row rank, and $\rho > 0$, hence $\frac{\mine{\A\A\T + \rho L_f\mC}}{L_f} > 0$, which implies that $\mph_{\rho}$ is $\frac{\mine{\A\A\T + \rho L_f\mC}}{L_f}$-strongly convex.
  Also note that $A_i$ has full row rank implies that $A$ have full row rank, hence \cref{2126} also holds for the second case, which implies that $\mph_{\rho}$ is $\max\pa{\min_{i \in \sI}\frac{\mins^2(A_i)}{L_i}, \frac{\mine{\A\A\T + \rho L_f\mC}}{L_f}}$-strongly convex.

  Therefore, $H$ is strongly convex and smooth under \cref{G,convex,hs_smooth,1643}, which implies that $H^*$ is $\frac{1}{L_H}$-strongly convex and $\frac{1}{\mu_H}$-smooth.
  Recall that $\fr(\my) = H^*\pa{-\sqrt{\mC}\my}$, using the strong convexity of $H^*$ gives
  \eqe{
    &\dotprod{\nabla \fr(\my)-\nabla \fr(\my'), \my-\my'} \\
    =&\dotprod{\nabla H^*\pa{-\sqrt{\mC}\my} - \nabla H^*\pa{-\sqrt{\mC}\my'}, -\sqrt{\mC}\pa{\my-\my'}} \\
    \geq& \frac{1}{L_H}\norm{\sqrt{\mC}\pa{\my-\my'}}^2 \\
    \geq& \frac{\ue{C}}{L_H}\norm{\my-\my'}^2, \ \forall \my, \my' \in \Range{\sqrt{\mC}},
  }
  where the last inequality follows from $\my-\my' \in \Range{\sqrt{\mC}}$. Therefore, $\fr$ is $\frac{\ue{C}}{L_H}$-strongly convex on $\Range{\sqrt{\mC}}$.
\end{appendix_proof}

\section{Proof of \cref{outer_convergence_SC}} \label{appendix_outer_convergence_SC}
\begin{appendix_proof}[\cref{outer_convergence_SC}]
  We first study the convergence of $\my^k$.
  \begin{lemma} \label{1135}
    Assume that \cref{strong_duality,G,convex,hs_smooth,1643} hold, $\ml\+$ meets the condition \cref{1551}, and $\mv^0 = \my^0 = \0$, then $\my^k$ generated by \cref{analyze_form} satisfies
    \eqe{
      \fr(\my^k) - \fr(\my^*) \leq \lt(1-\frac{1}{\sqrt{\kappa_{\fr}}}\rt)^k\bigg(\sqrt{2(\fr(\my^0) - \fr(\my^*))} \\
      + \sqrt{\frac{2}{\mu_{h_\rho}}}\mathcal{E}_k\bigg)^2, \ \forall k \geq 1,
      \nonumber
    }
    where $\kappa_{\fr} = \frac{L_{h_\rho}}{\mu_{h_\rho}}$ and $\mathcal{E}_k = \sum_{i=1}^k\lt(1-\sqrt{\frac{1}{\kappa_{\fr}}}\rt)^{-i/2}\varepsilon_i$.
  \end{lemma}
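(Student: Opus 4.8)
The plan is to view \cref{analyze_form} as an inexact accelerated (proximal) gradient iteration applied to the unconstrained smooth problem \cref{final_pro}, and then invoke the strongly convex counterpart of the inexact accelerated gradient estimate that was used to establish \cref{1336}. By \cref{F_rho_convex}, under \cref{strong_duality,G,convex,hs_smooth,1643} the function $\fr$ is $L_{\fr}$-smooth on all of $\R^{np}$ and $\mu_{\fr}$-strongly convex on $V := \Range{\sqrt{\mC}}$, with $\kappa_{\fr} = L_{\fr}/\mu_{\fr}$ (this $\mu_{\fr}$ is precisely the parameter written $\mu_{h_\rho}$ in the statement). Since $\fr$ carries no nonsmooth term, the only inexactness is in the gradient: by \cref{23551} the exact gradient at $\mv^k$ equals $-\sqrt{\mC}\ml^*_{k+1}$, whereas \cref{analyze_form} uses $-\sqrt{\mC}\ml\+$, so the error condition \cref{1551} bounds the gradient error in norm by $\varepsilon_{k+1}$.

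The principal difficulty, and the reason this lemma is not a verbatim citation, is that the strong convexity of $\fr$ holds only on the subspace $V$, not on $\R^{np}$; indeed $\fr(\my) = H^*(-\sqrt{\mC}\my)$ is flat along $\Null{\sqrt{\mC}}$. I would resolve this by a subspace-invariance argument. First I would show by induction that every iterate lies in $V$: the initialization gives $\my^0 = \mv^0 = \0 \in V$; the update $\my\+ = \mv^k + \frac{1}{L_{\fr}}\sqrt{\mC}\ml\+$ adds $\frac{1}{L_{\fr}}\sqrt{\mC}\ml\+ \in V$ to $\mv^k \in V$; and $\mv\+ = \my\+ + \beta_k(\my\+ - \my^k)$ is an affine combination of points already in $V$. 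The unique minimizer $\my^*$ likewise lies in $V$, and \cref{23551} shows that $\nabla\fr$ maps $V$ into $V$. Consequently the restriction of $\fr$ to $V$ is a genuinely $\mu_{\fr}$-strongly convex, $L_{\fr}$-smooth function on the Euclidean space $V$, and \cref{analyze_form} is an honest inexact accelerated gradient method run entirely inside $V$ with gradient errors bounded by $\varepsilon_k$.

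With the iteration thus reduced to a genuinely strongly convex smooth minimization on $V$, I would apply the strongly convex version of the inexact accelerated gradient bound (the strongly convex analogue of \cite[Proposition 2]{schmidt2011convergence} used for \cref{1336}), with the constant momentum $\beta_k = \frac{\sqrt{\kappa_{\fr}}-1}{\sqrt{\kappa_{\fr}}+1}$ appropriate to the strongly convex regime and the initialization $\mv^0 = \my^0 = \0$. This directly produces the contraction factor $\lt(1-\frac{1}{\sqrt{\kappa_{\fr}}}\rt)^k$, the initial term $\sqrt{2(\fr(\my^0)-\fr(\my^*))}$, and the accumulated-error term $\sqrt{2/\mu_{\fr}}\,\mathcal{E}_k$ with $\mathcal{E}_k = \sum_{i=1}^k \lt(1-\sqrt{1/\kappa_{\fr}}\rt)^{-i/2}\varepsilon_i$.

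The main obstacle is the subspace-invariance step together with confirming that restricted strong convexity on $V$ suffices for each inequality in the underlying inexact-accelerated-gradient analysis; once the iterates, the minimizer, and every computed gradient are confined to $V$, the strongly convex estimate transfers without change. A secondary, easily dispatched subtlety is matching the gradient error measured in the dual$^2$ space by \cref{1551} to the error notion of the inexact-gradient framework, which is immediate here precisely because $\fr$ has no proximal component.
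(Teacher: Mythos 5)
Your proposal is correct and follows essentially the same route as the paper: the paper's proof likewise casts \cref{analyze_form} as an inexact AGD on \cref{final_pro}, invokes the strongly convex inexact-AGD estimate of \cite[Proposition 4]{schmidt2011convergence}, and handles the fact that $\fr$ is strongly convex only on $\Range{\sqrt{\mC}}$ by observing that the initialization $\mv^0=\my^0=\0$ keeps all iterates (and $\my^*$, and their linear combinations) in that subspace. Your subspace-invariance induction is just a slightly more explicit rendering of that same observation.
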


  \begin{proof}
    See \cref{appendix_1135}.
  \end{proof}

  Based on \cref{1135}, we can easily verify that $\fr(\my^k) - \fr(\my^*)$ will converge linearly to $0$ with a rate $1-\frac{1}{\sqrt{\kappa_{\fr}}}$ if $\varepsilon_k^2$ decreases linearly to $0$ with a rate $\theta_{\ml} < 1-\frac{1}{\sqrt{\kappa_{\fr}}}$. We now proceed to bound $\norm{\mx\+-\mx^*}^2$ in terms of $\fr(\my^k) - \fr(\my^*)$.

  \begin{lemma} \label{1136}
    Assume that \cref{strong_duality,G,convex,hs_smooth,1643} hold, $\mx\+$ meets the condition \cref{x_error_condition}, then $\mx\+$ generated by \cref{analyze_form} satisfies
    \eqe{
      \norm{\mx\+-\mx^*}^2 \leq \frac{4\os^2(\A)\ove{C}}{\mu_f^2\mu_{H}^2\mu_{\fr}}\bigg(\frac{2\sqrt{\kappa_{\fr}}}{\sqrt{\kappa_{\fr}}+1}\sqrt{\Gamma^k} \\
      + \frac{\sqrt{\kappa_{\fr}}-1}{\sqrt{\kappa_{\fr}}+1}\sqrt{\Gamma^{k-1}}\bigg)^2 + 2e_{\mx,k+1}, \ \forall k \geq 1,
    }
    where $\Gamma^k = \fr(\my^k) - \fr(\my^*)$.
  \end{lemma}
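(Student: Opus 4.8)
The plan is to control $\norm{\mx\+-\mx^*}^2$ by separating the inexactness of the subproblem solver from the drift of the exact subproblem solution. First I would invoke the elementary bound $\norm{\mx\+-\mx^*}^2 \leq 2\norm{\mx\+-\mx^*_{k+1}}^2 + 2\norm{\mx^*_{k+1}-\mx^*}^2$, and use the error condition \cref{x_error_condition} to replace the first term by $2e_{\mx,k+1}$. This reduces the lemma to bounding $\norm{\mx^*_{k+1}-\mx^*}$, the distance between the exact primal solution of the subproblem that \cref{analyze_form} solves at the outer iterate $\mv^k$ and the true primal optimum $\mx^*$.

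Next I would transfer this primal drift into a dual drift through the representation established in the proof of \cref{23551}: the exact primal solution associated with any dual point $\ml$ is $\nabla\pa{f+g}^*(-\A\T\ml)$. Applying this to both $\mx^*_{k+1}$ (with dual $\ml^*_{k+1}$) and $\mx^*$ (with dual $\ml^*$), and using that $\nabla\pa{f+g}^*$ is $\frac{1}{\mu_f}$-Lipschitz because $f+g$ is $\mu_f$-strongly convex, I obtain $\norm{\mx^*_{k+1}-\mx^*} \leq \frac{\os(\A)}{\mu_f}\norm{\ml^*_{k+1}-\ml^*}$. I would then express the dual solutions themselves via the argmin characterization $\nabla H^*(z) = \arg\min_{\ml}\{H(\ml) - \dotprod{z, \ml}\}$ used in the proof of \cref{1552}, namely $\ml^*_{k+1} = \nabla H^*(-\sqrt{\mC}\mv^k)$ and $\ml^* = \nabla H^*(-\sqrt{\mC}\my^*)$; these are well defined and single-valued because $H$ is $\mu_H$-strongly convex by \cref{F_rho_convex}, so $\nabla H^*$ is $\frac{1}{\mu_H}$-Lipschitz. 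Since $\os(\sqrt{\mC}) = \sqrt{\ove{C}}$, this yields $\norm{\ml^*_{k+1}-\ml^*} \leq \frac{\sqrt{\ove{C}}}{\mu_H}\norm{\mv^k-\my^*}$, and chaining the two estimates reduces everything to bounding $\norm{\mv^k-\my^*}$.

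Finally I would expand the extrapolation step. In the strongly convex case $\beta_{k-1}$ is the constant $\frac{\sqrt{\kappa_{\fr}}-1}{\sqrt{\kappa_{\fr}}+1}$, so $\mv^k = \my^k + \beta_{k-1}\pa{\my^k - \my^{k-1}}$ gives, after a triangle inequality and $1+\beta_{k-1} = \frac{2\sqrt{\kappa_{\fr}}}{\sqrt{\kappa_{\fr}}+1}$,
\[
\norm{\mv^k-\my^*} \leq \frac{2\sqrt{\kappa_{\fr}}}{\sqrt{\kappa_{\fr}}+1}\norm{\my^k-\my^*} + \frac{\sqrt{\kappa_{\fr}}-1}{\sqrt{\kappa_{\fr}}+1}\norm{\my^{k-1}-\my^*}.
\]
Each distance is then controlled by the strong convexity of $\fr$ on $\Range{\sqrt{\mC}}$: since $\my^*$ is the global minimizer of the smooth convex $\fr$, we have $\nabla\fr(\my^*) = \0$, whence $\norm{\my^k-\my^*} \leq \sqrt{2\Gamma^k/\mu_{\fr}}$ and likewise $\norm{\my^{k-1}-\my^*} \leq \sqrt{2\Gamma^{k-1}/\mu_{\fr}}$. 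Substituting these, squaring, and collecting the constant $2\cdot\pa{\os(\A)\sqrt{\ove{C}}/(\mu_f\mu_H)}^2\cdot(2/\mu_{\fr}) = \frac{4\os^2(\A)\ove{C}}{\mu_f^2\mu_H^2\mu_{\fr}}$ together with the residual $2e_{\mx,k+1}$ from the first step reproduces the claimed inequality.

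The step requiring the most care is the legitimacy of applying strong convexity of $\fr$, which holds only on $\Range{\sqrt{\mC}}$. I must verify that $\my^0 = \mv^0 = \0 \in \Range{\sqrt{\mC}}$ and then, by induction through the updates $\my\+ = \mv^k + \frac{1}{L_{\fr}}\sqrt{\mC}\ml\+$ and $\mv\+ = \my\+ + \beta_k\pa{\my\+-\my^k}$, that all iterates $\my^k, \mv^k$ remain in the subspace $\Range{\sqrt{\mC}}$, so the segments joining them to $\my^*$ stay inside the region of strong convexity and the two conjugate-gradient representations attach to the exact subproblem at $\mz^k = \sqrt{\mC}\mv^k$, whose solution sets are singletons under the assumptions of \cref{F_rho_convex}.
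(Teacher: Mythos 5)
Your proposal is correct and follows essentially the same route as the paper's proof: the decomposition $\norm{\mx\+-\mx^*}^2 \leq 2\norm{\mx\+-\mx^*_{k+1}}^2 + 2\norm{\mx^*_{k+1}-\mx^*}^2$, the transfer to the dual via $\norm{\mx^*_{k+1}-\mx^*} \leq \frac{\os(\A)}{\mu_f}\norm{\ml^*_{k+1}-\ml^*}$, the representation $\ml^*_{k+1} = \nabla H^*(-\sqrt{\mC}\mv^k)$ with the $\frac{1}{\mu_H}$-Lipschitz bound, the expansion of the extrapolation step, and the strong-convexity bound $\norm{\my^k-\my^*}^2 \leq \frac{2}{\mu_{\fr}}\Gamma^k$ on $\Range{\sqrt{\mC}}$. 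The only cosmetic difference is that you obtain the primal-to-dual Lipschitz estimate from the $\frac{1}{\mu_f}$-smoothness of $(f+g)^*$, whereas the paper reads it off the optimality conditions via a cited Fenchel-duality lemma; the resulting constants agree.
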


  \begin{proof}
    See \cref{appendix_1136}.
  \end{proof}

  With \cref{1135,1136} in hand, we are now ready to prove \cref{outer_convergence_SC}.
  According to \cref{2333}, we have
  \eqe{ \label{2203}
    \mathcal{E}_k &= \sum_{i=1}^k\lt(1-\frac{1}{\sqrt{\kappa_{\fr}}}\rt)^{-i/2}\epsilon_i = \frac{\varepsilon_1}{\sqrt{\theta_{\ml}}}\sum_{i=1}^k\lt(\frac{\theta_{\ml}}{1-\frac{1}{\sqrt{\kappa_{\fr}}}}\rt)^{i/2} \\
    &< \frac{\varepsilon_1}{\sqrt{1-\frac{1}{\sqrt{\kappa_{\fr}}}}-\sqrt{\theta_{\ml}}}, \ \text{if} \ \theta_{\ml} < 1-\frac{1}{\sqrt{\kappa_{\fr}}}; \\
    \mathcal{E}_k &= \frac{\pa{\lt(\frac{\theta_{\ml}}{1-\frac{1}{\sqrt{\kappa_{\fr}}}}\rt)^{k/2}-1}\varepsilon_1}{\sqrt{\theta_{\ml}}-\sqrt{1-\frac{1}{\sqrt{\kappa_{\fr}}}}} \\
    &< \frac{\varepsilon_1}{\sqrt{\theta_{\ml}}-\sqrt{1-\frac{1}{\sqrt{\kappa_{\fr}}}}}\lt(\frac{\theta_{\ml}}{1-\frac{1}{\sqrt{\kappa_{\fr}}}}\rt)^{k/2}, \ \text{if} \ \theta_{\ml} > 1-\frac{1}{\sqrt{\kappa_{\fr}}}; \\
    \mathcal{E}_k &= k\frac{\varepsilon_1}{\sqrt{\theta_{\ml}}}, \ \text{if} \ \theta_{\ml} = 1-\frac{1}{\sqrt{\kappa_{\fr}}}.
  }
  Combining \cref{1136}, \cref{lambda_error_condition}, \cref{1135}, \cref{2203}, and the equivalence between \cref{analyze_form,iD2A} completes the proof.
\end{appendix_proof}

\section{Proof of \cref{1135}} \label{appendix_1135}
\begin{appendix_proof}[\cref{1135}]
  Note that \cref{analyze_form} can be regarded an inexact AGD that applies to \cref{final_pro}. Therefore, we can apply \cite[Proposition 4]{schmidt2011convergence}, which analyzes the convergence of the general inexact AGD under the strong convexity assumption, to study the convergence of \cref{analyze_form}. The challenge is that \cite[Proposition 4]{schmidt2011convergence} assumes that the objective function is globally strongly convex, while $\fr$ is only strongly convex on $\Range{\sqrt{\mC}}$. However, the initialization $\mv^0 = \my^0 = \0$ guarantees that $\mv^k, \my^k \in \Range{\sqrt{\mC}}$ for any $k \geq 0$. Thus, the strong convexity of $\fr$ holds at $\mv^k$, $\my^k$, $\my^*$, and their linear combinations, implying that \cite[Proposition 4]{schmidt2011convergence} still holds for \cref{analyze_form}. Therefore, we can complete the proof by applying \cite[Proposition 4]{schmidt2011convergence} to \cref{analyze_form}.
\end{appendix_proof}

\section{Proof of \cref{1136}} \label{appendix_1136}
\begin{appendix_proof}[\cref{1136}]
  Recall that $H^*$ is $\frac{1}{L_H}$-strongly convex and $\frac{1}{\mu_H}$-smooth, $\my^*$ is the unique solution of \cref{final_pro} in $\Range{\sqrt{\mC}}$, $(\mx^*, \ml^*)$ and $(\mx_{k+1}^*, \ml_{k+1}^*)$ are the exact solutions of \cref{saddle_pro} with $\mz = \sqrt{\mC}\my^*$ and $\mz = \sqrt{\mC}\mv^k$, respectively.
  According to \cref{23551} and \cite[Lemma 36.2]{rockafellar1970convex}, we have
  \eqa{
    &\min_{\mx \in \R^d}\max_{\ml \in \R^{np}}\cT\pa{\sqrt{\mC}\my, \mx, \ml} \\
    =& \min_{\mx \in \R^d}\max_{\ml \in \R^{np}} f(\mx) + g(\mx) + \ml\T\A\mx \\
    &- \pa{\mh^*(\ml) + \frac{\rho}{2}\ml\T\mC\ml + \ml\T\sqrt{\mC}\my} \\
    =& -\min_{\ml \in \R^{np}} H(\ml) + \my\T\sqrt{\mC}\ml \label{dual2_pro},
  }
  hence $\ml^*$ and $\ml_{k+1}^*$ satisfy
  \eqe{
    -\sqrt{\mC}\my^* &= \nabla H(\ml^*), \\
    -\sqrt{\mC}\mv^k &= \nabla H(\ml_{k+1}^*).
  }
  According to \cref{conjugate_subdiff}, we have
  \eqe{
    \ml^* &= \nabla H^*(-\sqrt{\mC}\my^*), \\
    \ml_{k+1}^* &= \nabla H^*(-\sqrt{\mC}\mv^k).
  }
  Using the smoothness of $H^*$ gives
  \eqe{ \label{1528}
    \norm{\ml_{k+1}^*-\ml^*} =& \norm{\nabla H^*\lt(-\sqrt{\mC}\my^*\rt) - \nabla H^*\lt(-\sqrt{\mC}\mv^k\rt)} \\
    \leq& \frac{1}{\mu_H}\norm{\sqrt{\mC}(\mv^k-\my^*)} \\
    \leq& \frac{\ove{\sqrt{\mC}}}{\mu_H}\norm{\mv^k-\my^*} \\
    \overset{\cref{analyze_form}}{\leq}& \frac{\ove{\sqrt{\mC}}}{\mu_H}\bigg(\lt(1+\frac{\sqrt{\kappa_{\fr}}-1}{\sqrt{\kappa_{\fr}}+1}\rt)\norm{\my^k-\my^*} \\
    &+ \frac{\sqrt{\kappa_{\fr}}-1}{\sqrt{\kappa_{\fr}}+1}\norm{\my^{k-1}-\my^*}\bigg).
  }

  According to the definitions of $\mx^*$ and $\mx_{k+1}^*$, we have
  \eqe{ \label{2249}
    -\A\T\ml^* &\in \nabla f(\mx^*) + \partial g(\mx^*), \\
    -\A\T\ml_{k+1}^* &\in \nabla f(\mx_{k+1}^*) + \partial g(\mx_{k+1}^*).
  }
  Recall that $f$ is $\mu_f$-strongly convex, so is $f+g$. By \cite[Lemma 3]{zhou2018fenchel}, we can obtain
  \eqe{ \label{1527}
    \norm{\mx_{k+1}^* - \mx^*}  &\overset{\cref{2249}}{\leq} \frac{1}{\mu_f}\norm{\A\T\lt(\ml_{k+1}^*-\ml^*\rt)} \\
    &\leq \frac{\os(\A)}{\mu_f}\norm{\ml_{k+1}^*-\ml^*}.
  }
  Recall that $\fr$ is $\mu_{\fr}$-strongly convex on $\Range{\sqrt{\mC}}$, and $\my^k, \my^* \in \Range{\sqrt{\mC}}$, we have
  \eqe{ \label{2348}
    \norm{\my^k-\my^*}^2 \leq \frac{2}{\mu_{\fr}}\pa{\fr(\my^k) - \fr(\my^*)}.
  }
  Also note that
  \eqe{ \label{2300}
    \norm{\mx\+-\mx^*}^2 \leq 2\norm{\mx\+-\mx_{k+1}^*}^2 + 2\norm{\mx_{k+1}^*-\mx^*}^2,
  }
  then the proof can be completed by combining \cref{2300}, \cref{x_error_condition}, \cref{1527}, \cref{1528} and \cref{2348}.
\end{appendix_proof}

\section{Proof of \cref{kappa_F}} \label{appendix_kappa_F}
\begin{appendix_proof}[\cref{kappa_F}]
  The first half of \cref{kappa_F} can be easily derived from \cref{F_rho_convex}, hence we only focus on the second half part.

  Inspired by \cite{arjevani2020ideal}, we can also obtain $\mu_{\fr}$ and $L_{\fr}$ by examining the Hessian of $\fr$, provided that it is twice differentiable.
  Consider the standard Lagrangian dual function of \cref{dual_pro_consensus}:
  \eqe{
    F'(\my) = \inf_{\ml} \mph(\ml) + \mh^*(\ml) + \my\T\sqrt{\mC}\ml,
  }
  the Moreau envelope of $F = -F'$ is given by
  \eqe{
    M_{\rho F}(\my) = \inf_{\mw} F(\mw) + \frac{1}{2\rho}\norm{\mw-\my}^2.
  }
  Similar to \cite[Proposition 7]{arjevani2020ideal}, we can obtain the explicit form of the Hessian of $M_{\rho F}$.
  \begin{lemma}* \label{Hessian}
    If $\pa{\mph+\mh^*}^*$ is twice differentiable, then $M_{\rho F}$ is also twice differentiable, and
    \eqe{
      &\nabla^2 M_{\rho F}(\my)=\frac{1}{\rho} \I-\frac{1}{\rho^2}\bigg[\frac{1}{\rho} \I \\
      &+\sqrt{\mC} \nabla^2 \pa{\mph+\mh^*}^*\pa{-\sqrt{\mC} \prox_{\rho F}(\my)} \sqrt{\mC}\bigg]^{-1}, \ \forall \my \in \R^{np}.
      \nonumber
    }
  \end{lemma}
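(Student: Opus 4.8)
The plan is to follow the route of \cite[Proposition 7]{arjevani2020ideal}: put $F$ in closed form, use the gradient identity for the Moreau envelope, and then obtain the Hessian by implicitly differentiating the proximal map.

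First I would express $F$ in closed form. Writing $\Psi = \mph + \mh^*$, the definition of $F'$ gives $F'(\my) = \inf_{\ml}\Psi(\ml) + \my\T\sqrt{\mC}\ml = -\Psi^*(-\sqrt{\mC}\my)$, so that $F(\my) = -F'(\my) = \Psi^*(-\sqrt{\mC}\my) = (\mph+\mh^*)^*(-\sqrt{\mC}\my)$. Since $\mph+\mh^*$ is closed proper convex (established in the proof of \cref{1552}), its conjugate $\Psi^*$ is closed proper convex, and hence so is $F$, being the composition of a closed proper convex function with the linear map $-\sqrt{\mC}$.

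Next I would invoke the standard facts for the Moreau envelope of a closed proper convex function. The inner objective $F(\mw) + \frac{1}{2\rho}\norm{\mw-\my}^2$ is $\frac{1}{\rho}$-strongly convex in $\mw$, so $\prox_{\rho F}(\my)$ is single-valued; moreover $M_{\rho F}$ is differentiable with $\nabla M_{\rho F}(\my) = \frac{1}{\rho}\pa{\my - \prox_{\rho F}(\my)}$. Computing the Hessian therefore reduces to finding the Jacobian $J(\my) = \nabla \prox_{\rho F}(\my)$, after which $\nabla^2 M_{\rho F}(\my) = \frac{1}{\rho}\pa{\I - J(\my)}$.

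The core step is implicit differentiation of the proximal map. Let $\mw^* = \prox_{\rho F}(\my)$; its optimality condition reads $\nabla F(\mw^*) + \frac{1}{\rho}\pa{\mw^*-\my} = \0$, where the chain rule gives $\nabla F(\mw) = -\sqrt{\mC}\nabla\Psi^*(-\sqrt{\mC}\mw)$. Under the hypothesis that $\Psi^* = (\mph+\mh^*)^*$ is twice differentiable, differentiating this identity in $\my$ yields $\pa{\sqrt{\mC}\nabla^2\Psi^*(-\sqrt{\mC}\mw^*)\sqrt{\mC} + \frac{1}{\rho}\I}J = \frac{1}{\rho}\I$. Because $\Psi^*$ is convex, $\nabla^2\Psi^* \geq 0$, so $\sqrt{\mC}\nabla^2\Psi^*\sqrt{\mC} \geq 0$ and the bracketed matrix is positive definite, hence invertible; solving gives $J = \frac{1}{\rho}\pa{\frac{1}{\rho}\I + \sqrt{\mC}\nabla^2\Psi^*(-\sqrt{\mC}\mw^*)\sqrt{\mC}}^{-1}$. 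Substituting into $\nabla^2 M_{\rho F} = \frac{1}{\rho}\pa{\I - J}$ produces the claimed expression, and this same positive definiteness is what justifies the implicit function theorem step and guarantees that $M_{\rho F}$ is twice differentiable. I expect this invertibility and regularity verification to be the only genuine obstacle; the remaining manipulations are routine chain-rule bookkeeping.
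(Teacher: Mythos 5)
Your proposal is correct and follows essentially the same route the paper intends: the lemma is starred (proof omitted) and explicitly modeled on \cite[Proposition 7]{arjevani2020ideal}, i.e., write $F(\my) = (\mph+\mh^*)^*(-\sqrt{\mC}\my)$, use $\nabla M_{\rho F}(\my) = \frac{1}{\rho}(\my - \prox_{\rho F}(\my))$, and implicitly differentiate the prox optimality condition, with invertibility of $\frac{1}{\rho}\I + \sqrt{\mC}\nabla^2(\mph+\mh^*)^*\sqrt{\mC}$ supplied by positive semidefiniteness of the conjugate's Hessian. The only point you might flag more carefully is that the implicit-differentiation step is cleanest when $\nabla^2(\mph+\mh^*)^*$ is continuous, but this matches the level of rigor of the omitted proof.
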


  Recall from the proof of \cref{1552} that $\mph+\mh^*$ is closed proper convex, we then have $\fr = M_{\rho F}$ by \cite[Lemma 5]{li2025smooth}.
  With \cref{Hessian} in hand, we can obtain $\mu_{\fr}$ and $L_{\fr}$ by analyzing the lower and upper bounds of $\nabla^2 \fr$.
  \begin{lemma} \label{convex_Moreau}
    Assume that \cref{convex,G} hold, $f_i$ and $h^*$ are twice differentiable, $g_i = 0$, and $\mph+\mh^*$ is $\mu'$-strongly convex and $L'$-smooth, then $\fr$ is $\mu_{\fr}$-strongly convex on $\Range{\sqrt{\mC}}$ and $L_{\fr}$-smooth, where $\mu_{\fr} = \frac{\ue{C}}{L' + \rho\ue{C}}$ and $\ L_{\fr} = \frac{\ove{C}}{\mu' + \rho\ove{C}}$.
  \end{lemma}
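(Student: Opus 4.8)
The plan is to feed the explicit Hessian from \cref{Hessian} into the spectral structure of $\mC$. Since $g_i = 0$ and $f_i, h^*$ are twice differentiable, $\mph + \mh^*$ and its conjugate are twice differentiable, so \cref{Hessian} applies; combined with $\fr = M_{\rho F}$ (established just above), it gives a closed form for $\nabla^2\fr(\my)$. First I would simplify this form. Writing $B = \sqrt{\mC}\,\nabla^2(\mph+\mh^*)^*\!\pa{-\sqrt{\mC}\prox_{\rho F}(\my)}\sqrt{\mC}$, the formula reads $\nabla^2\fr(\my) = \frac{1}{\rho}\I - \frac{1}{\rho}\pa{\I + \rho B}^{-1}$, and since $B$ commutes with $\pa{\I+\rho B}^{-1}$ a one-line manipulation collapses this to $\nabla^2\fr(\my) = B\pa{\I + \rho B}^{-1} = \psi(B)$, where $\psi(t) = \frac{t}{1+\rho t}$ is scalar and strictly increasing on $[0,\infty)$ with $\psi(0)=0$.

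Next I would bound $B$ in the Löwner order. Because $\mph+\mh^*$ is $\mu'$-strongly convex and $L'$-smooth, its conjugate is $\frac{1}{L'}$-strongly convex and $\frac{1}{\mu'}$-smooth by \cref{conjugate_smooth,conjugate_strong_convex}, hence $\frac{1}{L'}\I \preceq \nabla^2(\mph+\mh^*)^* \preceq \frac{1}{\mu'}\I$ at every point. Conjugating by $\sqrt{\mC}$ preserves the ordering, so $\frac{1}{L'}\mC \preceq B \preceq \frac{1}{\mu'}\mC$. As $B$ is symmetric, I would diagonalize it; the eigenvalues of $\nabla^2\fr(\my)=\psi(B)$ are exactly $\psi$ applied to those of $B$. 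By Weyl's monotonicity inequalities the sandwich on $B$ transfers to its eigenvalues: the largest is at most $\frac{\ove{C}}{\mu'}$, and—because $\nabla^2(\mph+\mh^*)^*\succ 0$ forces $\Null{B}=\Null{\sqrt{\mC}}=\Null{\mC}$, so $B$ and $\mC$ have equal rank and matching zero-eigenvalue multiplicities—its smallest nonzero eigenvalue is at least $\frac{\ue{C}}{L'}$.

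It then remains to push these scalar bounds through the increasing map $\psi$. For smoothness, every eigenvalue of $\psi(B)$ is at most $\psi\!\pa{\frac{\ove{C}}{\mu'}} = \frac{\ove{C}}{\mu'+\rho\ove{C}} = L_{\fr}$, so $\nabla^2\fr(\my)\preceq L_{\fr}\I$ globally. For strong convexity on $\Range{\sqrt{\mC}}=\Range{\mC}$, I would note this subspace equals $\Range{B}$ (symmetric matrices have $\Range{B}=\Null{B}^\perp$), hence is invariant under $\psi(B)$ and is spanned precisely by the nonzero-eigenvalue directions; on it the eigenvalues of $\psi(B)$ are at least $\psi\!\pa{\frac{\ue{C}}{L'}} = \frac{\ue{C}}{L'+\rho\ue{C}} = \mu_{\fr}$. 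Integrating the quadratic form $\dotprod{\nabla^2\fr(\mz)v, v}\geq \mu_{\fr}\norm{v}^2$ along segments lying in $\Range{\sqrt{\mC}}$ (a subspace, so segments and displacements stay inside it) yields $\mu_{\fr}$-strong convexity there.

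The main obstacle I anticipate is the singularity of $\mC$: since $B$ inherits a nontrivial kernel, $\fr$ cannot be globally strongly convex, and the argument must cleanly isolate $\Null{\mC}$ from its complement. The two facts that make this work are $\nabla^2(\mph+\mh^*)^*\succ 0$, which guarantees $\Null{B}=\Null{\mC}$ so that the rank match lets Weyl's inequalities align the nonzero spectra, and $\psi(0)=0$, which ensures the zero modes of $\psi(B)$ sit exactly on $\Null{\mC}$ while the bound $\psi(\ue{C}/L')$ governs the entire complement. A secondary point to verify is that $\Range{\sqrt{\mC}}$ is invariant under $\nabla^2\fr(\my)$ for every $\my$, which is immediate since $\psi(B)$ is a function of $B$ and $\Range{B}=\Range{\sqrt{\mC}}$.
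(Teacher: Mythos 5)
Your proposal is correct and follows essentially the same route as the paper: both rest on the explicit Hessian formula of \cref{Hessian} together with the sandwich $\frac{1}{L'}\I \leq \nabla^2(\mph+\mh^*)^* \leq \frac{1}{\mu'}\I$ obtained from \cref{conjugate_smooth,conjugate_strong_convex}. The only difference is one of completeness: the paper stops at ``combining the above inequality with \cref{Hessian} completes the proof,'' whereas you carry out that combination explicitly via the spectral map $\psi(t)=t/(1+\rho t)$ and the null-space matching $\Null{B}=\Null{\mC}$, which is exactly the argument the paper leaves implicit.
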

  \begin{proof}
    According to \cite[Corollary X.4.2.10]{hiriart1996convex}, if a function $\zeta: \R^d \rightarrow \R$ is strongly convex and twice differentiable, then $\zeta^*$ is also twice differentiable. Since $f_i$ is smooth, strongly convex and twice differentiable, and $g_i = 0$, $(f_i+g_i)^*$ is strongly convex, smooth and twice differentiable, so is $(f+g)^*$. Recall that $\mph(\ml) = (f+g)^*(-\A\T\ml)$, hence $\phi$ is twice differentiable. Also note that $h^*$ is twice differentiable and $\mph+\mh^*$ is $\mu'$-strongly convex, so $\pa{\mph+\mh^*}^*$ is also twice differentiable, which implies that \cref{Hessian} is applicable.

    Since $\mph+\mh^*$ is $\mu'$-strongly convex and $L'$-smooth, $\pa{\mph+\mh^*}^*$ is $\frac{1}{L'}$-strongly convex and $\frac{1}{\mu'}$-smooth, then we have
    \eqe{
      \frac{1}{L'}\I \leq \nabla^2 \pa{\mph+\mh^*}^*(x) \leq \frac{1}{\mu'}\I, \ \forall x \in \R^{np}.
    }
    Combining the above inequality with \cref{Hessian} completes the proof.
  \end{proof}

  According to the proofs of \cref{F_rho_smooth,F_rho_convex}, we have
  \begin{enumerate}
    \item $\mph+\mh^*$ is $\pa{\max_{i \in \sI}\frac{\os^2(A_i)}{\mu_i}+\frac{L_{h^*}}{n}}$-smooth;
    \item $\mh^*$ is $\frac{\mu_{h^*}}{n}$-strongly convex for the first case;
    \item $\mph$ is $\min_{i \in \sI}\frac{\mins^2(A_i)}{L_i}$-strongly convex for the second case.
  \end{enumerate}
  Therefore, we can complete the proof by applying \cref{convex_Moreau}.
\end{appendix_proof}

\section{Proof of \cref{1518}} \label{appendix_1518}
\begin{appendix_proof}[\cref{1518}]
  Since the solution to \cref{general_saddle_pro} (i.e., the saddle-point) exists, by \cite[Lemma 36.2]{rockafellar1970convex}, we have
  \eqa{
    &\min_{x \in \R^{d_x}}\max_{y \in \R^{d_y}} f_1(x) + f_2(x) + y\T Bx - g_1(y) -g_2(y) \\
    =&\max_{y \in \R^{d_y}}\min_{x \in \R^{d_x}} f_1(x) + f_2(x) + y\T Bx - g_1(y) -g_2(y) \\
    =&-\min_{y \in \R^{d_y}} \pa{\varphi(y) + g_2(y)}. \label{2158}
  }
  Define
  \eqe{
    x^*(y) = \arg\min_{x \in \R^{d_x}} f_1(x) + f_2(x) + y\T Bx.
  }
  Since $f_1+f_2$ is closed proper and $\mu_x$-strongly convex, it follows from \cref{conjugate_smooth} that $(f_1+f_2)^*$ is $\frac{1}{\mu_x}$-smooth and $x^*(y) = \nabla (f_1+f_2)^*(-B\T y)$.
  The strong convexity of $f_1+f_2$ further ensures, according to \cref{conjugate_smooth} and \cite[Theorem 27.1]{rockafellar1970convex}, the existence and uniqueness of of $x^*(y)$ for any $y \in \R^{d_y}$. By the optimality condition of $x^*(y)$, we have
  \eqe{
    \0 \in \nabla f_1\pa{x^*(y)}+B\T y + \partial f_2\pa{x^*(y)} = \partial_x \cL(x^*(y), y).
  }
  Using the $\mu_x$-strong convexity of $\cL$ w.r.t. $x$, we obtain
  \eqe{ \label{1015}
  \norm{x-x^*(y)} &\leq \min_{S_{f_2}(x) \in \partial f_2(x)} \frac{1}{\mu_x}\norm{\nabla f_1(x)+S_{f_2}(x)+B\T y} \\
  &= \frac{1}{\mu_x}\dist{\0, \partial_x \cL(x, y)}.
  }
  Since $f_1$ and $\varphi$ are both strongly convex, \cref{general_saddle_pro} admits a unique solution $(x^*, y^*)$, where $y^*$ is the unique solution to \cref{2158}. Thus, the optimality condition for $y^*$ yields
  \eqe{ \label{0040}
    \0 \in \nabla g_1(y^*) + S_{g_2}(y^*) - Bx^*(y^*).
  }
  The $\mu_{\varphi}$-strong convexity of $\varphi$ implies that $\varphi + g_2$ is also $\mu_{\varphi}$-strongly convex. Combining this with \cref{0040}, we obtain
  \eqe{ \label{1509}
    &\norm{y-y^*} \\
    \leq& \min_{S_{g_2}(y) \in \partial g_2(y)} \frac{1}{\mu_{\varphi}}\norm{\nabla g_1(y) + S_{g_2}(y) - Bx^*(y)} \\
    \leq& \frac{1}{\mu_{\varphi}}\bigg(\min_{S_{g_2}(y) \in \partial g_2(y)}\pa{\norm{\nabla g_1(y) + S_{g_2}(y) - Bx}} \\
    &+\os(B)\norm{x-x^*(y)}\bigg) \\
    \overset{\cref{1015}}\leq& \frac{1}{\mu_{\varphi}}\pa{\dist{\0, \partial_y \cL(x, y)} + \frac{\os(B)}{\mu_x}\dist{\0, \partial_x \cL(x, y)}}.
  }
  Furthermore, the $\frac{1}{\mu_x}$-smoothness of $(f_1+f_2)^*$ gives
  \eqe{
    &\norm{x^*(y) - x^*(y^*)} \\
    &= \norm{\nabla (f_1+f_2)^*(-B\T y) - \nabla (f_1+f_2)^*(-B\T y^*)} \\
    &\leq \frac{\os(B)}{\mu_x}\norm{y-y^*}.
  }
  Recognizing $x^* = x^*(y^*)$, we have
  \eqe{ \label{1510}
    \norm{x-x^*} &\leq \norm{x-x^*(y)}+\norm{x^*(y)-x^*(y^*)} \\
    &\leq \frac{1}{\mu_x}\dist{\0, \partial_x \cL(x, y)} + \frac{\os(B)}{\mu_x}\norm{y-y^*}.
  }
  Combining \cref{1509,1510} completes the proof.
\end{appendix_proof}

\section{Proof of \cref{2046}} \label{appendix_2046}
\begin{appendix_proof}[\cref{2046}]
  Let $\zeta (\ml) = \frac{\rho}{2}\ml\T\mC\ml + \ml\T\mz^k$. We can regard \cref{inner_pro} as a specific instance of \cref{general_saddle_pro} with the following setting:
  \eqe{
    f_1 \triangleq f, \ f_2 \triangleq g, \ B \triangleq \A, \ g_1 \triangleq \zeta, \ g_2 \triangleq \mh^*.
  }
  Under \cref{G,convex,1643}, we can easily verify that \cref{2255} holds, $f_1$ is $\mu_f$-strongly convex, and $\varphi$ is $\mu_H$-strongly convex, where $\mu_H$ is defined in \cref{F_rho_convex}. The proof can then be completed by applying \cref{1518}.
\end{appendix_proof}

\section{Proof of \cref{iD2A_complexity_convex}} \label{appendix_iD2A_complexity_convex}
\begin{appendix_proof}[\cref{iD2A_complexity_convex}]
  According to \cref{1528}, \cref{1135}, and \cref{2203}, we have
  \eqe{ \label{2318}
  &\norm{\ml_{k+1}^*-\ml^*}^2 \\
  <& C_1\bigg(\sqrt{2(\fr(\my^0) - \fr(\my^*))} \\
  &+ \frac{\sqrt{\frac{2}{\mu_{\fr}}}\sqrt{e_{\ml,1}\ove{C}}}{\sqrt{\theta_{\ml}}-\sqrt{1-\frac{1}{\sqrt{\kappa_{\fr}}}}}\lt(\frac{\theta_{\ml}}{1-\frac{1}{\sqrt{\kappa_{\fr}}}}\rt)^{\frac{k}{2}}\bigg)^2\lt(1-\frac{1}{\sqrt{\kappa_{\fr}}}\rt)^k \\
  \overset{\cref{1547}}\leq& C_2e_{\ml,1}\theta_{\ml}^k,
  }
  where $C_1 = \frac{2\ove{C}}{\mu^2_H\mu_{\fr}}\lt(\frac{\sqrt{\kappa_{\fr}}\pa{2+\sqrt{1-\frac{1}{\sqrt{\kappa_{\fr}}}}}}{\sqrt{\kappa_{\fr}}+1}\rt)^2$ and $C_2 = \frac{8C_1\ove{C}}{\mu_{\fr}\pa{\sqrt{\theta_{\ml}}-\sqrt{1-\frac{1}{\sqrt{\kappa_{\fr}}}}}^2}$.
  Combining \cref{lambda_error_condition,2318} yields
  \eqe{ \label{2251}
  \norm{\ml\+-\ml^*}^2 &< 2C_2e_{\ml,1}\theta_{\ml}^k + 2e_{\ml,k+1} \\
  &= C_3e_{\ml,1}\theta_{\ml}^k,
  }
  where $C_3 = 2C_2+2$.
  According to \cref{1527}, \cref{2318}, and \cref{1547}, we have
  \eqe{
  \norm{\mx_{k+1}^*-\mx^*}^2 < C_2e_{\mx,1}\theta_{\ml}^k,
  }
  it follows that
  \eqe{ \label{1520}
  \norm{\mx\+-\mx^*}^2 < 2C_2e_{\mx,1}\theta_{\ml}^k + 2e_{\mx,k+1} = C_3e_{\mx,1}\theta_{\ml}^k.
  }
  Then, we can obtain \cref{1545} by \cref{1520} and \cref{1547}, where
  \eqe{
    \mathcal{C}=\frac{\os^2(\A)}{\mu_f^2}\bigg(\frac{32\ove{C}\kappa_{\fr}\pa{2+\sqrt{1-\frac{1}{\sqrt{\kappa_{\fr}}}}}^2}{\mu^2_H\mu_{\fr}\pa{\sqrt{\kappa_{\fr}}+1}^2} \\
    +\frac{2\mu_{\fr}}{\ove{C}}\pa{\sqrt{\theta_{\ml}}-\sqrt{1-\frac{1}{\sqrt{\kappa_{\fr}}}}}^2\bigg).
    \nonumber
  }

  To prove \cref{1546}, it suffices to show that $\frac{\mathcal{C}_1\norm{\mx^k-\mx_{k+1}^*}^2 + \mathcal{C}_2\norm{\ml^k-\ml_{k+1}^*}^2}{\min\pa{e_{\mx,k+1}, e_{\ml,k+1}}}$ is bounded by a constant independent of $k$. By \cref{2318,2251}, we have
  \eqe{
    \frac{\norm{\ml^k-\ml_{k+1}^*}^2}{e_{\ml,k+1}} &\leq \frac{2}{{e_{\ml,k+1}}}\pa{\norm{\ml^k-\ml^*}^2 + \norm{\ml_{k+1}^*-\ml^*}^2} \\
    &< 2\pa{\theta_{\ml} C_2 + C_3}\frac{e_{\ml,k}}{{e_{\ml,k+1}}} \\
    &\leq \frac{2\pa{\theta_{\ml} C_2 + C_3}}{\theta_{\ml}}.
  }
  Similarly, we can obtain
  \eqe{
    \frac{\norm{\mx^k-\mx_{k+1}^*}^2}{e_{\mx,k+1}} < \frac{2\pa{\theta_{\ml} C_2 + C_3}}{\theta_{\ml}}.
  }
  It follows that
  \eqe{
    &\frac{\mathcal{C}_1\norm{\mx^k-\mx_{k+1}^*}^2 + \mathcal{C}_2\norm{\ml^k-\ml_{k+1}^*}^2}{\min\pa{e_{\mx,k+1}, e_{\ml,k+1}}} \\
    &\leq \mathcal{C}_1\max\pa{\frac{\norm{\mx^k-\mx_{k+1}^*}^2}{e_{\mx,k+1}}, \frac{\norm{\mx^k-\mx_{k+1}^*}^2}{e_{\ml,k+1}}} \\
    &+ \mathcal{C}_2\max\pa{\frac{\norm{\ml^k-\ml_{k+1}^*}^2}{e_{\ml,k+1}}, \frac{\norm{\ml^k-\ml_{k+1}^*}^2}{e_{\mx,k+1}}} \\
    &< \frac{2\pa{\theta_{\ml} C_2 + C_3}}{\theta_{\ml}}\bigg(\mathcal{C}_1\max\pa{1,\frac{\mu_f^2}{\os^2(\A)}} \\
    &+ \mathcal{C}_2\max\pa{1, \frac{\os^2(\A)}{\mu_f^2}}\bigg),
  }
  which completes the proof.
\end{appendix_proof}

\section{Applications of \cref{main_pro}} \label{appendix_applications}
\subsubsection{Decentralized Resource Allocation}
Consider a system comprising of $n_p$ energy-production nodes and $n_r$ energy-reserving nodes. A specific class of decentralized resource allocation problem (DRAP), known as the decentralized energy resource management problem, aims to optimize energy costs \cite{doostmohammadian2023distributed}. This can be formulated as
\eqe{ \label{16570}
  \min_{\substack{x_i \in \R, i = 1, \cdots, n_p; \\ y_j \in \R, j = 1, \cdots, n_r}} \  &\sum_{i=1}^{n_p} f^p_i(x_i) + \sum_{j=1}^{n_r} f^r_j(y_j) \\
  \text{s.t.} \                            &\sum_{i=1}^{n_p}x_i = b + \sum_{j=1}^{n_r}y_j,  \\
  &x_i \in \sX_i, i = 1, \cdots, n_p, \\
  &y_j \in \mathcal{Y}_j, j = 1, \cdots, n_r,
}
where $x_i$ and $f^p_i: \R \rightarrow \R$ denote the power state and the local cost function of the energy-production node $i$, respectively; $y_j$ and $f^r_j: \R \rightarrow \R$ are those of energy-consumption node $j$; and $\sX_i$ and $\mathcal{Y}_j$ are interval constraints.  The equality constraint ensures that the total generated power equals the sum of the power demand $b \in \R$ and the reserved power. The set constraints specify the ranges within which the power states must fall.
Introducing $n = n_p+n_r$ and
\eqe{
&f_i \triangleq f^p_i, \ g_i \triangleq \iota_{\sX_i}, \ A_i \triangleq [1], i = 1, \cdots, n_p; f_i \triangleq f^r_{i-n_p}, \\
&g_i \triangleq \iota_{\mathcal{Y}_{i-n_p}}, \ A_i \triangleq [-1], i = n_p+1, \cdots, n; \ h \triangleq \iota_{\set{b}},
\nonumber
}
we can reformulate \cref{16570} as \cref{main_pro}.

\subsubsection{Decentralized Model Predictive Control}
Consider a class of decentralized linear model predictive control problems given in \cite{yfantis2024distributed}:
\eqe{ \label{0319}
&\min_{\substack{x_i^0, \cdots, x_i^K \in \R^{d_{x_i}}; \\ u_i^0, \cdots, u_i^{K-1} \in \R^{d_{u_i}}, \\ i \in \sI}} \ \sum_{i=1}^n\pa{J_i^f(x_i^K) + \sum_{k=0}^{K-1}J_i(x_i^k, u_i^k)} \\
\text{s.t.} \ &x_i\+ = A_ix_i^k + B_iu_i^k, \ k = 0, \cdots, K-1, i \in \sI, \\
&x_i^0 = \tx_i(t_0), \ i \in \sI, \\
&x_i^k \in \sX_i \subseteq \R^{d_{x_i}},  \ k = 1, \cdots, K, i \in \sI, \\
&u_i^k \in \mathcal{U}_i \subseteq \R^{d_{u_i}},  \ k = 0, \cdots, K-1, i \in \sI, \\
&\sum_{i=1}^{n}R_iu_i^k \leq r^k, \ k = 0, \cdots, K-1,
}
where $x_i$ and $u_i$ are the states and control inputs of agent $i$ respectively, $J_i^f(x_i^K)$ and $J_i(x_i^k, u_i^k)$ denote the terminal and stage costs of agent $i$ respectively, $A_i \in \R^{d_{x_i} \times d_{x_i}}$ and $B_i \in \R^{d_{x_i} \times d_{u_i}}$ are the state and input matrices of agent $i$ respectively, $\tx_i(t_0)$ denotes the measured states of agent $i$ at the current time $t_0$, $R_i \in \R^{d_r \times d_{u_i}}$ represents the map from the control inputs $u_i$ of agent $i$ to the corresponding resource consumption or production, and $r^k \in \R^{d_r}$ is the maximum availability of resources at time $k$.

Problem \cref{0319} is a special case of \cref{main_pro}. To demonstrate this, we define
\eqe{
  y_i =& \coll{x_i^0, \cdots, x_i^K, u_i^0, \cdots, u_i^{K-1}} \in \R^{d_i}, \\
  d_i =& K(d_{x_i}+d_{u_i})+d_{x_i}, \mathcal{S}_i = \set{\tx_i(t_0)} \times \pa{\sX_i}^{K-1} \times \pa{\mathcal{U}_i}^{K-1}, \\
  Q_i =& \mat{ \0_{(Kd_{x_i}) \times d_{x_i}}, \I_{Kd_{x_i}}, \0_{(Kd_{x_i}) \times (Kd_{u_i})}} \\
  &- \mat{\I_K \otimes A_i, \0_{(Kd_{x_i}) \times d_{x_i}}, \I_K \otimes B_i},  \\
  [P_1, &\cdots, P_n] = \diag{Q_1, \cdots, Q_n}, \ P_i \in \R^{(K\sum_{i=1}^{n}d_{x_i}) \times d_{y_i}}, \\
  G_i =& \mat{\I_K \otimes R_i}\mat{\0_{(Kd_{u_i}) \times ((K+1)d_{x_i})}, \I_{Kd_{u_i}}}, \\
  \mathcal{Z} =& \set{\0_{K\sum_{i=1}^{n}d_{x_i}}} \times \prod_{k=0}^{K-1}\set{x \in \R^{d_r} | x \leq r^k}, i \in \sI.
  \nonumber
}
Using the above notations, we can reformulate \cref{0319} as \cref{main_pro} with the following definitions:
\eqe{
  &f_i(y_i) = J_i^f(x_i^K) + \sum_{k=0}^{K-1}J_i(x_i^k, u_i^k), \ g_i = \iota_{\mathcal{S}_i}, \\
  &A_i = \begin{bmatrix}
    P_i \\
    G_i
  \end{bmatrix}, i \in \sI; \ h = \iota_{\mathcal{Z}}.
}

\subsubsection{Decentralized Learning} \label{appendix_decen_learning}
Consider a dataset with a raw feature matrix $X' \in \R^{p \times (d-1)}$ and a label vector $y \in \R^p$, let $X = [X', \1_p] \in \R^{p \times d}$. When employing generalized linear models (GLMs) \cite{hardin2018generalized}, such as linear regression, logistic regression, Poisson regression, or constrained GLMs \cite{mcdonald1990fitting} like nonnegative least squares, to fit the dataset, the associated regularized empirical risk minimization (ERM) problem can be formulated as
\eqe{ \label{1628}
  \min_{\theta \in \R^d} f(\theta) + g(\theta) + \ell(X\theta),
}
where $\theta \in \R^d$ is the model parameter, $\ell: \R^p \rightarrow \exs$ is a convex but possibly nonsmooth loss function, $f: \R^d \rightarrow \R$ is a convex and smooth function, and $g: \R^d \rightarrow \exs$ is a convex but possibly nonsmooth function. For example, $f$ could represent an $\ell_2$ regularizer, while $g$ might be an $\ell_1$ regularizer, an indicator function $\iota_{\R^d_+}$, or a combination of these. Additionally, $\ell$ varies with different GLMs:
\begin{itemize}
  \item Linear regression: $\ell(z) = \frac{1}{2p}\norm{z-y}^2$;
  \item Huber regression: $\ell(z) = \frac{1}{p}L_{\delta}(z-y)$, where $L_{\delta}$ is the Huber loss function defined as
        $$L_{\delta}(x) = \lt\{
          \begin{aligned}
            \frac{1}{2}\norm{x}^2,      \             & \norm{x} \leq \delta, \\
            \delta\pa{\norm{x}-\frac{1}{2}\delta}, \  & \text{otherwise}.
          \end{aligned}\right.$$
  \item Logistic regression: $\ell(z) = \frac{1}{p}\sum_{j=1}^p\log\pa{1+e^{-y_jz_j}}$ ($y_j \in \set{1,-1}$);
  \item Poisson regression: $\ell(z) = \frac{1}{p}\sum_{j=1}^p\pa{e^{z_j} - y_jz_j + \log(y_j!)}$ ($y_j \geq 0$ is an integer).
\end{itemize}

In the context of vertical federated learning \cite{liu2024vertical}, each agent (party) holds only partial features of the global dataset. Specifically, the feature matrix is vertically partitioned as $X = [X_1, \cdots, X_n]$, where $X_i \in \R^{p \times d_i}$ (with $\sum_{i=1}^nd_i = d$) represents the local feature matrix of agent $i \in \sI$. As the private data of agent $i$, $X_i$ is not allowed to be transited to other agents, which is the key characteristic of federated learning.
It is important to note that $f$ and $g$ in the ERM problem \cref{1628} can typically be decomposed into a sum of local functions. For instance, when $\theta = [\theta_1, \cdots, \theta_n]$ with $\theta_i \in \R^{d_i}$, we have
\begin{itemize}
  \item $\ell_2$ regularizer: $\frac{1}{2}\norm{\theta}^2 = \sum_{i=1}^n\frac{1}{2}\norm{\theta_i}^2$;
  \item $\ell_1$ regularizer: $\norm{\theta}_1 = \sum_{i=1}^n\norm{\theta_i}_1$;
  \item Nonnegative indicator function: $\iota_{\R^d_+}(\theta) = \sum_{i=1}^n\iota_{\R^{d_i}_+}(\theta_i)$.
\end{itemize}
Considering the above setting and the decomposability of $f$ and $g$, we can reformulate the ERM problem \cref{1628} into the following decentralized form:
\eqe{ \label{1630}
  \min_{\theta_i \in \R^{d_i}, i \in \sI; z \in \R^p} \ &\sum_{i=1}^n\pa{f_i(\theta_i) + g_i(\theta_i)} + \ell(z) \\
  \text{s.t} \ &\sum_{i=1}^nX_i\theta_i = z,
}
which is clearly a special case of \cref{main_pro}.

\section{Additional Experiments} \label{appendix_experiments}
\begin{figure*}[tb]
  \begin{center}
    \includegraphics[scale=0.3]{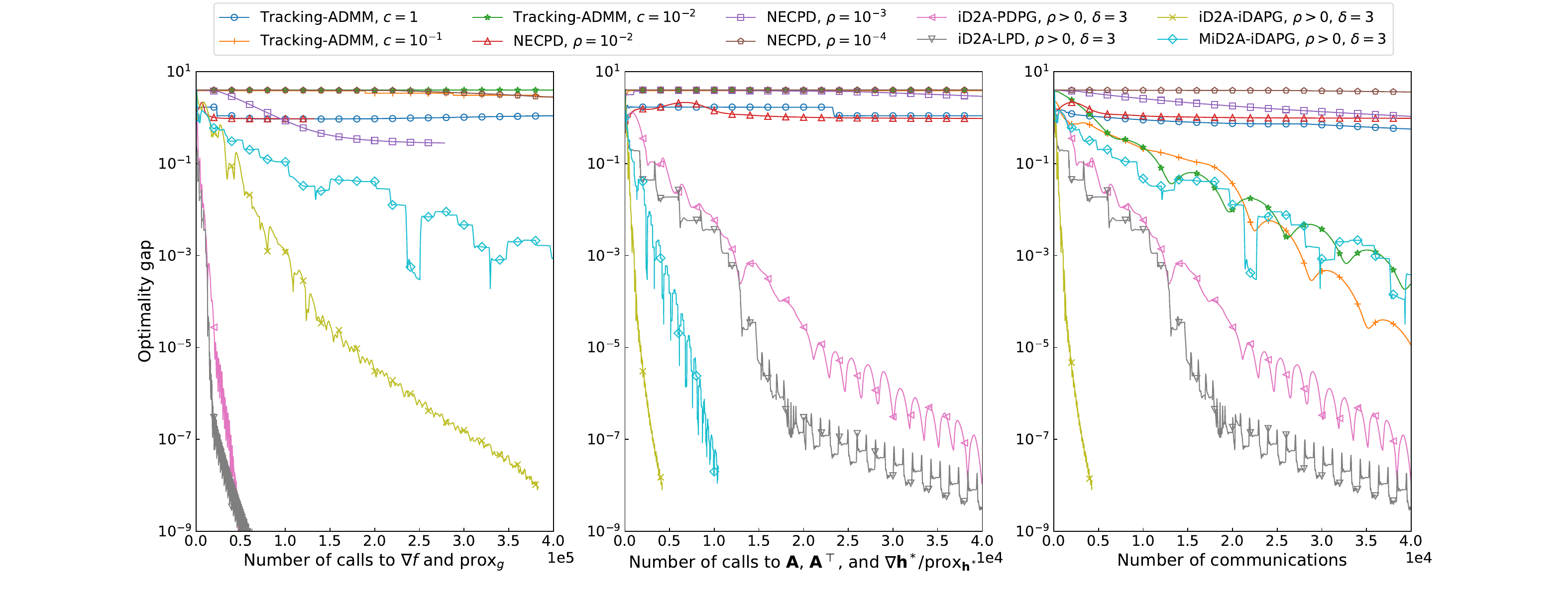}
    \caption {Results of Experiment III: Decentralized Resource Allocation ($n = 20, p = 10, d = 40, \kappa_C = 99, \kappa_f = 1000, \kappa_{A} = 8)$\protect\footnotemark.}
    \label{expt1}
  \end{center}
\end{figure*}
\footnotetext{$\kappa_{A} = \frac{\os^2(A)}{\us^2(A)}$, definitions of other condition numbers can be found in \cref{tab:complexity_h}. Since Tracking-ADMM and NECPD do not utilize $\nabla \mh^*$/$\prox_{\mh^*}$, the abscissa of the middle figure actually represents ``Number of calls to $\A$ and $\A\T$'' for these methods.}

\subsection{Experiment III: Decentralized Resource Allocation}
Consider the following decentralized resource allocation problem:
\eqe{ \label{0244}
  \min_{x_i \in \R^{d_i}, i \in \sI} \ &\sum_{i=1}^n \pa{\frac{1}{2}x_i\T P_ix_i + q_i\T x_i}, \\
  \text{s.t.} \ &\sum_{i=1}^{n}B_ix_i \leq b, \\
  &l_i \leq x_i \leq u_i, \ i \in \sI,
}
where $P_i \in \R^{d_i \times d_i}$ is a positive definite matrix, $q_i \in \R^{d_i}$, $B_i \in \R^{p \times d_i}$, $b \in \R^p$, $l_i, u_i \in \R^{d_i}$, $i \in \sI$. We can reformulate \cref{0244} as \cref{main_pro} by introducing the following definitions:
\eqe{ \label{0134}
  &f_i(x_i) \triangleq \frac{1}{2}x_i\T P_ix_i + q_i\T x_i, \ g_i \triangleq \iota_{\set{x_i \in \R^{d_i} | l_i \leq x_i \leq u_i}}, \\
  &A_i \triangleq B_i, i \in \sI; \ h \triangleq \iota_{\set{y \in \R^p | y \leq b}},
}
which obviously satisfy \cref{convex}. According to \cref{2024}, only iD2A and MiD2A can deal with the function $h$ defined in \cref{0134}. However, by introducing an auxiliary variable $y \in \R^p$, we can transform \cref{0244} into an equivalent problem that can be solved by other algorithms listed in \cref{2024}. Specifically, it can be easily verified that \cref{0244} is equivalent to
\eqe{ \label{02441}
  \min_{x_i \in \R^{d_i}, i \in \sI; y \in \R^p} \ &\sum_{i=1}^{n} \pa{\frac{1}{2}x_i\T P_ix_i + q_i\T x_i}, \\
  \text{s.t.} \ &\sum_{i=1}^{n}B_ix_i + y = b, \\
  &l_i \leq x_i \leq u_i, \ i \in \sI, \\
  &\0 \leq y \leq u_y,
}
if $u_y \in \R^p$ is sufficiently large \footnote{The upper bound of $y$ is useless for ensuring the equivalence between \cref{0244,02441}; it is introduced to make $y$ lie in a bounded set, because Tracking-ADMM and NECPD can only deal with those problems with bounded constrained sets.}. Similarly, we can reformulate \cref{02441} as \cref{main_pro} using the same definitions as in \cref{0134}, but with different $f_n$, $g_n$, $A_n$, and $h$:
\eqe{ \label{01341}
  &f_n(z) \triangleq \frac{1}{2}x_n\T P_nx_n + q_n\T x_n, \ g_n \triangleq \iota_{\set{z \in \R^{d_n+p} | l_z \leq z \leq u_z}}, \\
  &A_n \triangleq [B_n, \I], \ h \triangleq \iota_{\set{b}},
}
where $z = \coll{x_n, y}$, $\ l_z = \coll{l_n, \0_p}$, and $\ u = \coll{u_n, u_y}$. We should notice that $f_n$ in \cref{01341} is not strongly convex, which is significantly different from \cref{0134}.
According to \cref{2024}, Tracking-ADMM, NECPD, and Proj-IDEA are capable of solving \cref{main_pro} with the above definitions. However, since Proj-IDEA is a continuous-time algorithm and thus challenging to compare directly, we focus on the first two algorithms as the primary baselines for this experiment. Specifically, iD2A and MiD2A are applied to solve \cref{0244} directly, while Tracking-ADMM and NECPD are employed to address its equivalent problem \cref{02441}.

In this experiment, we randomly generate the data, namely $P_i$, $q_i$, $B_i$, $b$, $l_i$, $u_i$, and $u_y$, with dimensions set to $n = 20$, $p = 10$, and $d_i = 2$ for $i \in \sI$. Specifically, we construct $P_i$ by using $P_i = Q_i\Lambda_i Q_i\T$, where $Q_i \in \R^{d_i \times d_i}$ is an orthogonal matrix and $\Lambda_i \in \R^{d_i \times d_i}$ is diagonal matrix. The matrix $Q_i$ is generated by applying QR decomposition to a random matrix $M_i \in \R^{d_i \times d_i}$, whose elements are sampled from the standard normal distribution $\N(0, 1)$. The diagonal elements of $\Lambda_i$ are uniformly sampled from $[1, 1000]$. The elements of $B_i$ and $q_i$ are also sampled from $\N(0, 1)$. For $u_i$ and $b$, their elements are generated by taking the absolute values of numbers sampled from $\N(0, 1)$. We will implement multiple times if necessary (but usually not) to guarantee that $b > \0$. Finally, we set $l_i = \0$ and $u_y = 10^5\cdot b$. We guarantee that $l_i = \0$ and $b > \0$. This implies that: (1) the problem \cref{0244} is always feasible because $\0$ is a trivial feasible solution; and (2) \cref{strong_duality} always holds for \cref{0244}, since there always exists a point $\mx \in \ri{\dom{g}}$ sufficiently close to $\0$ such that $A\mx < b$.

Similar to DCPA and NPGA in Experiment I, Tracking-ADMM and NECPD also require a symmetric and doubly stochastic mixing matrix $W \in \R^{n \times n}$. We generate $W$ for Tracking-ADMM and NECPD, and $C$ for iD2A using the same method as in Experiment I. Tracking-ADMM has a single parameter $c>0$ to tune, while NECPD requires two parameters: $\rho>0$ and $0 < \eta \leq \rho$. Since there are no theoretical results for Tracking-ADMM and NECPD that allow us to determine the optimal parameters, we must manually tune them. Most parameters for iD2A are set according to \cref{1717}, but we still need to manually tune $\delta > 0$. For Tracking-ADMM and iD2A, we experiment with many different values of $c$ and $\delta$, and for NECPD, we also test various $\rho$ while setting $\eta = \rho$. However, to ensure clarity in the convergence plots, we only present the experimental results corresponding to the parameter configurations with better performance.
Additionally, at each iteration of Tracking-ADMM and NECPD, a constrained optimization problem needs to be solved exactly. This problem can be formulated as:
\eqe{ \label{0341}
  \min_{x \in \sX} \ \tf(x),
}
where $\sX \subseteq \R^m$ is a closed and compact set, and $\tf: \R^m \rightarrow \R$ is a convex function for Tracking-ADMM and a strongly convex function for NECPD. Under the setting of this experiment, $\tf$ is also smooth and $\iota_{\sX}$ is proximal-friendly. Thus, we can reformulate \cref{0341} as
\eqe{ \label{03411}
  \min_{x \in \R^m} \ \tg(x) = \tf(x) + \iota_{\sX},
}
which can be solved by APG with optimal complexity, provided that only first-order information ($\nabla \tf$ and $\prox_{\iota_{\sX}}$) is available. Therefore, we employ APG to solve the subproblem at each iteration of Tracking-ADMM and NECPD, using different step schemes for the convex case (Tracking-ADMM) and the strongly convex case (NECPD). It is important to note that Tracking-ADMM and NECPD utilize the exact solution of the subproblem, which is impractical in our case. Instead, we replace the exact solution $x^*$ with an approximate solution $\hat{x}$ that satisfies $\dist{\0, \partial \tf(\hat{x})} \leq 10^{-10}$. This ensures that $\norm{\hat{x}-x^*}$ remains sufficiently small. Besides, we employ a warm start strategy by using the approximate solution from the previous iteration as the initial solution for the current iteration's subproblem. This approach significantly reduces the total number of iterations required by APG.

The experimental results are presented in \cref{expt1}. From the figures, we observe that iD2A and MiD2A converge significantly faster than Tracking-ADMM and NECPD, w.r.t. the number of calls to $\nabla f$ and $\prox_g$, as well as the number of calls to $\A$, $\A\T$, and $\nabla \mh^*$/$\prox_{\mh^*}$. Although the communication complexity of Tracking-ADMM is slightly lower than that of MiD2A, iD2A still demonstrates the lowest communication complexity among all algorithms. Additionally, it is evident that the subproblem solver plays a crucial role in the performance of iD2A. If the communication cost or the cost associated with calls to $\A$, $\A\T$, and $\nabla \mh^*$/$\prox_{\mh^*}$ is higher than that to calls of $\nabla f$ and $\prox_g$, then iDAPG is the better choice. Conversely, if the situation is reversed, we should opt for PDPG or LPD.

\section{Additional Remarks} \label{appendix_remarks}
\begin{remark} \label{remark_1}
  We first consider Case 2), where $\A$ has full row rank. Let $M = \max\pa{\rho\ove{C}, \min_{i \in \sI}\frac{\mins^2(A_i)}{L_i}, \frac{\mine{\A\A\T + \rho L_f\mC}}{L_f}}$. When $\rho=0$, $\frac{\mine{\A\A\T + \rho L_f\mC}}{L_f} = \frac{\mins^2(\A)}{L_f} \leq \min_{i \in \sI}\frac{\mins^2(A_i)}{L_i}$, hence $M = \min_{i \in \sI}\frac{\mins^2(A_i)}{L_i}$. When $\rho>0$, it is shown in \cref{complexity} that an approximately optimal choice of $\rho$ is $\rho^*(C) = \frac{\max_{i \in \sI}\frac{\os^2(A_i)}{\mu_i} +\frac{L_{h^*}}{n}}{\ove{C}}$. Under this choice, $\rho\ove{C} = \max_{i \in \sI}\frac{\os^2(A_i)}{\mu_i} +\frac{L_{h^*}}{n}$. By Weyl's inequality \cite{horn2012matrix}, we have $\frac{\mine{\A\A\T + \rho L_f\mC}}{L_f} \leq \frac{1}{L_f}(\rho L_f\mine{\mC} + \ove{\A\A\T}) = \frac{\os^2(\A)}{L_f} < \rho\ove{C}$, hence $M = \max\pa{\rho\ove{C}, \min_{i \in \sI}\frac{\mins^2(A_i)}{L_i}} = \rho\ove{C}$. For Case 3), let $M' = \max\pa{\rho\ove{C}, \frac{\mine{\A\A\T + \rho L_f\mC}}{L_f}}$. By Weyl's inequality, we have $\frac{\mine{\A\A\T + \rho L_f\mC}}{L_f} \leq \frac{1}{L_f}(\mine{\A\A\T} + \rho L_f\ove{\mC}) \leq \rho\ove{C}$ for any $\rho \geq 0$, hence $M' = \rho\ove{C}$. Therefore, for both Cases 2) and 3), $\kappa_{\fr}$ is independent of $\mine{\A\A\T + \rho L_f\mC}$ in practice.
\end{remark}

\begin{remark} \label{remark_2}
  Alternatively, we can set $e_{\ml,1}$ to an upper bound of $\frac{\mu_{\fr}}{\ove{C}}\pa{\sqrt{\theta_{\ml}}-\sqrt{1-\frac{1}{\sqrt{\kappa_{\fr}}}}}^2\pa{\fr(\my^0) - \fr(\my^*)}$.
  Since $\fr$ is $\mu_{\fr}$-strongly convex on $\Range{\sqrt{\mC}}$ and differentiable, and $\my^0, \my^* \in \Range{\sqrt{\mC}}$, we have
  \eqe{
    &\fr(\my^0) - \fr(\my^*) \\
    \leq& \frac{1}{2\mu_{\fr}}\norm{\nabla \fr(\my^0)}^2 \\
    \leq& \frac{1}{2\mu_{\fr}}\pa{\norm{\sqrt{\mC}\tilde{\ml}}^2 + \ove{C}\norm{\tilde{\ml}-\ml^*(\my^0)}^2} \\
    \leq& \frac{1}{2\mu_{\fr}}\bigg(\norm{\sqrt{\mC}\tilde{\ml}}^2 + \frac{\ove{C}}{\mu_H^2}\Big(\norm{\nabla_{\ml} \cT(\mz^0, \tilde{\mx}, \tilde{\ml})} \\
    &+ \frac{\os(\A)}{\mu_f}\dist{\0, \partial_{\mx} \cT(\mz^0, \tilde{\mx}, \tilde{\ml})}\Big)^2\bigg) = C,
  }
  where $(\tilde{\mx}, \tilde{\ml})$ is an approximate solution to \cref{saddle_pro} with $\mz^0 = \sqrt{\mC}\my^0$, and the last inequality follows from the proof of \cref{2046}.
  Therefore, in practice, we can set $e_{\ml,1} = \frac{\mu_{\fr}}{\ove{C}}\pa{\sqrt{\theta_{\ml}}-\sqrt{1-\frac{1}{\sqrt{\kappa_{\fr}}}}}^2C$.
\end{remark}

\newpage
\section{Oracle Complexities of SOTA Algorithms for Solving Different Cases of Problem \cref{general_saddle_pro}} \label{appendix_oracle_complexity}
\begin{table}[h]
  \caption{\cite{li2025inexact}\protect\footnotemark The oracle complexities of SOTA first-order algorithms to solve different cases of Problem \cref{general_saddle_pro}, along with the corresponding lower bounds (if available).}
  \label{1229}
  \scriptsize
  \renewcommand{\arraystretch}{0.5}
  \centering
    \begin{threeparttable}[b]
      \begin{tabularx}{\textwidth}{ccC}
        \toprule
                                                             & Additional assumptions & Oracle complexity\tnote{1}                                                                                                                                                                                        \\
        \midrule
        \multicolumn{3}{c}{\textbf{Strongly-Convex-Concave Case: \cref{2255}}}                                                                                                                                                                                                                            \\
        \midrule
        LPD \cite{thekumparampil2022lifted}                  &                        & $\bO{\max\pa{\sqrt{L_x/\epsilon}, \frac{\os(B)}{\sqrt{\mu_x\epsilon}}, \sqrt{L_y/\epsilon}}}$                                                                                                     \\
        \midrule
        ABPD-PGS \cite{luo2024accelerated}                   &                        & $\bO{\max\pa{\sqrt{\kappa_x} \log \pa{\frac{1}{\epsilon}}, \frac{\os(B)}{\sqrt{\mu_x\epsilon}}, \sqrt{L_y/\epsilon}}}$                                                                           \\
        \midrule
        Lower bound\tnote{2} \cite{thekumparampil2022lifted} & $f_2 = 0$, $g_2 = 0$   & $\low{\max\pa{\sqrt{\kappa_x} \log \pa{\frac{1}{\epsilon}}, \frac{\os(B)}{\sqrt{\mu_x\epsilon}}, \sqrt{L_y/\epsilon}}}$                                                                                \\
        \midrule
        \multicolumn{3}{c}{\textbf{Strongly-Convex-Strongly-Concave Case: \cref{2255}, $g_1$ is $\mu_y$-strongly convex}}                                                                                                                                                                                 \\
        \midrule
        LPD \cite{thekumparampil2022lifted}, ABPD-PGS \cite{luo2024accelerated}                  &                        & $\bO{\max\pa{\sqrt{\kappa_x}, \sqrt{\kappa_{xy}}, \sqrt{\kappa_y}} \log \pa{\frac{1}{\epsilon}}}$                                                                                 \\
        \midrule
        APDG \cite{kovalev2022accelerated}                   & $f_2 = 0$, $g_2 = 0$   & $\bO{\max\pa{\sqrt{\kappa_x}, \sqrt{\kappa_{xy}}, \sqrt{\kappa_y}} \log \pa{\frac{1}{\epsilon}}}$ \\
        \midrule
        iDAPG \cite{li2025inexact}                                          &                        & \makecell{$\mathcal{A}$: $\tilde{\mO}\pa{\sqrt{\kappa_x}\max\pa{\sqrt{\kappa_{xy}}, \sqrt{\kappa_y}} \log \pa{\frac{1}{\epsilon}}}$ \tnote{3}                                               \\ $\mathcal{B}$: $\bO{\max\pa{\sqrt{\kappa_{xy}}, \sqrt{\kappa_y}} \log \pa{\frac{1}{\epsilon}}}$}                        \\
        \midrule
        Lower bound \cite{zhang2022lower}                    & $f_2 = 0$, $g_2 = 0$   & $\low{\max\pa{\sqrt{\kappa_x}, \sqrt{\kappa_{xy}}, \sqrt{\kappa_y}} \log \pa{\frac{1}{\epsilon}}}$                                                                                      \\
        \midrule
        \multicolumn{3}{c}{\textbf{Strongly-Convex-Full-Rank Case: \cref{2255}, $f_2 = 0$, $B$ has full row rank}}                                                                                                                                                                                        \\
        \midrule
        APDG \cite{kovalev2022accelerated}                   & $g_2 = 0$              & $\bO{\max\pa{\sqrt{\kappa_{xy'}}, \sqrt{\kappa_x\kappa_B}, \kappa_B} \log \pa{\frac{1}{\epsilon}}}$                                                         \\
        \midrule
        iDAPG \cite{li2025inexact}                                          &                        & \makecell{$\mathcal{A}$: $\tilde{\mO}\pa{\sqrt{\kappa_x}\max\pa{\sqrt{\kappa_{xy'}}, \sqrt{\kappa_x\kappa_B}} \log \pa{\frac{1}{\epsilon}}}$                          \\ $\mathcal{B}$: $\bO{\max\pa{\sqrt{\kappa_{xy'}}, \sqrt{\kappa_x\kappa_B}} \log \pa{\frac{1}{\epsilon}}}$}                        \\
        \midrule
        \multicolumn{3}{c}{\textbf{Strongly-Convex-Linear Case: \cref{2255}, $f_2 = 0$, $g_2 = 0$, $g_1$ is linear}}                                                                                                                                                                           \\
        \midrule
        Algorithm 1 \cite{salim2022optimal}               &                        & \makecell{$\mathcal{A}$: $\bO{\sqrt{\kappa_x} \log \pa{\frac{1}{\epsilon}}}$, $\mathcal{B}$: $\bO{\sqrt{\kappa_x\kappa_{B'}} \log \pa{\frac{1}{\epsilon}}}$}                          \\
        \midrule
        APDG \cite{kovalev2022accelerated}                   &                        & $\bO{\sqrt{\kappa_x\kappa_{B'}} \log \pa{\frac{1}{\epsilon}}}$
        \\
        \midrule
        iDAPG \cite{li2025inexact}                                          &                        & \makecell{$\mathcal{A}$: $\tilde{\mO}\pa{\kappa_x\sqrt{\kappa_{B'}} \log \pa{\frac{1}{\epsilon}}}$, $\mathcal{B}$: $\bO{\sqrt{\kappa_x\kappa_{B'}} \log \pa{\frac{1}{\epsilon}}}$} \\
        \midrule
        Lower bound \cite{salim2022optimal}                  &                        & $\mathcal{A}$: $\low{\sqrt{\kappa_x} \log \pa{\frac{1}{\epsilon}}}$, $\mathcal{B}$: $\low{\sqrt{\kappa_x\kappa_{B'}} \log \pa{\frac{1}{\epsilon}}}$                                   \\
        \midrule
        \multicolumn{3}{c}{\textbf{The Case that Satisfies \cref{2255,general_saddle_pro_assumption}}}                                                                                                                                                                                \\
        \midrule
        PDPG \cite{li2025inexact}        & $g_3 = 0$              & $\bO{\max\pa{\kappa_{xy^2}, \kappa_x\kappa_{xy^3}} \log \pa{\frac{1}{\epsilon}}}$                                                                           \\
        \midrule
        iDAPG \cite{li2025inexact} &                        & \makecell{$\mathcal{A}$: $\tilde{\mO}\pa{\sqrt{\kappa_x}\max\pa{\sqrt{\kappa_{xy^2}}, \sqrt{\kappa_x\kappa_{xy^3}}} \log \pa{\frac{1}{\epsilon}}}$ \\ $\mathcal{B}$: $\bO{\max\pa{\sqrt{\kappa_{xy^2}}, \sqrt{\kappa_x\kappa_{xy^3}}} \log \pa{\frac{1}{\epsilon}}}$}                        \\
        \midrule
        \multicolumn{3}{c}{\textbf{Dual-Strongly-Convex Case: \cref{2255}, $\varphi$ is $\mu_{\varphi}$-strongly convex}}                                                                                                                                     \\
        \midrule
        iDAPG \cite{li2025inexact} &                        & \makecell{$\mathcal{A}$: $\tilde{\mO}\pa{\sqrt{\kappa_x}\max\pa{\sqrt{\frac{L_y}{\mu_{\varphi}}}, \frac{\os(B)}{\sqrt{\mu_x\mu_{\varphi}}}} \log \pa{\frac{1}{\epsilon}}}$                            \\ $\mathcal{B}$: $\bO{\max\pa{\sqrt{\frac{L_y}{\mu_{\varphi}}}, \frac{\os(B)}{\sqrt{\mu_x\mu_{\varphi}}}} \log \pa{\frac{1}{\epsilon}}}$}                        \\        
        \bottomrule
      \end{tabularx}
      \begin{tablenotes}
        \item $\kappa_x = \frac{L_x}{\mu_x}$, $\kappa_y = \frac{L_y}{\mu_y}$, $\kappa_{B} = \frac{\os^2(B)}{\mins^2(B)}$, $\kappa_{B'} = \frac{\os^2(B)}{\us^2(B)}$, $\kappa_{xy} = \frac{\os^2(B)}{\mu_x\mu_y}$, $\kappa_{xy'} = \frac{L_xL_y}{\mins^2(B)}$, $\kappa_{xy^2} = \frac{L_xL_y}{\mine{BB\T + L_xP}}$, $\kappa_{xy^3} = \frac{\os^2(B)}{\mine{BB\T + L_xP}}$.
        \item [1] $\mathcal{A}$: $\nabla f_1$ and $\prox_{f_2}$; $\mathcal{B}$: $B$, $B\T$, $\nabla g_1$, and $\prox_{g_2}$. If only one complexity is provided, it suggests that the oracle complexities of $\mathcal{A}$ and $\mathcal{B}$ are the same.
        \item [2] When only one lower bound is provided, it suggests that only the lower bound of the maximum of the oracle complexities of $\mathcal{A}$ and $\mathcal{B}$ is available.
        \item [3] $\tilde{\mO}$ hides a logarithmic factor that depends on the problem parameters; refer to \cite{li2025inexact} for further details.
      \end{tablenotes}
    \end{threeparttable}
\end{table}
\footnotetext{Most of the results in this table are adapted from \cite[Table I]{li2025inexact}, except for the strongly-convex-concave case.}

\newpage
\section{Mapping Relations Between \cref{inner_pro} and \cref{general_saddle_pro}} \label{appendix_mapping}
\begin{table*}[h]
  \caption{The mapping relations between \cref{inner_pro} and \cref{general_saddle_pro} under \cref{strong_duality,G,convex,0129}.}
  \label{0154}
  \scriptsize
  \centering
  \begin{threeparttable}[b]
    \begin{tabularx}{\textwidth}{cC}
      \toprule
      Different cases of \cref{inner_pro}  & Corresponding cases of \cref{general_saddle_pro} \\
      \midrule
      $\rho > 0$                           & \makecell{Strongly-Convex-Concave Case            \\ (\textcircled{1}: $g_1 \triangleq  \zeta, \ g_2 \triangleq \mh^*$; \textcircled{3}: $g_1 \triangleq \mh^* + \zeta, \ g_2 \triangleq 0$)}                           \\
      \midrule
      $h^*$ is $\mu_{h^*}$-strongly convex & \makecell{Strongly-Convex-Strongly-Concave Case  \\ (\textcircled{1}: $g_1 \triangleq  \frac{\mu_{h^*}}{2n}\norm{\cdot}^2 + \zeta, \ g_2 \triangleq \mh^*-\frac{\mu_{h^*}}{2n}\norm{\cdot}^2$\tnote{1}; \textcircled{3}: $g_1 \triangleq  \mh^* + \zeta, \ g_2 \triangleq 0$)}                           \\
      \midrule
      $g_i = 0$, $A_i$ has full row rank   & \makecell{Strongly-Convex-Full-Rank Case         \\ (\textcircled{1}: $g_1 \triangleq  \zeta, \ g_2 \triangleq \mh^*$; \textcircled{3}: $g_1 \triangleq \mh^* + \zeta, \ g_2 \triangleq 0$)}                           \\
      \bottomrule
    \end{tabularx}
    \begin{tablenotes}
      \item We consider two possible cases of $h^*$: \textcircled{1} $h^*$ is proximal-friendly; \textcircled{3} $h^*$ is $L_h$-smooth. For all cases, it holds that $f_1 \triangleq f, \ f_2 \triangleq g, \ B \triangleq \A$.  The function $\zeta$ is defined by $\zeta (\ml) = \frac{\rho}{2}\ml\T\mC\ml + \ml\T\mz^k$.
      \item [1] We need to notice two facts here: (1) $\mh^*-\frac{\mu_{h^*}}{2n}\norm{\cdot}^2$ is convex if $h^*$ is $\mu_{h^*}$-strongly convex; (2) $\mh^*-\frac{\mu_{h^*}}{2n}\norm{\cdot}^2$ is proximal-friendly if $h^*$ is proximal-friendly.
    \end{tablenotes}
  \end{threeparttable}
\end{table*}

\begin{table*}[h]
  \caption{The mapping relations between \cref{inner_pro} and \cref{general_saddle_pro} under \cref{strong_duality,G,convex,hs_smooth,1643}.}
  \label{1823}
  \footnotesize
  \centering
  \begin{threeparttable}[b]
    \begin{tabularx}{\textwidth}{cC}

      \toprule
      Different cases of \cref{inner_pro}               & Corresponding cases of \cref{general_saddle_pro}                        \\
      \midrule
      Case 1\tnote{1}: $h^*$ is $\mu_{h^*}$-strongly convex              & \makecell{Strongly-Convex-Strongly-Concave Case                         \\ ($g_1 \triangleq \mh^* + \zeta, \ g_2 \triangleq 0$; If \textcircled{1}: $g_1 \triangleq \frac{\mu_{h^*}}{2n}\norm{\cdot}^2 + \zeta, \ g_2 \triangleq \mh^*-\frac{\mu_{h^*}}{2n}\norm{\cdot}^2$)}                           \\
      \midrule
      Case 2: $g_i = 0$, $A_i$ has full row rank                & \makecell{Strongly-Convex-Full-Rank Case                                \\ ($g_1 \triangleq \mh^* + \zeta, \ g_2 \triangleq 0$; If \textcircled{1}: $g_1 \triangleq \zeta, \ g_2 \triangleq \mh^*$)}                        \\
      \midrule
      Case 2.1: $g_i = 0$, $A_i$ has full row rank, $h^*$ is linear, $\rho = 0$ & \makecell{Strongly-Convex-Linear Case ($g_1 \triangleq \mh^* + \zeta$) \\ or Strongly-Convex-Full-Rank Case ($g_1 \triangleq 0, \ g_2 \triangleq \mh^* + \zeta$)} \\
      \midrule
      Case 3: $g_i = 0$, $A$ has full row rank, $\rho > 0$      & \makecell{The Case that Satisfies \cref{2255,general_saddle_pro_assumption}                     \\ ($g_1 \triangleq \mh^* + \zeta, \ g_2 \triangleq 0$; If \textcircled{1}: $g_1 \triangleq \zeta, \ g_2 \triangleq \mh^*$)}                        \\
      \bottomrule
    \end{tabularx}
    \begin{tablenotes}
      \item For all cases, it holds that $f_1 \triangleq f, \ f_2 \triangleq g, \ B \triangleq \A$. \textcircled{1} and $\zeta$ are the same as \cref{0154}.
      \item [1] These cases are identical to those in \cref{tab:complexity_h}.
    \end{tablenotes}
  \end{threeparttable}
\end{table*}

\newpage
\section{Decentralized Implementations of iD2A and MiD2A} \label{appendix_decen_iD2A}
\begin{algorithm}[h]
  \caption{Decentralized Implementation of iD2A}
  \label{alg:decen_iD2A}
  \begin{algorithmic}[1]
    \small
    \Require
    $K>0$, $\rho \geq 0$, $C$, $L_{\fr}$, $\mu_{\fr}$
    \Ensure
    $x_1^K, \cdots, x_n^K$
    \Statex \hspace{-\algorithmicindent} For agent $i = 1, \cdots, n$, implement:
    \State $x_i^0 = \lambda_i^0 = \0$, $z_i^0 = w_i^0 = \0$
    \State Set $\beta_k = \frac{\sqrt{\kappa_{\fr}}-1}{\sqrt{\kappa_{\fr}}+1}$ if $\mu_{\fr} > 0$, where $\kappa_{\fr} = \frac{L_{\fr}}{\mu_{\fr}}$; otherwise set $\beta_k = \frac{k}{k+3}$.
    \For {$k=0,\dots, K-1$}
    \If{$\rho = 0$}
    \State Solve
    \eqe{
      \min_{x_i \in \R^{d_i}}\max_{\lambda_i \in \R^p} f_i(x_i) + g_i(x_i) + \lambda_i\T A_ix_i - \pa{\frac{1}{n}h^*(\lambda_i) + \lambda_i\T z_i^k}
    }
    to obtain an inexact solution $\pa{x_i\+, \lambda_i\+}$.
    \Else
    \State Collaborate with neighbour agents to solve
    \eqe{
      \min_{\mx \in \R^d}\max_{\ml \in \R^{np}} f(\mx) + g(\mx) + \ml\T\A\mx - \pa{\mh^*(\ml) + \frac{\rho}{2}\ml\T\mC\ml + \ml\T\mz^k}
    }
    to obtain an inexact local solution $\pa{x_i\+, \lambda_i\+}$.
    \EndIf
    \State Send $\lambda_i\+$ to neighbour agents and receive $\lambda_j\+$ from them.
    \State $w_i\+ = z_i^k + \frac{1}{L_{\fr}}\sum_{j=1}^n c_{ij}\lambda_j\+$
    \State $z_i\+ = w_i\+ + \beta_k\lt(w_i\+-w_i^k\rt)$
    \EndFor
  \end{algorithmic}
\end{algorithm}

\begin{algorithm}[H]
  \caption{Subproblem Solving Procedure of iD2A-iDAPG for Agent $i$ When $\rho > 0$\protect\footnotemark}
  \begin{algorithmic}[1]
    \small
    \Require
    $L_{\varphi}$, $\mu_{\varphi}$, $z_i$, $x_i^0$, $\lambda_i^0$, stopping criterion
    \Ensure
    $x_i^k$, $\lambda_i^k$
    \State $v_i^0 = \lambda_i^0$
    \State Set $\beta_k = \frac{\sqrt{\kappa_{\varphi}}-1}{\sqrt{\kappa_{\varphi}}+1}$ if $\mu_{\varphi} > 0$, where $\kappa_{\varphi} = \frac{L_{\varphi}}{\mu_{\varphi}}$; otherwise set $\beta_k = \frac{k}{k+3}$.
    \State $k = 0$
    \While{$\pa{x_i^k, \lambda_i^k}$ does not satisfiy the stopping criterion}
    \State Solve
    \eqe{
      \min_{x_i \in \R^{d_i}} f_i(x_i) + g_i(x_i) + \dotprod{A_i\T v_i^k, x_i}
    }
    to obtain an inexact solution $x_i\+$.
    \State Send $v_i^k$ to neighbour agents and receive $v_j^k$ from them.
    \State $\lambda_i\+ = \prox_{\frac{1}{nL_{\varphi}} h^*}\pa{v_i^k - \frac{1}{L_{\varphi}}\pa{\rho\sum_{j=1}^n c_{ij}v_j^k + z_i - A_ix_i\+}}$
    \State $v_i\+ = \lambda_i\+ + \beta_k\pa{\lambda_i\+-\lambda_i^k}$
    \State $k = k+1$
    \EndWhile
  \end{algorithmic}
\end{algorithm}
\footnotetext{Here, we assume that the mapping relation between \cref{inner_pro} and \cref{general_saddle_pro} is given by $f_1 \triangleq f, \ f_2 \triangleq g, \ B \triangleq \A, \ g_1 \triangleq \zeta$, and $g_2 \triangleq \mh^*$, where $\zeta (\ml) = \frac{\rho}{2}\ml\T\mC\ml + \ml\T\mz^k$. The same applies to MiD2A.}

\newpage
\begin{algorithm}[h]
  \caption{Decentralized Implementation of MiD2A}
  \label{alg:decen_MiD2A}
  \begin{algorithmic}[1]
    \small
    \Require
    $T>0$, $K>0$, $\rho \geq 0$, $C$, $L_{\fr}$, $\mu_{\fr}$
    \Ensure
    $x_1^T, \cdots, x_n^T$
    \Statex \hspace{-\algorithmicindent} For agent $i = 1, \cdots, n$, implement:
    \State $x_i^0 = \lambda_i^0 = \0$, $z_i^0 = w_i^0 = \0$
    \State Set $\beta_k = \frac{\sqrt{\kappa_{\fr}}-1}{\sqrt{\kappa_{\fr}}+1}$ if $\mu_{\fr} > 0$, where $\kappa_{\fr} = \frac{L_{\fr}}{\mu_{\fr}}$; otherwise set $\beta_k = \frac{k}{k+3}$.
    \For {$k=0,\dots, T-1$}
    \If{$\rho = 0$}
    \State Solve
    \eqe{
      \min_{x_i \in \R^{d_i}}\max_{\lambda_i \in \R^p} f_i(x_i) + g_i(x_i) + \lambda_i\T A_ix_i - \pa{\frac{1}{n}h^*(\lambda_i) + \lambda_i\T z_i^k}
    }
    to obtain an inexact solution $\pa{x_i\+, \lambda_i\+}$.
    \Else
    \State Collaborate with neighbour agents to solve
    \eqe{
      \min_{\mx \in \R^d}\max_{\ml \in \R^{np}} f(\mx) + g(\mx) + \ml\T\A\mx - \pa{\mh^*(\ml) + \ml\T\mz^k + \frac{\rho}{2}\dotprod{\ml, \text{AcceleratedGossip}(\ml, \mC, K)}}
    }
    to obtain an inexact local solution $\pa{x_i\+, \lambda_i\+}$.
    \EndIf
    \State Collaborate with neighbour agents to execute $\text{AcceleratedGossip}(\ml\+, \mC, K)$ and obtain $\hat{\lambda}_i\+$.
    \State $w_i\+ = z_i^k + \frac{1}{L_{\fr}}\hat{\lambda}_i\+$
    \State $z_i\+ = w_i\+ + \beta_k\lt(w_i\+-w_i^k\rt)$
    \EndFor
  \end{algorithmic}
\end{algorithm}

\begin{algorithm}[H]
  \caption{Subproblem Solving Procedure of MiD2A-iDAPG for Agent $i$ When $\rho > 0$}
  \begin{algorithmic}[1]
    \small
    \Require
    $L_{\varphi}$, $\mu_{\varphi}$, $z_i$, $x_i^0$, $\lambda_i^0$, stopping criterion
    \Ensure
    $x_i^k$, $\lambda_i^k$
    \State $v_i^0 = \lambda_i^0$
    \State Set $\beta_k = \frac{\sqrt{\kappa_{\varphi}}-1}{\sqrt{\kappa_{\varphi}}+1}$ if $\mu_{\varphi} > 0$, where $\kappa_{\varphi} = \frac{L_{\varphi}}{\mu_{\varphi}}$; otherwise set $\beta_k = \frac{k}{k+3}$.
    \State $k = 0$
    \While{$\pa{x_i^k, \lambda_i^k}$ does not satisfiy the stopping criterion}
    \State Solve
    \eqe{
      \min_{x_i \in \R^{d_i}} f_i(x_i) + g_i(x_i) + \dotprod{A_i\T v_i^k, x_i}
    }
    to obtain an inexact solution $x_i\+$.
    \State Collaborate with neighbour agents to execute $\text{AcceleratedGossip}(\mv^k, \mC, K)$ and obtain $\hat{v}_i^k$.
    \State $\lambda_i\+ = \prox_{\frac{1}{nL_{\varphi}} h^*}\pa{v_i^k - \frac{1}{L_{\varphi}}\pa{\rho\hat{v}_i^k + z_i - A_ix_i\+}}$
    \State $v_i\+ = \lambda_i\+ + \beta_k\pa{\lambda_i\+-\lambda_i^k}$\
    \State $k = k+1$
    \EndWhile
  \end{algorithmic}
\end{algorithm}

\newpage
\section{Full Version of \cref{tab:complexity_h}} \label{appendix_full_table}
\begin{table*}[h]
  \caption{(Full version of \cref{tab:complexity_h}) Communication and oracle complexities of various algorithms across different scenarios.}
  \label{tab:complexity_h_full}
  \renewcommand{\arraystretch}{0.5}
  \centering
  \tabcolsep=-1.5pt
  \scriptsize
  \begin{threeparttable}[b]
    \begin{tabularx}{\textwidth}{>{\centering\arraybackslash}p{3cm}ccC}
      \toprule
                                                            & \makecell{Additional                                                                                                                                                                                                                                                                                     \\ assumptions}                                                                                                       & Communication complexity                                                                                                         & Oracle complexity\tnote{1} \\
      \midrule
      \multicolumn{4}{c}{\textbf{Case 1: \cref{G,convex,strong_duality,hs_smooth}, $h^*$ is $\mu_{h^*}$-strongly convex}}                                                                                                                                                                                                                                              \\
      \midrule
      DCPA \cite{alghunaim2021dual}, NPGA \cite{li2024npga} & \textcircled{1}      & $\bO{\max\pa{\kappa_f, \kappa_{pd}\kappa_C} \log \pa{\frac{1}{\epsilon}}}$                                                      & $\bO{\max\pa{\kappa_f, \kappa_{pd}\kappa_C} \log \pa{\frac{1}{\epsilon}}}$                                                                      \\
      \midrule
      \rowcolor{bgcolor}
      iD2A-LPD\tnote{2}, $\rho=0$                                 & \textcircled{1}      & $\bO{\sqrt{c_1\kappa_{pd}\kappa_C} \log \pa{\frac{1}{\epsilon}}}$                                                               & $\tilde{\mO}\pa{\sqrt{c_1\kappa_{pd}\kappa_C}\max\pa{\sqrt{\kappa_f}, \sqrt{\kappa_{pd}}} \log \pa{\frac{1}{\epsilon}}}$\tnote{3}                        \\
      \midrule
      \rowcolor{bgcolor}
      MiD2A-LPD, $\rho=0$                       & \textcircled{1}      & $\bO{\sqrt{c_1\kappa_{pd}\kappa_C} \log \pa{\frac{1}{\epsilon}}}$                                                               & $\tilde{\mO}\pa{\sqrt{c_1\kappa_{pd}}\max\pa{\sqrt{\kappa_f}, \sqrt{\kappa_{pd}}} \log \pa{\frac{1}{\epsilon}}}$                                \\
      \midrule
      \rowcolor{bgcolor}
      iD2A-LPD, $\rho>0$\tnote{4}                                 & \textcircled{1}      & $\tilde{\mO}\pa{\sqrt{\kappa_C}\max\pa{\sqrt{\kappa_f}, \sqrt{\kappa_{pd}}} \log \pa{\frac{1}{\epsilon}}}$                      & $\tilde{\mO}\pa{\sqrt{\kappa_C}\max\pa{\sqrt{\kappa_f}, \sqrt{\kappa_{pd}}} \log \pa{\frac{1}{\epsilon}}}$                                      \\
      \midrule
      \rowcolor{bgcolor}
      MiD2A-LPD, $\rho>0$                       & \textcircled{1}      & $\tilde{\mO}\pa{\sqrt{\kappa_C}\max\pa{\sqrt{\kappa_f}, \sqrt{\kappa_{pd}}} \log \pa{\frac{1}{\epsilon}}}$                      & $\tilde{\mO}\pa{\max\pa{\sqrt{\kappa_f}, \sqrt{\kappa_{pd}}} \log \pa{\frac{1}{\epsilon}}}$                                                     \\
      \midrule
      \rowcolor{bgcolor}
      iD2A-iDAPG, $\rho=0$                         & \textcircled{1}      & $\bO{\sqrt{c_1\kappa_{pd}\kappa_C} \log \pa{\frac{1}{\epsilon}}}$                                                               & $\mathcal{A}$: $\tilde{\mO}\pa{\sqrt{\kappa_f\kappa_C}\kappa_{pd} \log \pa{\frac{1}{\epsilon}}}$, $\mathcal{B}$: $\tilde{\mO}\pa{\sqrt{\kappa_C}\kappa_{pd} \log \pa{\frac{1}{\epsilon}}}$ \\
      \midrule
      \rowcolor{bgcolor}
      MiD2A-iDAPG, $\rho=0$                        & \textcircled{1}      & $\bO{\sqrt{c_1\kappa_{pd}\kappa_C} \log \pa{\frac{1}{\epsilon}}}$                                                               & $\mathcal{A}$: $\tilde{\mO}\pa{\sqrt{\kappa_f}\kappa_{pd} \log \pa{\frac{1}{\epsilon}}}$, $\mathcal{B}$: $\tilde{\mO}\pa{\kappa_{pd} \log \pa{\frac{1}{\epsilon}}}$ \\
      \midrule
      \rowcolor{bgcolor}
      iD2A-iDAPG, $\rho>0$                         & \textcircled{1}      & $\tilde{\mO}\pa{\sqrt{\kappa_{pd}\kappa_C} \log \pa{\frac{1}{\epsilon}}}$                                                       & $\mathcal{A}$: $\tilde{\mO}\pa{\sqrt{\kappa_f\kappa_{pd}\kappa_C} \log \pa{\frac{1}{\epsilon}}}$, $\mathcal{B}$:  $\tilde{\mO}\pa{\sqrt{\kappa_{pd}\kappa_C} \log \pa{\frac{1}{\epsilon}}}$ \\
      \midrule
      \rowcolor{bgcolor}
      MiD2A-iDAPG, $\rho>0$                        & \textcircled{1}      & $\tilde{\mO}\pa{\sqrt{\kappa_{pd}\kappa_C} \log \pa{\frac{1}{\epsilon}}}$                                                       & $\mathcal{A}$: $\tilde{\mO}\pa{\sqrt{\kappa_f\kappa_{pd}} \log \pa{\frac{1}{\epsilon}}}$, $\mathcal{B}$:  $\tilde{\mO}\pa{\sqrt{\kappa_{pd}} \log \pa{\frac{1}{\epsilon}}}$ \\
      \midrule
      \rowcolor{bgcolor}
      iD2A-LPD, $\rho = 0$                               &                      & $\bO{\sqrt{\kappa_C}\max\pa{\sqrt{c_1\kappa_{pd}}, \sqrt{\kappa_{h^*}}} \log \pa{\frac{1}{\epsilon}}}$                          & \makecell[c]{$\tilde{\mO}\big(\sqrt{\kappa_C}\max\pa{\sqrt{c_1\kappa_{pd}}, \sqrt{\kappa_{h^*}}}\cdot$                                         \\$\max\pa{\sqrt{\kappa_f}, \sqrt{\kappa_{pd}}, \sqrt{\kappa_{h^*}}} \log \pa{\frac{1}{\epsilon}}\big)$}           \\
      \midrule
      \rowcolor{bgcolor}
      MiD2A-LPD, $\rho = 0$                              &                      & $\bO{\sqrt{\kappa_C}\max\pa{\sqrt{c_1\kappa_{pd}}, \sqrt{\kappa_{h^*}}} \log \pa{\frac{1}{\epsilon}}}$                          & \makecell[c]{$\tilde{\mO}\big(\max\pa{\sqrt{c_1\kappa_{pd}}, \sqrt{\kappa_{h^*}}}\cdot$                                                        \\$\max\pa{\sqrt{\kappa_f}, \sqrt{\kappa_{pd}}, \sqrt{\kappa_{h^*}}} \log \pa{\frac{1}{\epsilon}}\big)$} \\
      \midrule
      \rowcolor{bgcolor}
      iD2A-LPD, $\rho > 0$                               &                      & $\tilde{\mO}\pa{\sqrt{\kappa_C}\max\pa{\sqrt{\kappa_f}, \sqrt{\kappa_{pd}}, \sqrt{\kappa_{h^*}}} \log \pa{\frac{1}{\epsilon}}}$ & $\tilde{\mO}\pa{\sqrt{\kappa_C}\max\pa{\sqrt{\kappa_f}, \sqrt{\kappa_{pd}}, \sqrt{\kappa_{h^*}}} \log \pa{\frac{1}{\epsilon}}}$                 \\
      \midrule
      \rowcolor{bgcolor}
      MiD2A-LPD, $\rho>0$                                &                      & $\tilde{\mO}\pa{\sqrt{\kappa_C}\max\pa{\sqrt{\kappa_f}, \sqrt{\kappa_{pd}}, \sqrt{\kappa_{h^*}}} \log \pa{\frac{1}{\epsilon}}}$ & $\tilde{\mO}\pa{\max\pa{\sqrt{\kappa_f}, \sqrt{\kappa_{pd}}, \sqrt{\kappa_{h^*}}} \log \pa{\frac{1}{\epsilon}}}$                                \\
      \midrule
      \rowcolor{bgcolor}
      iD2A-iDAPG, $\rho=0$                         &                      & $\bO{\sqrt{\kappa_C}\max\pa{\sqrt{c_1\kappa_{pd}}, \sqrt{\kappa_{h^*}}} \log \pa{\frac{1}{\epsilon}}}$                          & \makecell{$\mathcal{A}$: $\tilde{\mO}\pa{\sqrt{\kappa_f\kappa_C}\max\pa{\kappa_{pd}, \kappa_{h^*}} \log \pa{\frac{1}{\epsilon}}}$               \\ $\mathcal{B}$: $\tilde{\mO}\pa{\sqrt{\kappa_C}\max\pa{\kappa_{pd}, \kappa_{h^*}} \log \pa{\frac{1}{\epsilon}}}$} \\
      \midrule
      \rowcolor{bgcolor}
      MiD2A-iDAPG, $\rho=0$                        &                      & $\bO{\sqrt{\kappa_C}\max\pa{\sqrt{c_1\kappa_{pd}}, \sqrt{\kappa_{h^*}}} \log \pa{\frac{1}{\epsilon}}}$                          & \makecell{$\mathcal{A}$: $\tilde{\mO}\pa{\sqrt{\kappa_f}\max\pa{\kappa_{pd}, \kappa_{h^*}} \log \pa{\frac{1}{\epsilon}}}$                       \\ $\mathcal{B}$: $\tilde{\mO}\pa{\max\pa{\kappa_{pd}, \kappa_{h^*}} \log \pa{\frac{1}{\epsilon}}}$} \\
      \midrule
      \rowcolor{bgcolor}
      iD2A-iDAPG, $\rho>0$                         &                      & $\tilde{\mO}\pa{\sqrt{\kappa_C}\max\pa{\sqrt{\kappa_{pd}}, \sqrt{\kappa_{h^*}}} \log \pa{\frac{1}{\epsilon}}}$                  & \makecell{$\mathcal{A}$: $\tilde{\mO}\pa{\sqrt{\kappa_f\kappa_C}\max\pa{\sqrt{\kappa_{pd}}, \sqrt{\kappa_{h^*}}} \log \pa{\frac{1}{\epsilon}}}$ \\ $\mathcal{B}$:  $\tilde{\mO}\pa{\sqrt{\kappa_C}\max\pa{\sqrt{\kappa_{pd}}, \sqrt{\kappa_{h^*}}} \log \pa{\frac{1}{\epsilon}}}$} \\
      \midrule
      \rowcolor{bgcolor}
      MiD2A-iDAPG, $\rho>0$                        &                      & $\tilde{\mO}\pa{\sqrt{\kappa_C}\max\pa{\sqrt{\kappa_{pd}}, \sqrt{\kappa_{h^*}}} \log \pa{\frac{1}{\epsilon}}}$                  & \makecell{$\mathcal{A}$: $\tilde{\mO}\pa{\sqrt{\kappa_f}\max\pa{\sqrt{\kappa_{pd}}, \sqrt{\kappa_{h^*}}} \log \pa{\frac{1}{\epsilon}}}$         \\ $\mathcal{B}$:  $\tilde{\mO}\pa{\max\pa{\sqrt{\kappa_{pd}}, \sqrt{\kappa_{h^*}}} \log \pa{\frac{1}{\epsilon}}}$} \\
      \bottomrule
    \end{tabularx}
    \begin{tablenotes}
      \item $\mathcal{A}$: $\nabla f$ and $\prox_g$; $\mathcal{B}$: $\A$, $\A\T$, and $\nabla \mh^*$/$\prox_{\mh^*}$ ($\prox_{\mh^*}$/$\nabla \mh^*$ denotes $\prox_{\mh^*}$ if $h^*$ is proximal-friendly; otherwise, it denotes $\nabla \mh^*$). The oracle complexity of $\A$ and $\A\T$ represents the number of matrix-vector multiplications involving $\A$ and $\A\T$. \textcircled{1}: $h^*$ is proximal-friendly, \textcircled{2}: $h = \iota_{\set{b}}$, i.e., $h^*$ is a linear function. $c_1 = \frac{\max_{i \in \sI}\frac{\os^2(A_i)}{\mu_i}}{\os^2(\A)/\mu_f} \leq 1$, $c_2 = \frac{\min_{i \in \sI}\frac{\mins^2(A_i)}{L_i}}{\mins^2(\A)/L_f} \geq 1$, $\kappa_C = \frac{\ove{C}}{\ue{C}}$, $\kappa_f = \frac{L_f}{\mu_f}$, $\kappa_{pd} = \frac{\os^2(\A)}{\mu_f\mu_{h^*}/n}$, $\kappa_{pd'} = \frac{L_fL_{h^*}/n}{\mins^2(\A)}$, $\kappa_{pd^2} = \frac{L_fL_{h^*}/n}{\mine{\A\A\T + \rho L_f\mC}}$, $\kappa_{pd^3}
        = \frac{L_fL_{h^*}/n}{\mine{\A\A\T + \rho L_fP_K(\mC)}}$, $\kappa_{\A}  = \frac{\os^2(\A)}{\mins^2(\A)}$, $\kappa_{\A_{\rho}} = \frac{\os^2(\A)}{\mine{\A\A\T + \rho L_f\mC}}$, $\kappa_{\A'_{\rho}} = \frac{\os^2(\A)}{\mine{\A\A\T + \rho L_fP_K(\mC)}}$.
        \item [1] When only one oracle complexity is provided, it indicates that the oracle complexities of $\mathcal{A}$ and $\mathcal{B}$ are the same.
        \item [2] \textbf{``Algorithm A-Algorithm B'' indicates that Algorithm A utilizes Algorithm B as its subproblem solver.}
        \item [3] $\tilde{\mO}$ hides a logarithmic factor that depends on the problem parameters; for further details, please refer to \cref{iD2A_complexity_convex}.
      \item [4] When $\rho>0$, it indicates that $\rho = \rho^*(C) = \frac{\max_{i \in \sI}\frac{\os^2(A_i)}{\mu_i} +\frac{L_{h^*}}{n}}{\ove{C}}$ for iD2A and $\rho = \rho^*\pa{P_K(C)} = \frac{\max_{i \in \sI}\frac{\os^2(A_i)}{\mu_i} +\frac{L_{h^*}}{n}}{\ove{P_K(C)}}$ for MiD2A.
    \end{tablenotes}
  \end{threeparttable}
\end{table*}

\begin{table*}[h]
  \caption{Continued from \cref{tab:complexity_h_full}.}
  \renewcommand{\arraystretch}{0.5}
  \centering
  \tabcolsep=-1.5pt
  \scriptsize
  \begin{threeparttable}[b]
    \begin{tabularx}{\textwidth}{>{\centering\arraybackslash}p{3cm}ccC}
      \toprule
                                                                            & \makecell{Additional                                                                                                                                                                                                                                                                                                                                 \\ assumptions}                                                                                                       & Communication complexity                                                                                                         & Oracle complexity\tnote{1} \\
      \midrule
      \multicolumn{4}{c}{\textbf{Case 2: \cref{G,convex,strong_duality,hs_smooth}, $g_i = 0$ and $A_i$ has full row rank, $i \in  \sI$}}                                                                                                                                                                                                                                                                                           \\
      \midrule
      (80) in \cite{nedic2018improved}                                      & $A_i = \I$, \textcircled{2}                                                         & $\bO{\sqrt{\kappa_f\kappa_C}\log \pa{\frac{1}{\epsilon}}}$                                                                            & $\bO{\sqrt{\kappa_f\kappa_C}\log \pa{\frac{1}{\epsilon}}}$                                                             \\
      \midrule
      DCDA \cite{alghunaim2019proximal}                                     & \textcircled{2}                                                                     & $\mO\lt(\max\lt(\kappa_f\kappa_{\A}, \kappa_C\rt) \log \pa{\frac{1}{\epsilon}}\rt)$                                                   & $\mO\lt(\max\lt(\kappa_f\kappa_{\A}, \kappa_C\rt) \log \pa{\frac{1}{\epsilon}}\rt)$                                    \\
      \midrule
      \rowcolor{bgcolor}
      \makecell[c]{MiD2A-Algorithm 1 \cite{salim2022optimal}, \\ $\rho = 0$} & \textcircled{2}                  & $\bO{\sqrt{\frac{c_1}{c_2}}\sqrt{\kappa_f\kappa_{\A}\kappa_C} \log \pa{\frac{1}{\epsilon}}}$ & $\mathcal{A}$: $\tilde{\mO}\pa{\sqrt{\frac{c_1}{c_2}}\kappa_f\sqrt{\kappa_{\A}}\log \pa{\frac{1}{\epsilon}}}$, $\mathcal{B}$:  $\tilde{\mO}\pa{\sqrt{\frac{c_1}{c_2}}\kappa_f\kappa_{\A}\log \pa{\frac{1}{\epsilon}}}$ \\
      \midrule
      \rowcolor{bgcolor}
      MiD2A-APDG, $\rho>0$                                               & \textcircled{1}                                                                     & $\tilde{\mO}\pa{\sqrt{\kappa_C}\max\pa{\sqrt{\kappa_{f}\kappa_{\A}}, \kappa_{\A}} \log \pa{\frac{1}{\epsilon}}}$                      & $\tilde{\mO}\pa{\max\pa{\sqrt{\kappa_{f}\kappa_{\A}}, \kappa_{\A}} \log \pa{\frac{1}{\epsilon}}}$                      \\
      \midrule
      \rowcolor{bgcolor}
      MiD2A-iDAPG, $\rho = 0$ &       \textcircled{1}           & $\bO{\sqrt{\frac{c_1}{c_2}}\sqrt{\kappa_{f}\kappa_{\A}\kappa_C} \log \pa{\frac{1}{\epsilon}}}$ & $\mathcal{A}$: $\tilde{\mO}\pa{\kappa_{f}^{1.5}\kappa_{\A} \log \pa{\frac{1}{\epsilon}}}$, $\mathcal{B}$: $\tilde{\mO}\pa{\kappa_{f}\kappa_{\A} \log \pa{\frac{1}{\epsilon}}}$ \\      
      \midrule
      \rowcolor{bgcolor}
      MiD2A-iDAPG, $\rho>0$ & \textcircled{1}                  & $\tilde{\mO}\pa{\sqrt{\kappa_{f}\kappa_{\A}\kappa_C} \log \pa{\frac{1}{\epsilon}}}$ & $\mathcal{A}$: $\tilde{\mO}\pa{\kappa_{f}\sqrt{\kappa_{\A}} \log \pa{\frac{1}{\epsilon}}}$, $\mathcal{B}$: $\tilde{\mO}\pa{\sqrt{\kappa_{f}\kappa_{\A}} \log \pa{\frac{1}{\epsilon}}}$ \\

      \midrule
      \rowcolor{bgcolor}
      MiD2A-APDG, $\rho=0$                                               &                                                                                     & $\bO{\sqrt{\frac{1}{c_2}}\sqrt{\kappa_C}\max\pa{\sqrt{\kappa_{pd'}}, \sqrt{c_1\kappa_f\kappa_{\A}}} \log \pa{\frac{1}{\epsilon}}}$    & \makecell[c]{$\tilde{\mO}\Big(\sqrt{\frac{1}{c_2}}\max\pa{\sqrt{\kappa_{pd'}}, \sqrt{c_1\kappa_f\kappa_{\A}}}\cdot$ \\ $\max\pa{\sqrt{\kappa_{pd'}}, \sqrt{\kappa_{f}\kappa_{\A}}, \kappa_{\A}}\Big)$} \\
      \midrule
      \rowcolor{bgcolor}
      MiD2A-APDG, $\rho>0$                                               &                                                                                     & $\tilde{\mO}\pa{\sqrt{\kappa_C}\max\pa{\sqrt{\kappa_{pd'}}, \sqrt{\kappa_{f}\kappa_{\A}}, \kappa_{\A}} \log \pa{\frac{1}{\epsilon}}}$ & $\tilde{\mO}\pa{\max\pa{\sqrt{\kappa_{pd'}}, \sqrt{\kappa_{f}\kappa_{\A}}, \kappa_{\A}} \log \pa{\frac{1}{\epsilon}}}$ \\
      \midrule
      \rowcolor{bgcolor}
      MiD2A-iDAPG, $\rho = 0$ &                  & $\bO{\sqrt{\frac{1}{c_2}}\sqrt{\kappa_C}\max\pa{\sqrt{\kappa_{pd'}}, \sqrt{c_1\kappa_f\kappa_{\A}}} \log \pa{\frac{1}{\epsilon}}}$ & \makecell{$\mathcal{A}$: $\tilde{\mO}\pa{\sqrt{\kappa_f}\max\pa{\kappa_{pd'}, \kappa_{f}\kappa_{\A}} \log \pa{\frac{1}{\epsilon}}}$ \\ $\mathcal{B}$: $\tilde{\mO}\pa{\max\pa{\kappa_{pd'}, \kappa_{f}\kappa_{\A}} \log \pa{\frac{1}{\epsilon}}}$} \\
      \midrule
      \rowcolor{bgcolor}
      MiD2A-iDAPG, $\rho>0$ &                  & $\tilde{\mO}\pa{\sqrt{\kappa_C}\max\pa{\sqrt{\kappa_{pd'}}, \sqrt{\kappa_{f}\kappa_{\A}}} \log \pa{\frac{1}{\epsilon}}}$ & \makecell{$\mathcal{A}$: $\tilde{\mO}\pa{\sqrt{\kappa_f}\max\pa{\sqrt{\kappa_{pd'}}, \sqrt{\kappa_{f}\kappa_{\A}}} \log \pa{\frac{1}{\epsilon}}}$ \\ $\mathcal{B}$: $\tilde{\mO}\pa{\max\pa{\sqrt{\kappa_{pd'}}, \sqrt{\kappa_{f}\kappa_{\A}}} \log \pa{\frac{1}{\epsilon}}}$} \\
      \midrule
      \multicolumn{4}{c}{\textbf{Case 3: \cref{G,convex,strong_duality,hs_smooth}, $g_i = 0$ and $A = [A_1, \cdots, A_n]$ has full row rank}}                                                                                                                                                                                                                                                                                      \\
      \midrule
      DCPA \cite{alghunaim2021dual}                                         & \textcircled{1}                                                                     & $\mO\lt(\kappa_f\kappa_{\A_{\rho}}\kappa_C \log \pa{\frac{1}{\epsilon}}\rt)$                                                          & $\mO\lt(\kappa_f\kappa_{\A_{\rho}}\kappa_C \log \pa{\frac{1}{\epsilon}}\rt)$                                           \\
      \midrule
      NPGA \cite{li2024npga}                                                & \textcircled{1}                                                                     & $\mO\lt(\max\lt(\kappa_f\kappa_{\A_{\rho}}, \kappa_C\rt) \log \pa{\frac{1}{\epsilon}}\rt)$                                            & $\mO\lt(\max\lt(\kappa_f\kappa_{\A_{\rho}}, \kappa_C\rt) \log \pa{\frac{1}{\epsilon}}\rt)$                             \\
      \midrule
      \rowcolor{bgcolor}
      iD2A-iDAPG, $\rho>0$ & \textcircled{1}                 & $\tilde{\mO}\pa{\sqrt{\kappa_f\kappa_{\A_{\rho}\kappa_C}} \log \pa{\frac{1}{\epsilon}}}$ & $\mathcal{A}$: $\tilde{\mO}\pa{\kappa_f\sqrt{\kappa_{\A_{\rho}}\kappa_C} \log \pa{\frac{1}{\epsilon}}}$, $\mathcal{B}$: $\tilde{\mO}\pa{\sqrt{\kappa_f\kappa_{\A_{\rho}}\kappa_C} \log \pa{\frac{1}{\epsilon}}}$ \\
      \midrule
      \rowcolor{bgcolor}
      MiD2A-iDAPG, $\rho>0$ & \textcircled{1}                 & $\tilde{\mO}\pa{\sqrt{\kappa_f\kappa_{\A'_{\rho}}\kappa_C} \log \pa{\frac{1}{\epsilon}}}$ & $\mathcal{A}$: $\tilde{\mO}\pa{\kappa_f\sqrt{\kappa_{\A'_{\rho}}} \log \pa{\frac{1}{\epsilon}}}$, $\mathcal{B}$: $\tilde{\mO}\pa{\sqrt{\kappa_f\kappa_{\A'_{\rho}}} \log \pa{\frac{1}{\epsilon}}}$ \\
      \midrule
      \rowcolor{bgcolor}
      iD2A-iDAPG, $\rho>0$ &                  & $\tilde{\mO}\pa{\sqrt{\kappa_C}\max\pa{\sqrt{\kappa_{pd^2}}, \sqrt{\kappa_f\kappa_{\A_{\rho}}}} \log \pa{\frac{1}{\epsilon}}}$ & \makecell{$\mathcal{A}$: $\tilde{\mO}\pa{\sqrt{\kappa_f\kappa_C}\max\pa{\sqrt{\kappa_{pd^2}}, \sqrt{\kappa_f\kappa_{\A_{\rho}}}} \log \pa{\frac{1}{\epsilon}}}$ \\ $\mathcal{B}$: $\tilde{\mO}\pa{\sqrt{\kappa_C}\max\pa{\sqrt{\kappa_{pd^2}}, \sqrt{\kappa_f\kappa_{\A_{\rho}}}} \log \pa{\frac{1}{\epsilon}}}$} \\
      \midrule
      \rowcolor{bgcolor}
      MiD2A-iDAPG, $\rho>0$ &                  & $\tilde{\mO}\pa{\sqrt{\kappa_C}\max\pa{\sqrt{\kappa_{pd^3}
      }, \sqrt{\kappa_f\kappa_{\A'_{\rho}}}} \log \pa{\frac{1}{\epsilon}}}$ & \makecell{$\mathcal{A}$: $\tilde{\mO}\pa{\sqrt{\kappa_f}\max\pa{\sqrt{\kappa_{pd^3}
      }, \sqrt{\kappa_f\kappa_{\A'_{\rho}}}} \log \pa{\frac{1}{\epsilon}}}$                                                                                                                                                                                                                                                                                                                                                        \\ $\mathcal{B}$: $\tilde{\mO}\pa{\max\pa{\sqrt{\kappa_{pd^3}
      }, \sqrt{\kappa_f\kappa_{\A'_{\rho}}}} \log \pa{\frac{1}{\epsilon}}}$}                                                                                                                                                                                                                                                                                                                                                       \\
      \bottomrule
    \end{tabularx}
    \begin{tablenotes}
      \item See notes in \cref{tab:complexity_h_full}.
    \end{tablenotes}
  \end{threeparttable}
\end{table*}

\end{document}